\newcommand{\dt}{\partial_t}
\newcommand{\dx}{\partial_x}
\newcommand{\eps}{\varepsilon}
\newcommand{\av}[1]{\langle#1\rangle}
\newcommand{\jump}[1]{\llbracket#1\rrbracket}
\newcommand{\abs}[1]{\vert#1\vert}
\newcommand{\zetap}{\zeta_+}
\newcommand{\tzeta}{\underline{\zeta}}
\newcommand{\tzetap}{{\underline{\zeta}{}_+}}
\newcommand{\tzetam}{\underline{\zeta}{}_-}
\newcommand{\tzetapm}{\underline{\zeta}{}_\pm}
\newcommand{\dtzetapm}{\dot{\underline{\zeta}}{}_\pm}
\newcommand{\dtzetap}{\dot{\underline{\zeta}}{}_+}
\newcommand{\dtzetam}{\dot{\underline{\zeta}}{}_-}
\newcommand{\uh}{\underline{h}}
\newcommand{\dzetapm}{\dot{\zeta}_\pm}
\newcommand{\zetam}{\zeta_-}
\newcommand{\zetapm}{\zeta_\pm}
\newcommand{\cE}{{\mathcal E}}
\newcommand{\dsp}{\displaystyle}
\newcommand{\mfsw}{{\mathfrak f}_{\rm sw}}
\newcommand{\RR}{{\mathbb R}}
\newtheorem{theorem}{Theorem}[section] 
\newtheorem{proposition}{Proposition}[section]
 \theoremstyle{remark}
\newtheorem{remark}{Remark}[section]
\title[A second order scheme for wave-structures interactions]{A second order scheme for waves interacting with floating objects in the Boussinesq regime}
\begin{document}

\maketitle

\begin{abstract}
The goal of this work is to study waves interacting with partially immersed objects allowed to move freely in the vertical direction, and in a regime in which the propagation of the waves is described by the one dimensional Boussinesq-Abbott system. We show that the problem can be reduced to an extended transmission problem for this Boussinesq system, in which the transmission conditions between the two connected components of the domain are determined through the resolution of coupled forced ODEs in time. The interest of this extended formulation is that the forcing terms are easy to compute numerically. Based on this formulation, we propose a second order scheme that involves a generalization of the MacCormack scheme with nonlocal flux and a source term, which is coupled to a second order Heun scheme for the ODEs. In order to validate this scheme, several explicit solutions for this wave-structure interaction problem are derived. As a byproduct, our method provides a second order scheme for the generation of waves at the entrance of the numerical domain for the Boussinesq-Abbott system.
\end{abstract}

\section{Introduction}

\subsection{Presentation of the problem}

While the first studies of the interactions of waves with floating structures go back at least to John's paper \cite{John1}, or the phenomenological integro-differential equation derived by Cummins to describe the linear motion of floating structures \cite{Cummins},  this research field became increasingly active in recent years. A first reason for this renewed interest  is related to  the development of renewable marine energies as one of the tools for energy transition. Indeed, several devices of offshore wind-turbines and wave-energy convertors involve partially immersed structures \cite{Babarit}. 

A second reason for the recent mathematical activity on wave-structure interactions is that this has been made technically feasible thanks to the recent progresses on the mathematical understanding of the propagation of water waves. The initial value problem in domains without boundaries (${\mathcal R}^d$ or ${\mathcal T}^d$) is now well understood for the full water waves (also called free-surface Euler) equations, as well as for asymptotic models in shallow water (such as the nonlinear shallow water equations, the Boussinesq systems, the Serre-Green-Naghdi equations). Recently, the initial value problem has also been studied in domains with a boundary. When the fluid domain is delimited by vertical sidewalls, the water waves equations have been studied in \cite{ABZ}; in the case of nonvertical sidewalls, this problem has been considered in \cite{Poyferre,MingWang} for the water waves equations, and in  \cite{LannesMetivier} for the shallow water and Green-Naghdi equations. The initial boundary value problem, in which one imposes initial and boundary datas, has also been investigated for the Boussinesq equations \cite{BLM,LannesWeynans}. These works make it more  realistic to address the issues raised by the presence of a partially immersed object. 

From the numerical point of view, efficient numerical codes based on shallow water models have been developed recently and can be used to address realistic submersion issues (see for instance \cite{Funwave,Uhaina}); here also, it is now a reasonable prospect to address the  specific difficulties raised by wave-struture interactions.

The present paper is a contribution to the theoretical and numerical understanding of these interactions, inasmuch as it provides a precise description of the motion of a partially immersed object allowed to move freely in the vertical direction under the action of waves described by a nonlinear dispersive model (the standard Boussinesq-Abbott system). It has to be considered as a partial (affirmative) answer to the wider question:
can the efficient modelling of waves based on shallow water models be extended to allow the presence of floating structures. If this happens to be true, the gain in computational time would allow to investigate the behavior of many floating structures (the so-called farms of wave-energy convertors or offshore wind turbine), as well as their impact on the wave fields, which can have significant consequences in coastal regions. Answering such question is out of reach for CFD methods that can be used to describe the behavior of one wave-energy convertor, and also, to a lower extent, for potential methods (see for instance \cite{EL,GKC}). On the other hand, the linear methods based on Cummins' equation used in commercial softwares such as Wamit neglect the nonlinear effects that can be important \cite{PGR}, especially in shallow water, and are unable to provide a precise description of the impact of a wave-farm on the wave-field.

The presence of a floating structure in a shallow water model can be taken into account following the approach proposed in \cite{Lannes_float} where the horizontal plane is decomposed into two regions: the interior region (below the floating object), and the exterior region (below the free surface waves). In the exterior regions, the standard (depth integrated) shallow water model is used, while in the interior region, an additional pressure term is present. This pressure term corresponds to the pressure exerted by the fluid on the object (and which eventally makes it move through Newton's equations), and can be understood as the Lagrange multiplier associated with the constraint that, under the object, the surface elevation of the waves is constrained as it must by definition coincide with the bottom of the object. It is possible to relax this constraint by approximating the pressure term by a pseudo-compressible relaxation; one can then use the same kind of asymptotic preserving schemes as for the low-Mach limit in compressible gases. This approach has been used in the present context in \cite{GPSW1,GPSW2}, and is also relevant for other instances of partially congested flows \cite{PerrinSaleh,DalibardPerrin,BianchiniPerrin}. In this paper, we rather consider the original (non relaxed) problem, which requires to understand precisely the coupling between the interior and exterior regions. 

It turns out that this wave interaction problem can be reduced to an initial boundary value problem for the wave model in the exterior region, with non standard boundary (or transmission) conditions. A  prerequisite is therefore to be able to handle, both theoretically and numerically, non homogeneous initial boundary value problems for the wave model under consideration. This preliminary step remains an open problem in many cases but this approach has however been implemented with success in some configurations.

The nonlinear shallow water equations in horizontal dimension $d=1$ is the simplest model describing the propagation of nonlinear waves in shallow water and its robustness also contributes to its success. Initial boundary value problems, possibly with a free boudary, for a wide class of hyperbolic systems has been considered in \cite{IguchiLannes} where the corresponding wave-structure interaction problem is also addressed theoretically. Numerically, this was done in \cite{Lannes_float} and in \cite{BEER} with a higher order scheme, in the case of an object allowed to move only in the vertical direction and with vertical walls (so that the contact points between the free surface and the object have constant abscissa), as well as in \cite{Bocchi3} where a wave-energy device (the so-called oscillating water column) is simulated using this approach; controlability issues were also addressed in \cite{SuTucsnak}. The more complex case of an object freely floating and with nonvertical walls (and therefore nontrivial dynamics for the contact points) has been solved theoretically in \cite{IguchiLannes}, and numerically in \cite{Haidar} using ALE methods to treat the evolution of the contact points. With vertical walls, a variant of this wave-structure interaction problem for the viscous nonlinear shallow water equations was also considered in \cite{Maity} and an extension to the case of horizontal dimension $d=2$ with radial symmetry has been considered theoretically in \cite{Bocchi} and the so-called decay test (or return to equilibrium) invested in the same configuration under an additional assumption of linearity in \cite{Bocchi2}. Let us also mention \cite{Parisot} where the dynamics of trapped air pockets are studied.

The principal drawback of the nonlinear shallow water equations is that they neglect the dispersive effects that play an important role is some important situations (it allows for instance the existence of solitary waves). The most simple models that generalize the nonlinear shallow water equations by adding dispersive terms are the Boussinesq equations (see \cite{LannesModeling} for a recent review on shallow water models). Contrary to the hyperbolic case mentioned above, there is no general theory for initial boundary value problems associated with nonlinear dispersive systems and this why several approximations have been used to bypass this issue. In \cite{BEER}, wave-structure interactions using a Boussinesq model was used, but the issue at the boundary was avoided by using the (dispersionless) nonlinear shallow water equations in a small region around the object;  in \cite{MIS} the behavior at the boundary was approximated at second order using Bessel expansions and matched asymptotics;  in \cite{Karambas}, Boussinesq type equations where computed in the whole domain, neglecting the singularities of the surface elevation and of the discharge at the contact line, while the presence of the object is taken into account by adding an additional pressure term in the interior region.  There are also approximate methods based on sponge layers and artificial source terms which are often used to generate waves at the entrance of the numerical domain \cite{WeiKirbySinha}. Such methods are far too rough to be used in the present case, where a precise description of the waves at the contact points is needed; indeed, as shown in \cite{BLM}, the behavior at the contact points can be quite complex and exhibit dispersive boundary layers. This is why a new method to handle nonhomogeneous initial boundary value problems for the Boussinesq equations was proposed in \cite{LannesWeynans} and numerically implemented with an order $1$ scheme. The approach used in the present paper allows us to treat the issues related to the initial boundary value problem for Boussinesq-type equations without any approximation; a byproduct of independent interest of the present paper is that it furnishes a second order method for the generation of waves at the numerical boundary of the fluid domain for the Boussinesq equations, hereby complementing the first order generation scheme of \cite{LannesWeynans}.

The configuration considered in this paper is the same as in \cite{BeckLannes}, where waves governed by a Boussinesq system and interacting with a floating object allowed to move only in the vertical direction are considered. It is in particular shown that the problem can be reduced to a transmission problem for the Boussinesq system cast on the two connected components of the exterior regions. The transmission conditions involve the vertical displacement $\delta$ of the object and the mean horizontal discharge $\av{q_{\rm i}}$ under the object (both being functions of time only). These two quantities are in turn given by the resolution of two second order forced ODEs in time, with forcing terms coming from the exterior wave-field.  

To be more precise, these forcing terms involve the traces (and their time derivatives) of the surface elevation at the contact points. In the nondispersive case, that is, if the Boussinesq equations are replaced by the hyperbolic nonlinear shallow water equations for the wave model, these traces can be computed using the transport equations satisfied by the Riemann invariants. In the presence of dispersion, this is no longer possible and the numerical computation of these traces is a serious issue. 

After remarking that these traces also solve a forced ODE, we propose to address this issue by working with a new extended formulation of the equations. This extended formulation has the same structure as the original one but with these two additional forced scalar ODEs that are used to determine the traces of the surface elevation and therefore somehow play  in the dispersive case the role played by the Riemann invariants in the hyperbolic setting. In this extended formulation, the forcing terms in the ODEs are "interior terms" that do not raise any computational difficulty. 

This extended formulation of the transmission problem is then transformed into an initial value problem coupled with forced ODEs. We implement on this formulation a second order scheme that couples a MacCormack predictor corrector scheme (generalized to handle nonlocal fluxes and a source term)  for the computation of the waves, and a second order Heun scheme for the computation of the forced ODEs. We also exhibit several exact explicit solutions that we use to study the convergence of our code.

\subsection{Organization of the paper}

In Section \ref{sectaugmented}, we derive the formulation of the problem our numerical scheme is based on: we first recall in \S \ref{sectreduc} the reduction of \cite{BeckLannes} to a transmission problem for the Boussinesq equation on the two connected components of the exterior region, and then show in \S \ref{secttraces} that the traces of the surface elevation at the contact points satisfy a forced second order ODE that we use to write the augmented formulation of the transmission problem in \S \ref{sectaugmform}; this transmission problem is finally rewritten as an initial boundary value problem in \S \ref{secttransfIVP}.

The numerical schemes are presented in Section \ref{sectschemes}. The initial value problem obtained in the previous section is a set of two conservation equations with nonlocal flux and an exponentially decaying source term whose coefficient is found by solving a set of forced second order ODEs. We propose two numerical schemes based on an abstract formulation of these equations. The first one, described in \S \ref{sectLF}, is of first order and is an adaptation of the Lax-Friedrichs scheme to the present context. The second one, studied in \S \ref{sectMC}, is of second order. It is based on the MacCormack predictor-corrector scheme for the two conservation PDEs (with adaptations to handle the nonlocal flux and the source term), and on  a Heun scheme for the ODE part.

Numerical simulation are then presented in Section \ref{sectnumsim}. We investigate several configurations exploring different aspects of the coupling between the Boussinesq equations and the forced ODEs used in the transmission conditions. Wave generation is considered in \S \ref{sectWG}, and is of independent interest as it provides a way to generate waves at the entrance of the numerical domain for the Boussesq equations. The return to equilibrium test in which an object oscillates vertically after being released from an out of equilibrium position is studied in \S \ref{sectRTE}; in the linear case, an explicit solution is exhibited and computed via Laplace transforms, and this solution is used to assess the precision of our scheme. Interactions of waves with a fixed object are then investigated in  \S \ref{sectfixed}; here also, an exact solution is derived in the linear case and used for validation. The most general configuration of waves interacting with an object allowed to move freely in the vertical direction is then considered in \S \ref{sectfreely}.

Throughout this article, we work with an abstract and concise formulation of the equations. The precise equations, with the expressions of the various coefficients involved, is postponed to Appendix \ref{appcoeff}.

\subsection{Notation}

- The horizontal axis ${\mathbb R}$ is decomposed throughout this paper into an {\it interior region} ${\mathcal I}=(-\ell,\ell)$ and an {\it exterior region} ${\mathcal E}={\mathcal E}_+\cup {\mathcal E}_-$ with ${\mathcal E}_-=(-\infty,-\ell)$ and ${\mathcal E}_+= (\ell,\infty)$, and two {\it contact points} ${x=\pm \ell}$. \\
- For any function $f\in C(\overline{\cE})$, we denote
$$
f_\pm=f_{\vert_{x=\pm\ell}},\qquad
\jump{f} =f_+-f_-
\quad\mbox{ and }\quad
\av{f}=\frac{1}{2}\big(f_++f_-\big).
$$
- If $f\in C^1([0,T])$, we sometimes use the notation $\dot f=\frac{d}{dt} f$.\\
- We denote by $\mfsw$ the momentum flux associated with the nonlinear shallow water equations,
\begin{equation}\label{defmfsw}
\mfsw=\left( \eps \frac{q^2}{h}+\frac{h^2-1}{2\eps} \right).
\end{equation}

\section{An augmented formulation of the wave-structure interaction equations}\label{sectaugmented}

The goal of this section is to derive the augmented formulation of the wave-structure equations that we shall use in Section \ref{sectschemes} to propose numerical schemes. We first sketch in \S \ref{sectreduc} the main steps of the analysis of \cite{BeckLannes} that led to a formulation of the problem as a transmission problem between the two connected components of the fluid domain, and with transmission conditions determined through the resolution of an ODE forced by a source term involving  the traces at the contact points of the surface elevation and of their second order time derivative. We then remark in \S \ref{secttraces} that these traces solve themselves a second order ODE, but which is forced by a source term which is easier to compute. This observation is the key ingredient that allows us to derive in \S \ref{sectaugmform} an augmented formulation. It has the same structure as the formulation derived in \S \ref{sectreduc}, namely, it is a transmission problem coupled with a forced ODE. The crucial difference is that this ODE does no longer require the computation of the traces of the surface elevation at the contact points and that it can easily be computed numerically. Finally, we show  in \S \ref{secttransfIVP} that this augmented transmission problem can be rewritten as an initial value problem, which is the structure the numerical schemes of Section \ref{sectschemes} are based on.

\subsection{Reduction to a transmission problem coupled with scalar ODEs}\label{sectreduc}

We remind here the main steps of the derivation of the equations describing the interactions of a partially immersed object with one-dimensional waves in a regime where these waves can correctly be described by the Boussinesq-Abbott equations (see \cite{LannesModeling}); the object is assumed to have vertical sidewalls and can be either fixed, in forced vertical motion, or allowed to float freely in the vertical direction under the action of the waves. 

In dimensionless variables, the equations involve two coefficients $\eps$ and $\mu$, respectively called nonlinearity and shallowness parameters, and that are defined as
$$
\eps=\frac{\mbox{typical amplitude of the waves}}{\mbox{typical depth}}
\quad\mbox{ and }\quad
\mu=\Big(\frac{\mbox{typical horizontal scale}}{\mbox{typical depth}}\Big)^2;
$$
in the {\it weakly nonlinear shallow water regime} in which the Boussinesq-Abbott equations are known to provide a good approximation of the motion of the waves, one has
$$
\mu \ll 1 \quad\mbox{ and }\quad \eps=O(\mu);
$$
these conditions are assumed throughout this article. For the sake of conciseness, we also introduce the parameter $\kappa$ as
$$
\kappa=\big( \frac{\mu}{3} \big)^{1/2};
$$
this parameter plays an important role as it measures the size of the dispersive boundary layers that appear in the analysis of mixed initial boundary-value problems for the Boussinesq equations, which are a dispersive perturbation of an hyperbolic system \cite{BLM}.

\medbreak

As displayed in Figure \ref{fig-floating}, in dimensionless coordinates, the surface of the fluid is parametrized at time $t$ by the function $x\in\RR \mapsto \eps \zeta(t,x)$, and the horizontal discharge (the vertical integral of the horizontal component of the velocity field) at time $t$ and position $x$ is denoted $q(t,x)$. We also sometimes denote by $h$ the water depth, $h=1+\eps\zeta$. Finally, we denote by $\underline{P}(t,x)$ the pressure at the surface of the fluid, namely, $\underline{P}=P(t,x,\eps\zeta(t,x))$ if $P$ denotes the pressure field in the fluid.
 
 Regarding the solid object, we denote by $\pm\ell$ the position of its vertical sidewalls and by $\eps\zeta_{\rm w}$ the parametrization on $(-\ell,\ell)$ of its bottom (the subscript "w" stands for "wetted part"); we also denote by $\eps\delta(t)$ the vertical deviation of the object from its equilibrium position, and by $h_{\rm eq}$ the water depth at rest. These quantities are related through
 $$
 \zeta_{\rm w}(t,x)=\delta(t)+\frac{1}{\eps}\big(h_{\rm eq}(x)-1 \big).
 $$
 
 \noindent
 {\color{red}{
 {\bf N.B.} For the sake of simplicity, we assume throughout this article that 
the center of mass is located at $\{x=0 \}$
and $h_{\rm eq}(x)$ is an even function. \\
 }}
 \begin{figure}[h!]
        \centering \includegraphics[width=8cm]{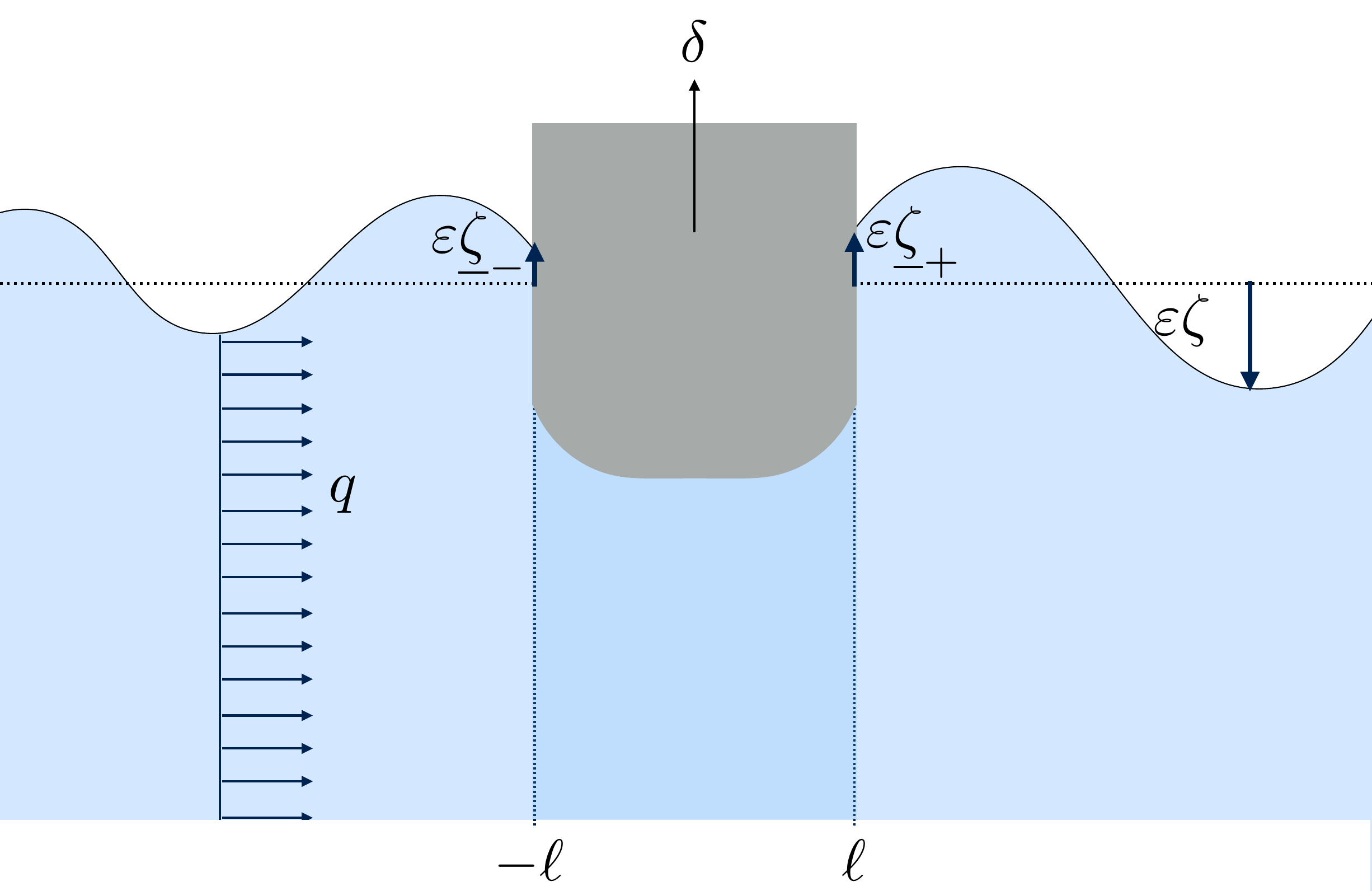}
        \caption{The floating object}
        \label{fig-floating}
\end{figure}

The Boussinesq-Abbott equations for the motion of the waves are given for $t>0$, $x\in \RR$ by
\begin{equation}\label{BoussinesqAbbott}
\begin{cases}
\dt \zeta+\dx q=0,\\
(1-\kappa^2\dx^2)\dt q +\eps \dx \big( \frac{1}{h}q^2\big)+h\dx\zeta=-\frac{1}{\eps} h \dx \underline{P}
\end{cases}
\end{equation}
(with $h=1+\eps\zeta$). We now have to distinguish between the {\it exterior} region $\cE=(-\infty,-\ell)\cup (\ell,\infty)$ where the surface of the water is in contact with the air and the {\it interior} region ${\mathcal I}=(-\ell,\ell)$ where it is in contact with the object,
\begin{itemize}
\item In the {\it{exterior}} region $\cE$, the surface elevation $\zeta$ is free, but the surface pressure $\underline{P}$ is constrained, assumed to be equal to the (constant) atmospheric pressure $P_{\rm atm}$,
$$
\underline{P}(t,x)=P_{\rm atm} \quad \mbox{ for } t>0, \quad x\in \cE;
$$
the right-hand side in the second equation of \eqref{BoussinesqAbbott} therefore vanishes.
\item In the {\it{interior}} region $\cE$, it is the reverse: the surface elevation is constrained because it has to coincide with the bottom of the object,
$$
\zeta(t,x)=\zeta_{\rm w}(t,x) \quad \mbox{ for } t>0, \quad x\in {\mathcal I},
$$
but there is no constraint on the surface pressure $\underline{P}$ which, under the general approach of \cite{Lannes_float}, can be understood as the Lagrange multiplier associated with the constraint on the surface elevation. Plugging the constraint equation in the first equation of \eqref{BoussinesqAbbott} one directly gets that
$$
q(t,x)=-x\dot \delta+\av{q_{\rm i}}(t),
$$
where $\av{q_{\rm i}}$ is a time dependent function corresponding to the average discharge over the interior region. Using this relation and applying $\dx$ to the second equation in \eqref{BoussinesqAbbott} provides an elliptic equation for $\underline{P}$,
$$
-\dx \big( \frac{1}{\eps} h_{\rm w} \dx \underline{P}\big)=-\ddot\delta+\dx\big[h_{\rm w}\dx\zeta_{\rm w}+\eps\dx\big(\frac{1}{h_{\rm w}}(-x\dot\delta+\av{q_{\rm i}}\big)^2)\big],
$$
for $ x\in (-\ell,\ell)$ and
with $h_{\rm w}=h_{\rm eq}+\eps\delta$. If we know the boundary values of $\underline{P}$ at $x=-\ell+0$ and $x=\ell-0$, this elliptic equation can be solved and it provides an expression for $\underline{P}$ in terms of $h_{\rm eq}$, $\delta$, $\av{q_{\rm i}}$ and of this boundary data. Using this expression in the second equation of \eqref{BoussinesqAbbott} then provides an expression for $\frac{d}{dt}\av{q_{\rm i}}$ in terms of the same quantities.
\end{itemize}
We also need coupling conditions at the contact points $x=\mp\ell$ between the exterior and interior region. There are two of them,
\begin{itemize}
\item Continuity of the horizontal discharge. Taking into account the expression of the discharge derived above in the interior region, this condition yields
\begin{equation}\label{BCdischarge}
q(t,-\ell-0)=\ell\dot\delta+\av{q_{\rm i}}(t)\quad\mbox{ and }\quad q(t,\ell+0)=-\ell\dot\delta+\av{q_{\rm i}}(t).
\end{equation}
\item Conservation of the total energy. Imposing conservation of the total (i.e., fluid+solid) energy classically provides the boundary data needed to solve the elliptic equation derived for the surface pressure in the interior region \cite{Maity,Bocchi,BLM}. We refer to \cite{BeckLannes} for the derivation of these boundary data in the present context, but do not provide it here explicitly for the sake of conciseness.
\end{itemize}

To summarize, we have the standard Boussinesq-Abbott equation in the exterior region, with boundary condition on the discharge $q$ at $\mp\ell$ that are given in terms of two functions of time, namely, $\av{q_{\rm i}}$ and $\delta$. As said above, the fact that the elliptic equation for the pressure in the interior region has been solved provides an evolution equation for $\av{q_{\rm i}}$; the last thing to do is therefore to determine $\delta$. If the object is fixed or in forced motion then $\delta$ is given; otherwise, it is of course given by Newton's equation. The three cases can be considered simultaneously by allowing an external force to be applied to the solid (if the solid is fixed or in forced motion, this external force $F_{\rm ext}$ represents the vertical force exerted on the solid to maintain it fixed or with the desired motion). The outcome of this analysis, as shown\footnote{The presence of the external force is not taken into account in that reference. It is however straightforward to add it in Newton's equation; note that in the present dimensionless setting, the force has been nondimensionalized by $2\ell\rho g$.} in Theorem 3.1 of \cite{BeckLannes} is that the wave-structure interaction problem under consideration can be reduced to a transmission problem. Using the notations
$$
\av{f} =\frac{1}{2}\big( f(\ell)-f(-\ell)\big) \quad\mbox{ and }\quad \jump{f} =f(\ell)-f(-\ell)
$$
for all $f\in C((-\infty,-\ell]\cup[\ell,\infty))$, this transmission problem can be written
\begin{equation}\label{transm1}
\begin{cases}
\dt \zeta+\dx q=0,\\
(1-\kappa^2\dx^2)\dt q +\dx \mfsw=0
\end{cases}
\quad\mbox{ for }\quad t>0, \quad x\in \cE
\end{equation}
where $\mfsw$ is the shallow water momentum flux given by \eqref{defmfsw}, and with transmission conditions across the floating object given by
\begin{equation}\label{transm2}
\av{q}=\av{q_{\rm i}}\quad \mbox{ and } \quad \jump{q}=-2\ell \dot\delta,
\end{equation}
where $\av{q_{\rm i}}$ and $\delta$ are functions of time solving
\begin{align}
\label{transm3}
\alpha(\eps\delta)\frac{d}{dt}\av{q_{\rm i}}+\eps \alpha'(\eps\delta)\dot\delta\av{q_{\rm i}}&=-\frac{1}{2\ell}\jump{\zeta+{\mathfrak G}},\\
\label{transm4}
\tau_\kappa(\eps\delta)^2\ddot\delta+\delta-\eps\beta(\eps\delta)\dot\delta^2-\eps\frac{1}{2}\alpha'(\eps\delta)\av{q_{\rm i}}^2&=\av{\zeta+{\mathfrak G}}+F_{\rm ext},
\end{align}
where ${\mathfrak G}$ is the function defined on $\cE$ by
\begin{equation}\label{mathG}
{\mathfrak G}=\eps\frac{1}{2}\frac{q^2}{h^2}-\kappa^2\frac{1}{h}\dx\dt q,
\end{equation}
and where the explicit expression of the functions $\alpha$, $\tau_\kappa$ and $\beta$, of no importance at this point of the discussion, are provided in \S \ref{appexpcoeff} of Appendix \ref{appcoeff}. {\color{red}{We just want to emphasize that $\tau_\kappa(\eps\delta)^2$, the coefficient in front of $\ddot\delta$ in \eqref{transm4}, is strictly higher than the buoyancy period associated to the mass of the solid object. This is connected to the added-mass effect. When a solid moves in a fluid, not only must it accelerate its own mass but also the mass of the fluid around it. This is what we call added-mass phenomenon. Indeed, some components of the hydrodynamic force applied on the body act as inertia in Newton's law.}}

The initial value problem corresponding to \eqref{transm1}-\eqref{transm4} is studied and solved in \cite{BeckLannes}. Its structure is that of a transmission problem coupled with a set of ODEs on $\av{q_{\rm i}}$ and $\delta$. This coupling acts in both ways: on the one hand, it is necessary to know $\av{q_{\rm i}}$ and $\delta$ in order to solve the transmission problem \eqref{transm1}-\eqref{transm2} and on the other hand, one needs to know the solution $(\zeta,q)$ of this transmission problem to compute the source term in the right-hand side of \eqref{transm3}-\eqref{transm4}. From the numerical view point, this last step is not easy to treat since one has to compute the numerical trace of $\zeta$ and $\dt\dx q$ at the contact points $x=\pm\ell$. The key ingredient we propose here to overcome this difficulty is to work with an augmented formulation of the problem, with additional functions of time involved in the system of ODEs for $\delta$ and $\av{q_{\rm i}}$, but where the computation of such traces is no longer needed. {\color{red}{Thus, the main theorem of this section is the following:
\begin{theorem}\label{propIVP-nonlocal}
Let $n>1$,  $F_{\rm ext}$ a continuous 1D function and initial conditions
\begin{equation*}\label{CI1-nonlocal}
U^{\rm in} = (\zeta^{\rm in},q^{\rm in}) \in H^n({\mathcal E}_\pm) \times H^{n+1}({\mathcal E}_\pm)
\end{equation*}
and
$$
\big(\av{q_{\rm i}}^{\rm in},\dot\delta^{\rm in},\delta^{\rm in} \big) \in \mathbb{R}^3.
$$
There exists $T>0$ such that there exists an unique solution 
$$(U, \Theta) \in C^0((0,T), H^n({\mathcal E}_\pm) \times H^{n+1}({\mathcal E}_\pm) \times \mathbb{R}^7)$$
 to the non-local conservation-law with transmission conditions and source term
\begin{equation}\label{IVP1}
\begin{cases}
\dt U+\dx \big({\mathfrak F}_\kappa(U) \big)={\mathcal S}_\pm(\Theta,(R_1\mfsw)_{\vert_{x=\pm\ell}},F_{\rm ext}) 
{\mathfrak b}(x\mp\ell)
\qquad \mbox{ in }{\mathcal E}_\pm
\\
\av{U_2}=\Theta_1 \quad \text{and} \quad \jump{U_2}=-2\ell\Theta_2.
\end{cases}
\end{equation}
coupled with the system of seven first order ODEs
\begin{equation}\label{IVP2}
\frac{d}{dt}\Theta={\mathcal G}\big(\Theta, (R_1\mfsw)_{\vert_{x=\pm\ell}},F_{\rm ext}\big),
\end{equation}
where 
\begin{itemize}
\item 
the nonlocal flux ${\mathfrak F}_\kappa(U)$ is given by
\begin{equation*}\label{deftzetaNL-nonlocal}
{\mathfrak F}_\kappa(U)=\left(U_2,R_1 \left[\mfsw \right] \right)^T
\quad \text{while} \quad \mfsw:=\eps \frac{U_2^2}{1+ \varepsilon U_1}+\frac{(1+ \varepsilon U_1)^2-1}{2\eps}
\end{equation*}
and the elliptic operator $R_1$ is defined in \eqref{defR1},
\item the field ${\mathcal G} : \mathbb{R}^{10} \to \mathbb{R}^{7}$ is a smooth function of its arguments and whose exact expression is given in Appendix \ref{appcoeff},
\item the source terms ${\mathcal S}_\pm$ are given by
\begin{equation*}\label{defS-nonlocal}
{\mathcal S}_\pm\big(\Theta, (R_1\mfsw)_{\vert_{x=\pm\ell}},F_{\rm ext}\big):={\mathcal G_1}\big(\Theta, (R_1\mfsw)_{\vert_{x=\pm\ell}},F_{\rm ext}\big)\mp \ell {\mathcal G_2}\big(\Theta, (R_1\mfsw)_{\vert_{x=\pm\ell}},F_{\rm ext}\big),
\end{equation*}
while the shape of the dispersive boundary layer ${\mathfrak b}$ is given by
\begin{equation*}\label{sourceNL}
{\mathfrak b}(x) =\big( 0,  \exp(- \frac{\abs{x}}{\kappa}) \big)^{\rm T},
\end{equation*}
\end{itemize}
and initial condition $U^{\rm in}$ and
\begin{equation*}\label{CI1-nonlocal}
\Theta^{\rm in} = 
\big(\av{q_{\rm i}}^{\rm in},\dot\delta^{\rm in},-(\dx q^{\rm in})_{\vert_{x=\ell}},-(\dx q^{\rm in})_{\vert_{x=-\ell}},\delta^{\rm in}, \zeta^{\rm in}_{\vert_{x=\ell}}, \zeta^{\rm in}_{\vert_{x=-\ell}} \big)^{\rm T} \in \mathbb{R}^7.
\end{equation*}
Moreover, for all times, one has 
$$
U = (\zeta,q)
\qquad
\Theta = \big(\av{q_{\rm i}},\dot\delta,\dot\zeta_{\vert_{x=\ell}},\dot\zeta_{\vert_{x=-\ell}},\delta,\zeta_{\vert_{x=\ell}},\zeta_{\vert_{x=-\ell}} \big)^{\rm T}.
$$
\end{theorem}
Instead of solving the coupling between Boussinesq equations \eqref{BoussinesqAbbott} and Newton's one (from which we derive \eqref{transm4}) or \eqref{transm1}-\eqref{transm4}, we will solve numerically equations \eqref{IVP1} and \eqref{IVP2} with finite volume schemes.
 
\begin{remark}\label{remODE}
The theorem deals with the most general situation to cover in a unified way all the situations considered in this article. It can be simplified in various cases, as shown in Appendix \ref{appcoeff}. For instance,
\begin{itemize}
\item When the object is freely floating, one takes $F_{\rm ext}\equiv 0$.
If moreover the initial conditions are symmetric with respect to the vertical axis, then the ODE \eqref{IVP2} can be reduced to a $6$-dimensional ODE on $\big(\dot\delta,\dot\zeta_{\vert_{x= \pm \ell}},\delta,\zeta_{\vert_{x= \pm \ell}}\big)^{\rm T}$.
It contains in particular the case of the return to equilibrium where the solid is dropped from its equilibrium position on a fluid initially at rest.
This case is discussed in Appendix \ref{simplsym}.
\item If the object is in forced motion, i.e. if $\delta\equiv\delta_{\rm forced}$ for some given function $\delta_{\rm forced}$, then the ODE \eqref{IVP2} can be reduced to a $5$-dimensional ODE on $\big(\av{q_{\rm i}},\dot\zeta_{\vert_{x= \pm \ell}},\zeta_{\vert_{x= \pm \ell}} \big)^{\rm T}$. Note that in this situation, an external force is needed to maintain the object fixed (the exact expression of this force is derived in Remark \ref{remcontrol}). This case is dealt with in Appendix \ref{appfixforced}. If moreover we assume that the solid and the fluid are symmetric with respect to the vertical axis, then the ODE \eqref{IVP2} can be reduced to a $4$-dimensional ODE on $\big(\dot\zeta_{\vert_{x= \pm \ell}},\zeta_{\vert_{x= \pm \ell}} \big)^{\rm T}$. This is the case where waves are generated the prescribed motion $\delta_{\rm forced}$.
\end{itemize}
\end{remark}
}}
\subsection{The trace equations}\label{secttraces}

The source terms in the right-hand sides of \eqref{transm3}-\eqref{transm4} involve the trace of $\zeta+{\mathfrak G}$ at $x=\pm \ell$, with ${\mathfrak G}$ given by \eqref{mathG}. Since \eqref{transm2} implies that $q_{\vert_{x=\pm\ell}}=\mp \ell \dot\delta +\av{q_{\rm i}}$ and remarking that one deduces from the first equation of \eqref{transm1} that $\dx\dt q=-\dt^2 \zeta$, we have
$$
{\mathfrak G}_{\vert_{x=\pm \ell}}=\eps\frac{1}{2}\Big(\frac{\mp \ell \dot\delta +\av{q_{\rm i}}}{1+\eps\zeta_\pm}\Big)^2+\kappa^2\frac{1}{1+\eps\zeta_\pm}\ddot\zeta_\pm,
$$
with $\zeta_{\pm}:=\zeta_{\vert_{x=\pm\ell}}$. The difficulty therefore lies in the computation of the trace of $\zeta$ at $x=\pm \ell$ and of their second time derivative. The augmented formulation consists in treating $\zeta_\pm$ as a new unknown function of time instead of getting it by taking the traces of $\zeta$ at the contact points. This is made possible by the following proposition which provides a second order ODE satisfied by $\zeta_+$ and $\zeta_-$. This requires first the introduction of the Dirichlet and Neumann inverses of the operator $(1-\kappa^2\dx^2)$ on $\cE$, respectely denoted by $R_0$ and $R_1$. They are defined for all $F\in L^2(\cE)$ by
$$
R_0 F= u \quad \mbox{ with } \begin{cases} (1-\kappa^2 \dx^2)u= F &\mbox{ on }\cE\\ u_{\vert_{x=\pm\ell}}=0 \end{cases},
$$
and
\begin{equation}\label{defR1}
R_1 F= v \quad \mbox{ with } \begin{cases} (1-\kappa^2 \dx^2)v= F &\mbox{ on }\cE\\ \dx v_{\vert_{x=\pm\ell}}=0 \end{cases}.
\end{equation}
We can now state the following proposition.
\begin{proposition}\label{prop1} Let $f$ and $g$ be two continuous functions of time. If $(\zeta,q)$ is a smooth solution to 
\begin{equation}\label{pb1-1-gen}
\begin{cases}
\dt \zeta+\dx q=0,\\
(1-\kappa^2\dx^2)\dt q +\dx \mfsw=0,
\end{cases}
t>0, \quad x\in\cE
\end{equation}
with $\mfsw$ as in \eqref{defmfsw} and with transmission conditions
\begin{equation}\label{pb1-2-gen}
\av{q}(t)=f(t)\quad\mbox{ and }\quad \jump{q}(t)=2g(t), \qquad t>0
\end{equation}
then $\zeta_\pm=\zeta_{\vert_{x=\pm\ell}}$ solve the ODEs
\begin{equation}\label{trace-equation-gen}
\begin{cases}
\dsp \dt^2 \zetap +  \frac{1}{\kappa^2}  \zetap+ \frac{\eps }{\kappa^2} \big( \frac{1}{2}\zetap^2 + \frac{(f+g)^2}{1+\eps\zetap}\big)   =\frac{1}{\kappa^2} ({R}_1 \mfsw)_++\frac{1}{\kappa} (\dot f+\dot g),\\
\dsp \dt^2 \zetam +  \frac{1}{\kappa^2}  \zetam+ \frac{\eps }{\kappa^2} \big( \frac{1}{2}\zetam^2 + \frac{(f-g)^2}{1+\eps\zetam}\big)   =\frac{1}{\kappa^2} ({R}_1 \mfsw)_--\frac{1}{\kappa} (\dot f-\dot g),
\end{cases}
\end{equation}
where we used the notation  $({R}_1 \mfsw)_\pm= ({R}_1 \mfsw)_{\vert_{x=\pm\ell}}$.
\end{proposition}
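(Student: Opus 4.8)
The plan is to eliminate the flux term $\dx\mfsw$ in the second equation of \eqref{pb1-1-gen} by applying the Neumann resolvent $R_1$ of $1-\kappa^2\dx^2$, thereby recognizing that $\dt q+\dx(R_1\mfsw)$ solves the homogeneous equation $(1-\kappa^2\dx^2)w=0$ on each half-line $\cE_\pm$. The two ODEs \eqref{trace-equation-gen} then drop out of the explicit Dirichlet-to-Neumann relation satisfied at the contact points $x=\pm\ell$ by bounded solutions of that equation.

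First I would set $w:=\dt q+\dx(R_1\mfsw)$. Since $\dx$ commutes with $1-\kappa^2\dx^2$ and $R_1$ inverts $1-\kappa^2\dx^2$, one has $(1-\kappa^2\dx^2)\dx(R_1\mfsw)=\dx\mfsw$, which exactly cancels the term $(1-\kappa^2\dx^2)\dt q=-\dx\mfsw$ coming from the second equation of \eqref{pb1-1-gen}; hence $(1-\kappa^2\dx^2)w=0$ on $\cE$. A bounded solution of this equation on $\cE_+=(\ell,\infty)$ (resp. on $\cE_-=(-\infty,-\ell)$) must be a constant multiple of $e^{-(x-\ell)/\kappa}$ (resp. of $e^{(x+\ell)/\kappa}$), so comparing $w$ and $\dx w$ at the contact points yields
$$
\dx w_{\vert_{x=\pm\ell}}=\mp\frac{1}{\kappa}\,w_{\vert_{x=\pm\ell}}.
$$

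Next I would evaluate both sides explicitly. For $w_{\vert_{x=\pm\ell}}$: the Neumann condition in \eqref{defR1} makes $\dx(R_1\mfsw)$ vanish at $x=\pm\ell$, so $w_{\vert_{x=\pm\ell}}=\dt q_{\vert_{x=\pm\ell}}=\frac{d}{dt}q_{\vert_{x=\pm\ell}}$, and the transmission conditions \eqref{pb1-2-gen} give $q_{\vert_{x=\pm\ell}}=\av{q}\pm\tfrac12\jump{q}=f\pm g$, whence $w_{\vert_{x=\pm\ell}}=\dot f\pm\dot g$. For $\dx w_{\vert_{x=\pm\ell}}$: differentiating the first equation of \eqref{pb1-1-gen} in time gives $\dx\dt q=-\dt^2\zeta$, while $(1-\kappa^2\dx^2)(R_1\mfsw)=\mfsw$ gives $\dx^2(R_1\mfsw)=\frac1{\kappa^2}\big(R_1\mfsw-\mfsw\big)$, so $\dx w_{\vert_{x=\pm\ell}}=-\dt^2\zeta_\pm+\frac1{\kappa^2}\big((R_1\mfsw)_\pm-(\mfsw)_\pm\big)$. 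Finally, using \eqref{defmfsw} with $h_{\vert_{x=\pm\ell}}=1+\eps\zeta_\pm$ and $q_{\vert_{x=\pm\ell}}=f\pm g$, one gets $(\mfsw)_\pm=\zeta_\pm+\tfrac\eps2\zeta_\pm^2+\eps\frac{(f\pm g)^2}{1+\eps\zeta_\pm}$. Substituting these three expressions into the Dirichlet-to-Neumann relation and moving the zeroth-order terms to the left-hand side produces exactly \eqref{trace-equation-gen}.

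The one step deserving care is the selection of the decaying exponential on each half-line, i.e. ruling out the growing mode $e^{+x/\kappa}$ on $\cE_+$ (and symmetrically on $\cE_-$): this is where one uses that $(\zeta,q)$ is not merely smooth but belongs to the natural functional framework of \cite{BeckLannes}, in which $\mfsw\in L^2(\cE)$ (so $R_1\mfsw\in H^2(\cE)$) and $\dt q$ decays at spatial infinity, so that $w$ is bounded. Everything else is bookkeeping: commuting $\dx$ through $1-\kappa^2\dx^2$, invoking the two defining properties of $R_1$ (it inverts the operator and satisfies a homogeneous Neumann condition at $x=\pm\ell$), and the elementary evaluation of $\mfsw$ at the contact points.
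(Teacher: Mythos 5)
Your proof is correct and follows essentially the same route as the paper's: your relation $\dx w_{\vert_{x=\pm\ell}}=\mp\kappa^{-1}w_{\vert_{x=\pm\ell}}$ for $w=\dt q+\dx R_1\mfsw$ is just the paper's intermediate identity $\dt q+\dx R_1\mfsw=(\dot f\pm\dot g)e^{-\abs{x\mp\ell}/\kappa}$ (obtained there by applying the Dirichlet inverse $R_0$ and using $R_0\dx=\dx R_1$), differentiated in $x$ and evaluated at the contact points. The remaining trace computations, including the evaluation of $\mfsw$ at $x=\pm\ell$, coincide with those in the paper.
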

\begin{proof}
Applying $R_0$ to the second equation in \eqref{pb1-1-gen} and using the boundary condition \eqref{pb1-2-gen}, one gets
$$
\dt q+R_0\dx \mfsw=(\dot f \pm \dot g) \exp(- \frac{\abs{x\mp \ell}}{\kappa})\quad \mbox{ on }\quad \cE^\pm.
$$
Remarking further that $R_0\dx=\dx R_1$, the problem is therefore reduced to
\begin{equation}\label{pb1reduced}
\begin{cases}
\dt \zeta+\dx q=0,\\
\dt q+\dx R_1\mfsw=(\dot f\pm \dot g) \exp(- \frac{\abs{x\mp\ell}}{\kappa}).
\end{cases}
\end{equation}
Differentiating with respect to $x$ the second equation of \eqref{pb1reduced} and using the fact that $\dt\dx q=-\dt^2 \zeta$, one gets
$$
-\dt^2 \zeta+\dx^2 R_1 \mfsw=\mp \frac{1}{\kappa} (\dot f\pm \dot g) \exp\big( -\frac{1}{\kappa}(\abs{x\mp\ell})\big).
$$
Since moreover $\dx^2 =-\frac{1}{\kappa^2}(1-\kappa^2 \dx^2)+\frac{1}{\kappa^2}$, we deduce that
$$
-\dt^2 \zeta- \frac{1}{\kappa^2}  \mfsw +\frac{1}{\kappa^2} R_1 \mfsw=\mp \frac{1}{\kappa} (\dot f \pm \dot g) \exp\big( -\frac{1}{\kappa}(\abs{x\mp\ell})\big).
$$
Taking the trace at $x=\pm \ell$, and substituting ${\mfsw}_{\vert_{x=\pm\ell}}= \zetapm+\eps \big( \frac{1}{2}\zetapm^2 + \frac{(f\pm g)^2}{1+\eps\zetapm}\big)$,
we obtain the equations stated in the proposition.
\end{proof}

{\color{red}{
Equations \eqref{trace-equation-gen} gives the motion of $\zeta_\pm$ only if there is dispersion $\kappa >0$. 
}}

\subsection{The augmented formulation}\label{sectaugmform}

Proposition \ref{prop1} can be applied to the wave-structure interaction system \eqref{transm1}-\eqref{transm4} with $f=\av{q_{\rm i}}$ and $g=-\ell\dot\delta$. Together wih \eqref{transm3}-\eqref{transm4}, this shows that $\av{q_{\rm i}}$, $\delta$, $\zetap$ and $\zetam$ solve the differential system
\begin{equation}\label{GencoupleODE}
{\mathcal M}[\eps\delta,\eps\zetapm]
\frac{d}{dt}\begin{pmatrix} \dsp  \av{q_{\rm i}} \\  \dsp \dot\delta \\ \dsp \dot{{{\zeta}}}{}_+  \\ \dsp  \dot{{\zeta}}{}_- \end{pmatrix}
+ 
\begin{pmatrix} \dsp \frac{1}{2\ell} \jump{\zeta}\\  \dsp \delta-\av{\zeta} \\ \dsp  {{{\zeta}}}{}_+  \\ \dsp   {{\zeta}}{}_- \end{pmatrix}
= \eps {\boldsymbol {\mathfrak Q}}[\eps\delta,\eps\zetapm](\av{q_{\rm i}},\dot\delta,\zetapm)
+ \begin{pmatrix}  0 \\ F_{\rm ext} \\\dsp ({R}_1 \mfsw)_+ \\ \dsp  ({R}_1 \mfsw)_-  \end{pmatrix},
\end{equation}
where ${\mathcal M}[\eps\delta,\eps\zetapm]$ is the invertible matrix
\begin{equation}\label{defMtot}
{\mathcal M}[\eps\delta,\eps\zetapm]:=
\left(
\begin{array}{cc|cc}
\alpha(\eps\delta)& 0  & \frac{\kappa^2}{2 \ell }\frac{1}{1+\eps\zetap} &  - \frac{\kappa^2}{2 \ell }\frac{1}{1+\eps\zetam} \\ 
0 & \tau_\kappa(\eps\delta)^2 & -  \frac{1}{2} \frac{\kappa^2}{1+\eps\zetap} & -  \frac{1}{2} \frac{\kappa^2}{1+\eps\zetam} \\ \hline
-{\kappa} & \ell \kappa & \kappa^2 &0  \\
\kappa & \ell \kappa & 0 & \kappa^2 
\end{array}
\right),
\end{equation}
while ${\boldsymbol {\mathfrak Q}}[\eps\delta,\eps\zetapm](\av{q_{\rm i}},\dot\delta,\zetapm)$ is a four-dimensional vector whose entries are quadratic forms in $(\av{q_{\rm i}},\dot\delta,\zetapm)$ with coefficients depending on $\eps\delta$, $\eps\zetap$ and $\eps\zetam$ (the exact expression of these terms is of no importance at this point, and we refer the reader to Appendix \ref{appcoeff}). {\color{red}{The differential system \eqref{GencoupleODE} can classically be transformed into a first order ODE on  $\av{q_{\rm i}}$, $\dot\delta$, $\dot\zetap$, $\dot\zetam$, $\delta$, $\zetap$ and $\zetam$ with forcing terms $(R_1\mfsw)_\pm$ and $F_{\rm ext}$ (see Appendix \ref{appGC})}}. The augmented formulation is obtained by replacing $\zetap$ and $\zetam$ by two additional unknowns $\tzetap$ and $\tzetam$ in this first order ODE. It reads therefore
\begin{equation}\label{transm1augm}
\begin{cases}
\dt \zeta+\dx q=0,\\
(1-\kappa^2\dx^2)\dt q +\dx\mfsw=0
\end{cases}
\quad\mbox{ for }\quad t>0, \quad x\in \cE,
\end{equation}
where $\mfsw$ is as in \eqref{defmfsw}, and with transmission conditions across the floating object given by
\begin{equation}\label{transm2augm}
\av{q}=\av{q_{\rm i}}\quad \mbox{ and } \quad \jump{q}=-2\ell \dot\delta,
\end{equation}
where $\av{q_{\rm i}}$ and $\delta$ are functions of time determined by the first order ODE 
\begin{equation}\label{ODEaugm}
\frac{d}{dt}\Theta={\mathcal G}\big(\Theta, (R_1\mfsw)_+,(R_1\mfsw)_-,F_{\rm ext}\big),
\end{equation}
with $\Theta:=\big(\av{q_{\rm i}},\dot\delta,\dot\tzetap,\dot\tzetam,\delta,\tzetap,\tzetam\big)^{\rm T}$ and where ${\mathcal G}$ is a smooth function of its arguments and whose exact expression is given in Appendix \ref{appcoeff}. It is a consequence of Proposition \ref{propIVP} below that if the initial data for $\tzetapm$ and $\dtzetapm$ are chosen appropriately, then $\zetapm=\tzetapm$ and $\dzetapm=\dtzetapm$ for all times, as expected.

\begin{remark}
The difference between the augmented formulation \eqref{transm1augm}-\eqref{ODEaugm} and the original formulation \eqref{transm1}-\eqref{transm4} lies in the ODE used to determine the functions $\delta$ and $\av{q_{\rm i}}$ involved in the transmission conditions. In the original formulation, one has a first order $3$-dimensional ODE (on $\delta$, $\dot \delta$ and $\av{q_{\rm i}}$), which is forced by $F_{\rm ext}$, $(R_1 \mfsw)_{\vert_{x=\pm\ell}}$, $\zeta_{\vert_{x=\pm\ell}}$ and $\frac{d^2}{dt^2}\zeta_{\vert_{x=\pm\ell}}$. In the augmented formulation, the ODE is of higher dimension, namely, it is a first order $7$-dimensional ODE (on $\delta$, $\dot \delta$,  $\av{q_{\rm i}}$, $\tzetapm$ and $\dtzetapm$), but it is forced only by $F_{\rm ext}$ and $(R_1 \mfsw)_{\vert_{x=\pm\ell}}$. These two quantities do not raise any difficulty since $F_{\rm ext}$ is a given external force and $(R_1 \mfsw)_{\vert_{x=\pm\ell}}$ can easily be computed numerically (see \S \ref{sectdiscR} below), contrary to the traces of $\zeta$ and $\dt^2 \zeta$ at the contact points that appear in the original formulation and that are very delicate to compute.
\end{remark}
\subsection{Transformation into an initial  value problem}\label{secttransfIVP}

We reformulate in this section the wave-structure {\it transmission} problem \eqref{transm1augm}-\eqref{ODEaugm} in the form of the {\it initial value} problem {\color{red}{of theorem}} \ref{propIVP-nonlocal} that is easier to handle from a numerical point of view. This formulation is the augmented version of the formulation used in \cite{BeckLannes} (see also the lecture notes \cite{LannesBressanone}) to prove the local-well posedness of \eqref{transm1}-\eqref{transm4}. The local well-posedness of this augmented formulation could be established in the same way; for the sake of conciseness, we do not give here such a result and just prove that both formulations have the same regular solutions, and that the additional variables $\tzetapm$ coincide with the traces $\zeta_{\vert_{x=\pm\ell}}$ under certain compatibility conditions on the initial data.   We use the following notation for the source term in the reformulated momentum equation, 
\begin{equation}\label{defS}
{\mathcal S}_\pm\big(\Theta, (R_1\mfsw)_\pm,F_{\rm ext}\big):={\mathcal G_1}\big(\Theta, (R_1\mfsw)_\pm,F_{\rm ext}\big)\mp \ell {\mathcal G_2}\big(\Theta, (R_1\mfsw)_\pm,F_{\rm ext}\big),
\end{equation}
where ${\mathcal G}_1$ and ${\mathcal G}_2$ denote the first two components of the mapping ${\mathcal G}$ in the right-hand side of the ODE \eqref{ODEaugm}.
We also recall that we denote $\cE^-=(-\infty,-\ell)$ and $\cE^+=(\ell,\infty)$ the two connected components of the fluid domain $\cE$. 
\begin{proposition}\label{propIVP}
Let $(\zeta,q)$ and $\Theta=\big(\av{q_{\rm i}},\dot\delta,\dtzetap,\dtzetam,\delta,\tzetap,\tzetam\big)^{\rm T}$ be a regular solution to the transmission problem \eqref{transm1augm}-\eqref{ODEaugm} with initial data $U=(\zeta^{\rm in},q^{\rm in})$ and $\Theta^{\rm in}$. Then $(\zeta,q)$ and $\Theta$ also solve the initial value problem \eqref{IVP1}
and \eqref{IVP2}.
The converse is true, provided that the initial data satisfy the compatibility conditions
\begin{equation}\label{CI1}
\av{q^{\rm in}}=\Theta^{\rm in}_1, \qquad \jump{q^{\rm in}}=-2\ell\Theta^{\rm in}_2.
\end{equation}
If moreover the initial data also satisfy
\begin{equation}\label{CI2}
 \zeta^{\rm in}_{\vert_{x=\ell}}=\Theta^{\rm in}_6, \qquad \zeta^{\rm in}_{\vert_{x=-\ell}}=\Theta^{\rm in}_7, 
 \qquad -(\dx q^{\rm in})_{\vert_{x=\ell}}=\Theta^{\rm in}_3, \qquad -(\dx q^{\rm in})_{\vert_{x=-\ell}}=\Theta^{\rm in}_4,
\end{equation}
then for all times, one has $\zeta_{\vert_{x=\pm\ell}}=\tzetapm$.
\end{proposition}
\begin{proof}
Let us first prove the direct implication. Proceeding as in the proof of Proposition \ref{prop1}, we can rewrite the second equation of \eqref{transm1augm} on each component $\cE^\pm$ of $\cE$ under the form
$$
\dt q +\dx R_1\mfsw=\frac{d}{dt}(q_{\vert_{x=\pm \ell}})\exp(-\frac{1}{\kappa}\abs{x\mp \kappa})\quad\mbox{ on }\quad \cE^\pm.
$$
From the transmission conditions \eqref{transm2augm}, we have $q_{\vert_{x=\pm\ell}}=\mp\ell\delta+\av{q_{\rm i}}$ so that the result follows from the observation that, owing to \eqref{ODEaugm}, one has 
$$
\frac{d}{dt}\av{q_{\rm i}}={\mathcal G}_1(\Theta, (R_1\mfsw)_\pm,F_{\rm ext})
\quad\mbox{ and }\quad \frac{d}{dt}\dot\delta={\mathcal G}_2(\Theta, (R_1\mfsw)_\pm,F_{\rm ext}).
$$
Conversely, if $(\zeta,q)$ solves \eqref{IVP1}, it suffices to apply $(1-\kappa^2\dx^2)$ to the second equation to show that $(\zeta,q)$ solves \eqref{transm1augm}. The equation \eqref{ODEaugm} on $\Theta$ is the same as \eqref{IVP2}, so that the only thing we need to prove is that the transmission conditions \eqref{transm2augm} hold. Taking the trace of the second equation of \eqref{IVP1} at the contact points and taking the average and the jump, we find that
$$
\frac{d}{dt}\av{q}={\mathcal G_1}(\Theta,(R_1\mfsw)_\pm,F_{\rm ext}) \quad\mbox{ and }\quad \frac{d}{dt}\jump{q}=-2\ell{\mathcal G}_2(\Theta,(R_1\mfsw)_\pm,F_{\rm ext}),
$$
or equivalently (from the definition of ${\mathcal G}$),
$$
\frac{d}{dt}\av{q}=\frac{d}{dt}\av{q_{\rm i}} \quad\mbox{ and }\quad \frac{d}{dt}\jump{q}=\frac{d}{dt}\big(-2\ell \dot\delta\big).
$$
This shows that the time derivative of the transmission conditions \eqref{transm2augm} are satisfied; the compatibility conditions \eqref{CI1} show moreover that the transmission condition condition is satisfied at $t=0$. It is therefore satisfied for all times.\\
For the last assertion, we can use Proposition \ref{prop1} to show that $\zeta_{\vert_{x=\pm\ell}}$ and $\tzetapm$ satisfy the same second order ODE in time. The additional condition \eqref{CI2} ensures that there initial data and the initial value of the first time derivative coincide (we also used the first equation of \eqref{transm1augm} to substitute $\frac{d}{dt}(\zeta_{\vert_{x=\pm\ell}})=-(\dx q)_{\vert_{x=\pm\ell}}$). They are therefore identical for all times.
\end{proof}

{\color{red}{Finally to show theorem \ref{propIVP-nonlocal}, we only have to ensure that \eqref{IVP1}-\eqref{IVP2} is well-posed. This directly follows from Cauchy-Lipschitz theorem since \eqref{IVP1}-\eqref{IVP2} is an ODE on $H^n({\mathcal E}_\pm) \times H^{n+1}({\mathcal E}_\pm) \times \mathbb{R}^7$. Indeed, one can show that the operator
$$
\partial_x \big({\mathfrak F}_\kappa(\cdot) \big) : H^n({\mathcal E}_\pm) \times H^{n+1}({\mathcal E}_\pm) \to H^n({\mathcal E}_\pm) \times H^{n+1}({\mathcal E}_\pm)
$$
is Lipschitz continous.
See proposition 3.3 and theorem 3.3 in \cite{BeckLannes} for more details.
}}

\section{Numerical schemes}\label{sectschemes}

We present in this section one first order and one second order numerical scheme for the resolution of the augmented formulations derived in this article. We explain these schemes in the general formulation \eqref{IVP1}-\eqref{IVP2}. 
{\color{red}{We recall that this equations are conservation laws with nonlocal flux and with a dispersive boundary layer as source term}},
\begin{equation}\label{WE2_BNL}
\dt U+\dx \big({\mathfrak F}_\kappa(U) \big)={\mathcal S}_\pm(\Theta,(R_1\mfsw)_-,(R_1\mfsw)_+,F_{\rm ext}) 
{\mathfrak b}(x\mp\ell)
\qquad \mbox{ in }{\mathcal E}_\pm
\end{equation}
with $U=(\zeta,q)^T$ and where ${\mathfrak F}_\kappa(U)$ is the nonlocal flux given by
\begin{equation}\label{deftzetaNL}
{\mathfrak F}_\kappa(U)=\big(q,R_1 \mfsw \big)^T,
\end{equation}
while the source terms ${\mathcal S}_\pm$  and shape of the dispersive boundary layer ${\mathfrak b}$ are given in theorem \ref{propIVP-nonlocal}.
The quantity $\Theta$ is defined as $
\Theta=\big( \av{q_{\rm i}}, \dot \delta,\dtzetap,\dtzetam,\delta,\tzetap,\tzetam\big)^{\rm T}$
and solves a system of $7$ first order ODEs forced by $(R_1\mfsw)_+$, $(R_1\mfsw)_-$ and $F_{\rm ext}$,
\begin{equation}\label{ODE1ThetaNL}
\frac{d}{dt}\Theta={\mathcal G}\big(\Theta,(R_1\mfsw)_+,(R_1\mfsw)_- ,F_{\rm ext}\big).
\end{equation}
\begin{remark}
As explained in Remark \ref{remODE}, in some of the examples considered in this paper, the ODE \eqref{ODE1ThetaNL} can be reduced to a possibly lower dimensional ODE; we refer to Appendix \ref{appcoeff} where such simplifications are derived.
\end{remark}

\subsection{Notations}

We gather here the main notations used to write our numerical schemes. We first set our notations for the discretized quantities, and then explain how we define the discrete version of the nonlocal operator $R_1$ defined in \eqref{defR1}.
\subsubsection{Discretization}
    \begin{figure}[ht!]
 	\center
 	\includegraphics[height=1.3cm]{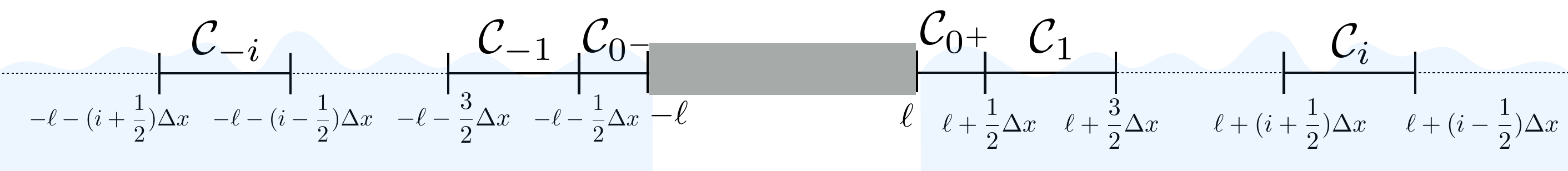}
 	\caption{Space discretzation}
 	\label{discret}
 \end{figure}
We denote by $\Delta x$ the mesh size and decompose the two components $\cE^-$ and $\cE^+$ of the exterior domain into a disjoint union of cells (see figure \ref{discret}),
$$
\cE^-=\Big(\bigcup_{i=-\infty}^{-1} {\mathcal C}_i \Big)\cup {\mathcal C}_{0^-}
\quad \mbox{ and }\quad 
\cE^+={\mathcal C}_{0^+}\cup \Big(\bigcup_{i=1}^\infty {\mathcal C}_i \Big)
$$
with
$$
{\mathcal C}_i=(x_{i-1/2},x_{i+1/2})\mbox{ if } i\neq 0
\quad\mbox{ and }\quad
{\mathcal C}_{0^-}=(x_{-1/2},-\ell), \qquad {\mathcal C}_{0^+}=(\ell,x_{1/2})
$$
and where
$$
x_{i+1/2}=-\ell+(i+1/2)\Delta x \quad \mbox{ if }i<0
\quad\mbox{ and }\quad
x_{i-1/2}=\ell+(i-1/2)\Delta x \quad \mbox{ if }i>0.
$$
\begin{remark}
Of course, the numerical domain is of finite size but we work with large enough domains so that the influence of the left and right boundaries of the numerical domain are not seen in the computations. For the sake of clarity, we do not mention these boundaries in the presentation of the numerical scheme (we used a wall boundary condition $q=0$ at these artificial boundaries for all our computations).
\end{remark}

We also write  $\Delta t>0$  the time stepping and denote by
$$
U^n=U(n\Delta t) ,\qquad  \Theta^n= \Theta( n \Delta t )
\quad\mbox{ and }\quad F_{\rm ext}^n=F_{\rm ext}(n\Delta t)
$$
the values of $U=(\zeta,q)^{\rm T}$, of the $\RR^7$-valued vector $\Theta$  involved in the ODE \eqref{ODE1ThetaNL}, and of the external force $F_{\rm ext}$ at each time step. We further denote by $U^n_i$ ($i\in \big({\mathbb Z}\backslash\{0\}\big)\cup \{0^-,0^+\}$) the approximation of ${\mathcal U}^n$ on the cell ${\mathcal C}_i$ furnished by the numerical scheme.

\subsubsection{About the nonlocal operator $R_1$}\label{sectdiscR}

The equations \eqref{WE2_BNL}-\eqref{ODE1ThetaNL} involve the quantities $R_1\mfsw$ and $(R_1\mfsw)_\pm$, where we recall that $R_1$ is the inverse of $(1-\kappa^2\dx^2)$ on $\cE^-\cup \cE^+$ with Neumann boundary condition at $\pm\ell$, as defined in \eqref{defR1}, and that $(R_1\mfsw)_\pm$ stands for the trace of $R_1\mfsw$ at $\pm\ell$.

We keep the same notation  $R_1$ for the discrete inverse of the operator $(1-\kappa\dx ^2)$ with homogeneous Neumann condition at the boundary. We use here a standard centered second order finite difference approximation for the discretization of $\dx^2$. More precisely, if $F=(f_i)_{\abs{i}\geq 1}$, we denote by $R_1 F$ the vector $R_1 F=V$ where $V=(v_i)_{\abs{i}\geq 1}$ is given by the resolution of the equations
$$
v_i -\kappa^2\frac{v_{i+1}-2 v_i +v_{i-1}}{\Delta x^2}=f_i,\qquad \abs{i}\geq 2
$$
while, for $i=\pm1$ a second order discretization of the Neumann boundary condition leads to
$$
v_{-1} -\kappa^2 \frac{2}{3}\frac{v_{-2}- v_{-1}}{\Delta x^2}=f_{-1}.
\quad\mbox{ and }\quad
v_1 -\kappa^2 \frac{2}{3}\frac{v_{2}- v_1}{\Delta x^2}=f_1.
$$
Similarly, we still denote by $({R}_1F)_\pm$ the discrete version of the traces ${R}_1F$ at the boundaries; they are naturally defined by the second order approximation
\begin{equation}\label{defR1pm}
\big({R}_1 F\big)_-=\frac{4}{3}v_{-1}-\frac{1}{3}v_{-2} \quad \mbox{ and }\quad \big({R}_1 F\big)_+=\frac{4}{3}v_{1}-\frac{1}{3}v_2.
\end{equation}

\subsection{A first order scheme}\label{sectLF}

We propose here an adaptation of the Lax-Friedrichs scheme for the conservation laws with nonlocal flux \eqref{WE2_BNL}. This scheme is an extension of the scheme used in \cite{LannesWeynans} for the numerical simulation of the Boussinesq equations with generating boundary condition (i.e. with data on $\zeta$ at the entrance of the numerical domain). It reads
\begin{equation}\label{LF1}
\frac{U_i^{n+1}-U_i^n}{\Delta t}+\frac{1}{\Delta x}\big( {\mathfrak F}^n_{\kappa,i+1/2}-{\mathfrak F}^n_{\kappa,i-1/2} \big)={\mathcal S}_\pm^n{\mathfrak b}_{i}, \qquad \pm i\geq 1, \quad n\geq 0,
\end{equation}
with 
\begin{equation}\label{LF2}
{\mathcal S}_\pm^n={\mathcal S}_\pm(\Theta^n,(R_1\mfsw^n)_+,(R_1\mfsw^n)_-,F_{\rm ext}^n)
\quad\mbox{ and }\quad
{\mathfrak b}_i={\mathfrak b}(i\Delta x)
\quad\mbox{ if } \pm i>0;
\end{equation}
the discrete flux correspond to the Lax-Friedrichs scheme,
\begin{equation}\label{LF3}
\begin{cases}
{\mathfrak F}_{\kappa,i+1/2}^n=\frac{1}{2}\big( {\mathfrak F}_{\kappa,i+1}^n+{\mathfrak F}_{\kappa,i}^n\big)-\frac{\Delta x}{2\Delta t}\big( U_{i+1}^n-U_{i}^n \big) &\mbox{if } i\leq -2,\\
{\mathfrak F}_{\kappa,i-1/2}^n=\frac{1}{2}\big( {\mathfrak F}_{\kappa,i}^n+{\mathfrak F}_{\kappa,i-1}^n\big)-\frac{\Delta x}{2\Delta t}\big( U_i^n-U_{i-1}^n \big) &\mbox{if } i\geq 2,
\end{cases}
\end{equation}
with the notations
$$
{\mathfrak F}_{\kappa,i}^n=\big( q_i^n, (R_1\mfsw^n)_i \big)^{\rm T}
\quad\mbox{ and }\quad
\mfsw^n=\mfsw(U^n);
$$
finally, for $i=\pm 1$, we must adapt \eqref{LF3} in the following way,
\begin{equation}\label{LF4}
\begin{cases}
{\mathfrak F}_{\kappa,-1/2}^n=\frac{1}{2}\big( {\mathfrak F}_{\kappa,0^-}^n+{\mathfrak F}_{\kappa,-1}^n\big)-\frac{\Delta x}{2\Delta t}\big( U_{0^-}^n-U_{-1}^n \big) \\
{\mathfrak F}_{\kappa,1/2}^n=\frac{1}{2}\big( {\mathfrak F}_{\kappa,1}^n+{\mathfrak F}_{\kappa,0^+}^n\big)-\frac{\Delta x}{2\Delta t}\big( U_1^n-U_{0^+}^n \big) 
\end{cases}
\end{equation}
with 
\begin{equation}\label{LF5}
{\mathfrak F}_{\kappa,0^\pm}^n=\big( q^n_{0^\pm}, (R_1\mfsw^n)_\pm \big)^{\rm T};
\end{equation}
the component $(R_1\mfsw^n)_\pm$ is computed according to \eqref{defR1pm}, but 
we still need to define  $q_{0^\pm}^n$. By definition $q^n_{0^\pm}$ is the approximation at time $n\Delta t$ of the trace of the discharge $q$ at $\pm \ell$. From the transmission conditions \eqref{transm2augm} of the continuous problem, we have $q_{\vert_{x=\pm\ell}}=\av{q_{\rm i}}\mp \ell \dot\delta$. Recalling also that $\av{q_{\rm i}}$ and $\dot\delta$ are respectively the first and second components of $\Theta$, this relation can be rewritten $q_{\vert_{x=\pm\ell}}=\Theta_1\mp\ell \Theta_2$. At the discrete level, this leads to the following definition for $q^n_{0^\pm}$,
\begin{equation}\label{LF6}
q^n_{0^\pm}=\Theta^n_1\mp\ell\Theta^n_2.
\end{equation}
The equation (\ref{ODE1ThetaNL}) is discretized with a first-order explicit Euler scheme:
\begin{equation}\label{eulerexplicit}
\frac{\Theta^{n+1} -\Theta^n }{\Delta t }=   {\mathcal G}\big(\Theta^n,(R_1\mfsw^n)_+,(R_1\mfsw^n)_- ,F_{\rm ext}^n\big).
\end{equation}

The equations \eqref{LF1}-\eqref{eulerexplicit} furnish an induction relation that allows to compute $U^{n+1}$ and $\Theta^{n+1}$ in terms of $U^n$ and $\Theta^n$. It need of course to be initiated with initial data that are taken of the form
\begin{equation}\label{LF7}
U_i^0=(\zeta^{\rm in}_i,q^{\rm in})_i^{\rm T} \qquad ( i\in \big({\mathbb Z}\backslash\{0\}\big)\cup \{0^-,0^+\}),
\end{equation} 
with $\zeta^{\rm in}$ and $q^{\rm in}$ describing the initial wave field in the exterior domain, and
\begin{equation}\label{LF8}
\Theta^0=\big(\av{q_{\rm i}}^{\rm in}, \delta^{(1)}, \underline{\zeta}_+^{(1)}, \underline{\zeta}_-^{(1)}, \delta^{(0)}, \underline{\zeta}_+^{(0)}, \underline{\zeta}_-^{(0)} \big)^{\rm T}
\end{equation}
satisfies the discrete version of the compatibility conditions of Proposition \ref{propIVP}, namely,
\begin{equation}\label{LF9}
\av{q^{\rm in}}=\av{q_{\rm i}}^{\rm in},\qquad \jump{q^{\rm in}}=-2\ell \delta^{(1)},\qquad \underline{\zeta}_\pm^{(0)}=\zeta_\pm^{\rm in}, \qquad
\underline{\zeta}^{(1)}_\pm=-(\dx q^{\rm in})_\pm.
\end{equation}

\subsection{A second order scheme}\label{sectMC}
We propose here an adaptation of the MacCormack scheme  for the conservation laws with nonlocal flux \eqref{WE2_BNL}, 
coupled with  a second-order Heun integration scheme for the system of 7 first-order ODEs (\ref{ODE1ThetaNL}). 
Both are predictor-corrector schemes.
We use the same notations as in the previous subsection and can decompose the scheme into four main steps:\\
- {\it Prediction step for the MacCormack scheme}. This reads
\begin{equation}\label{predictionMC}
\frac{U_i^{n,*}-U_i^n}{\Delta t}+\frac{1}{\Delta x}\big( {\mathfrak F}^n_{\kappa,i}-{\mathfrak F}^n_{\kappa,i-1} \big)={\mathcal S}_+^n{\mathfrak b}_i, \qquad   i > 1, \quad n\geq 0,
\end{equation}
with  $U_i^{n,*}=(\zeta_i^{n,*},q_i^{n,*})^{\rm T}$. We use a symmetric scheme with respect to $x=0$ so that, for negative values of $i$, we use a forward rather than backward derivative for the flux,
\begin{equation}
\frac{U_i^{n,*}-U_i^n}{\Delta t}+\frac{1}{\Delta x}\big( {\mathfrak F}^n_{\kappa,i+1}-{\mathfrak F}^n_{\kappa,i} \big)={\mathcal S}_-^n{\mathfrak b}_i, \qquad  i < -1,
\quad n\geq 0,
\end{equation}
For $i = 1$ and $i=-1$ it reads
\begin{align}
\frac{U_1^{n,*}-U_1^n}{\Delta t}+\frac{1}{\Delta x}\big( {\mathfrak F}^n_{\kappa,1}-{\mathfrak F}^n_{\kappa,0^+} \big)&={\mathcal S}_+^n{\mathfrak b}_1,\\
\frac{U_{-1}^{n,*}-U_{-1}^n}{\Delta t}+\frac{1}{\Delta x}\big( {\mathfrak F}^n_{\kappa,0^-}-{\mathfrak F}^n_{\kappa,-1} \big)&={\mathcal S}_-^n{\mathfrak b}_{-1},  
\end{align}
for $n\geq 0$ and with ${\mathfrak F}_{\kappa,0^\pm}^n$ as in \eqref{LF5}. \\
- {\it Prediction step for the Heun scheme}. This step is  similar to a first-order explicit Euler scheme,
\begin{equation}\label{predictionHeun}
\frac{\Theta^{n,*} -  \Theta^n }{\Delta t} =  {\mathcal G}\big(\Theta^n,(R_1\mfsw^n)_+,(R_1\mfsw^n)_- ,F_{\rm ext}^n\big).
\end{equation}
- {\it Corrector step for the MacCormack scheme}. With the quantities computed in the previous steps,  we define
\begin{equation}
\mfsw^{n,*}=\mfsw(U^{n,*}) \quad  \mbox{ and } \quad q^{n,*}_{0^\pm}=\Theta^{n,*}_1\mp\ell\Theta^{n,*}_2
\end{equation}
as well  as an intermediate non-local flux and an intermediate source term,
\begin{align}\label{sourcestar}
{\mathfrak F}_{\kappa,i}^{n,*} &=\big( q_i^{n,*}, (R_1\mfsw^{n,*})_i \big)^{\rm T}  \qquad  \abs{i}\geq 1, \quad n\geq 0, \\
{\mathcal S}_\pm^{n,*} &={\mathcal S}_\pm(\Theta^{n,*},(R_1\mfsw^{n,*})_+,(R_1\mfsw^{n,*})_-,F_{\rm ext}^{n}) \quad n\geq 0.
\end{align}
The correction step  for the MacCormack scheme then reads
\begin{equation}\label{correctionMC}
\frac{U_i^{n+1}-U_i^n}{\Delta t}+\frac{  {\mathfrak F}^n_{\kappa,i} -{\mathfrak F}^n_{\kappa,i-1}+ {\mathfrak F}^{n,*}_{\kappa,i+1}  -{\mathfrak F}^{n,*}_{\kappa,i}  }{ 2\Delta x} = \frac{ {\mathcal S}_+^n  + {\mathcal S}_+^{n,*}}{2}{\mathfrak b}_i\qquad i\geq 1,
\end{equation}
for $ n\geq 0$. Here again, we take a symmetric scheme so that for $i\leq -1$, we take a forward difference of ${\mathfrak F}^n$ and a backward difference of ${\mathfrak F}^{n,*}$,
\begin{equation}\label{correctionMCneg}
\frac{U_i^{n+1}-U_i^n}{\Delta t}+\frac{  {\mathfrak F}^n_{\kappa,i+1} -{\mathfrak F}^n_{\kappa,i}+ {\mathfrak F}^{n,*}_{\kappa,i}  -{\mathfrak F}^{n,*}_{\kappa,i-1}  }{ 2\Delta x} = \frac{ {\mathcal S}_-^n  + {\mathcal S}_-^{n,*}}{2}{\mathfrak b}_i\qquad i\leq -1;
\end{equation}
 in particular, there is no need to defined boundary values ${\mathfrak F}_{\kappa,0^\pm}^{n,*}$, of the intermediate flux.\\
 -{\it Correction step for the Heun scheme}. This reads, for $n\geq 0$,
\begin{equation}\label{correctionHeun}
\frac{\Theta^{n+1} -  \Theta^n }{\Delta t}  =  \frac{  {\mathcal G}\big(\Theta^n,(R_1\mfsw^n)_\pm ,F_{\rm ext}^n\big) + {\mathcal G}\big(\Theta^{n,*},(R_1\mfsw^{n,*})_\pm ,F_{\rm ext}^{n+1}\big) }{2} .
\end{equation}

The initial data have the same form as for the first order scheme described in the previous subsection.

{\color{red}{
\begin{remark}
For very fine meshes, spurious oscillations may appear in some cases (this has been observed in the wave generation problem presented in \S \ref{sectWG} below when there are sharp variations of the boundary data. These oscillations are reminiscent of the oscillations that appear when using dispersive schemes (such as the Lax-Wendroff or MacCormack schemes) to simulate shock waves. Flux-limiters methods are typically used to control this phenomenon \cite{Leveque}. Here, these oscillations are created at the boundary, whose position is fixed, and we use a very simple efficient method consisting in adding an artificial viscosity on a finite number $n_0$ of cells near the boundary. More precisely, in the right-component of the fluid domain (the left component is treated symmetrically) we add the following term in the right-hand side of the first component of \eqref{correctionMCneg},
$$
\nu \frac{\Delta x}{\Delta t} \big( \zeta^n_{i+1}-2\zeta^n_i+\zeta^n_{i-1}\big) \qquad 1\leq i\leq n_0
$$
with $\nu>0$ a fixed coefficient. This corresponds to an artificial viscosity $\nu \frac{(\Delta x)^3}{\Delta t} \dx^2 \zeta$; for a fixed ration $\Delta x/ \Delta t$, this viscosity is of order $2$ and therefore does not alter the overall second order of the MacCormack scheme.
\end{remark}
}}

\section{Numerical simulations}\label{sectnumsim}

We have seen in \S \ref{sectreduc} that the wave-structure interaction problem under consideration in this paper can be reduced to a transmission problem potentially coupled to two forced ODEs for the vertical displacement $\delta$ of the object and the mean discharge $\av{q_{\rm i}}$ under the object. 

We first consider in \S \ref{sectWG} a situation where this coupling is absent. This corresponds to the case where a wave is generated in a wave tank by moving the object vertically with a prescribed motion. This example is of particular interest since it provides an efficient way to generate waves for the Boussinesq equations at the entrance of a numerical domain if we have at our disposal time series of the horizontal discharge at the boundary, hereby extending the result of \cite{LannesWeynans} where data on the surface elevation were used.

We then consider in \S \ref{sectRTE} the return to equilibrium problem (also called decay test or drop test by engineers) which consists in releasing an object from an out of equilibrium position and to observe its oscillations. These examples involve the coupling of the transmission problem with the ODE on $\delta$. In the linear case, we are able to derive exact explicit solutions that we compute to check the numerical convergence of our scheme; the nonlinear case is then investigated and the importance of the dispersive effects pointed out by comparaing with simulations based on the nonlinear shallow water equations instead of the Boussinesq system.

We then investigate in \S \ref{sectfixed} a configuration where the transmission problem is coupled to the interior discharge $\av{q_{\rm i}}$, namely, the interaction of waves with a fixed partially immersed object. Here again, we derive an explicit exact solution in the linear case that we use to validate that this coupling is also of second order. The nonlinear case is then considered.

Finally, a configuration involving the most general coupling (with both $\delta$ and $\av{q_{\rm i}}$ is considered in \S \ref{sectfreely}; it consists in the interaction of a solitary wave with an object freely floating in the vertical direction.

\subsection{Wave generation}\label{sectWG}

The first physical configuration we consider consists in creating waves in a fluid initially at rest by moving up and down a partially immersed object. By symmetry, it is enough to consider the waves in the right component $\cE^+=(\ell,\infty)$ of the fluid domain. As shown in \S \ref{appfixforced} of Appendix \ref{appcoeff}, the mathematical formulation of this problem is a particular case of the following initial boundary value problem with boundary condition on the discharge $q$, namely, 
\begin{equation}\label{pb1-1}
\begin{cases}
\dt \zeta+\dx q=0,\\
(1-\kappa^2\dx^2)\dt q +\dx \mfsw=0,
\end{cases}
t>0, \quad x>\ell
\end{equation}
with $\mfsw$ as in \eqref{defmfsw} and with boundary condition
\begin{equation}\label{pb1-2}
q_{\vert_{x=\ell}}(t)=g(t), \qquad t>0
\end{equation}
and initial condition
\begin{equation}\label{pb1-3}
(\zeta,q)_{\vert_{t=0}}(x)=(\zeta^{\rm in},q^{\rm in})(x), \qquad x>\ell,
\end{equation}
and where $g$, $\zeta^{\rm in}$ and $q^{\rm in}$ are some given functions satisfying the compatibility condition
\begin{equation}\label{compat}
q^{\rm in}(x=0)=g(t=0),
\end{equation}
which is obviously necessary to obtain solutions that are continuous at the origin in time and space. This problem is somehow symmetric to the one considered in \cite{LannesWeynans} where a boundary condition on $\zeta$ rather than $q$ was considered and where a first order scheme was proposed. 
\begin{remark}
For the wave generation problem, one has $(\zeta^{\rm in},q^{\rm in})=(0,0)$ and $g(t)=-\ell \dot\delta_{\rm forced}$, where $\delta_{\rm forced}$ is the prescribed vertical discplacement of the center of mass of the object.
\end{remark}
Contrary to the other physical configurations we consider in this article, the wave generation problem (or more generally, the initial boundary value problem \eqref{pb1-1}-\eqref{compat}) does not require the resolution of an ODE to determine the boundary data on the discharge. The formulation as an initial value problem given in Proposition \ref{propIVP} then reduces to
\begin{equation}\label{pb1reduced}
\begin{cases}
\dt \zeta+\dx q=0,\\
\dt q+\dx R_1\mfsw={\mathcal S}_+(t) \exp(- \frac{x-\ell}{\kappa})
\end{cases}
\quad\mbox{ with }\quad {\mathcal S}_+(t)=\dot g(t),
\end{equation}
for $x>\ell$ and with initial condition \eqref{pb1-3} satisfying \eqref{compat}. \\
The numerical scheme presented in \S \ref{sectLF} and \S \ref{sectMC} can be simplified by skipping the second and fourth step related to the Heun scheme, and by taking simply 
$$ {\mathcal S}^n=\frac{g^{n+1}-g^n}{\Delta t}. $$ 

The wave generation problem gives us the opportunity to validate our numerical code with a nonlinear case. The Boussinesq-Abbott equations admits solitary waves solutions of the form
$$
(\zeta,q)(t,x)=\big(\zeta_c(x-x_0-ct), c\zeta_c(x-x_0-ct)\big),
$$
where $c>0$, $x_0\in \RR$ and $\zeta_c$ is a smooth, even and fastly decaying function. These solutions can be used to test the precision of the code. For the Boussinesq-Abbott equations, there is no explicit formula for $\zeta_c$ and it is determined by the resolution of a nonlinear second order ODE, namely,
$$
\frac{c^2\mu}{3}\zeta_c''-c^2 \frac{\zeta}{1+\eps\zeta}+\frac{\eps^2\zeta^2+2\eps\zeta}{2\eps}=0,
$$
with 
$$
c^2=\frac{\eps}{6}\frac{3\zeta_{\rm max}^2+\eps\zeta_{\rm max}^3}{\zeta_{\rm max}-\frac{\ln(1+\eps\zeta_{\rm max})}{\eps}}
$$
(see for instance \cite{LannesWeynans} for more details on the computations); these formula furnish a family of solitary waves parametrized by their maximal amplitude $\zeta_{\rm max}$. Solving the above ODE with a standard high precision ODE solver provides us a solution to \eqref{pb1-1} that we use to assess the precision of the numerical solution obtained with our numerical scheme for \eqref{pb1reduced} with discharge boundary data $g(t)=c\zeta_c(0-x_0-ct)$ and initial data $(\zeta^{\rm in},q^{\rm in})=(\zeta_c(x-x_0-0), c\zeta_c(x-x_0-0)\big)$.

\textcolor{red}{Courbe de convergence. Mettre aussi pour l'ordre 1 (Lax-Friedrichs) puisqu'on a fait une section sur ce schema...}


Once $c$ is computed, we solve the differential equation  (\ref{EDOzeta})  with a high order numerical method in order to obtain our reference solution.
 We  choose   $\zeta_{{\rm max}}=1$ and $\mu = \eps = 0.3$.

The size of the computational domain is $L = 6$.
The space step is computed as $\Delta_x = L/N$, with $N = 200,240,300,400$. 
We take a constant time step $\delta_t = 0.8 \, \delta_x$.
The maximum of the soliton is initially located on the left of the computational domain, at $x = -L/2$, so that the initial datum in the small domain is almost zero, and then the soliton propagates inside it.
The numerical results are presented on Figures \ref{soliton_LF} and \ref{soliton_MC}, showing respectively a first-order and a second-order convergence.

 \begin{figure}[!ht]
\centering
\begin{tabular}{cc}
   \includegraphics[height=50mm]{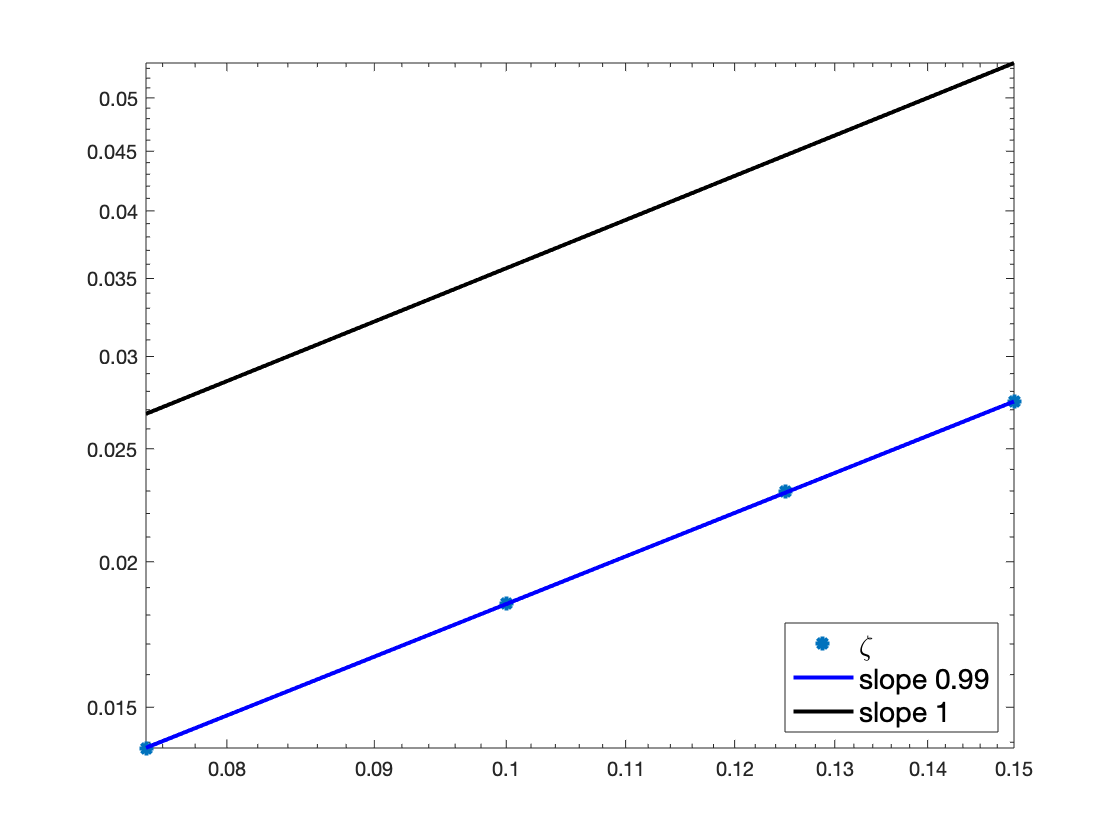}  & \includegraphics[height=50mm]{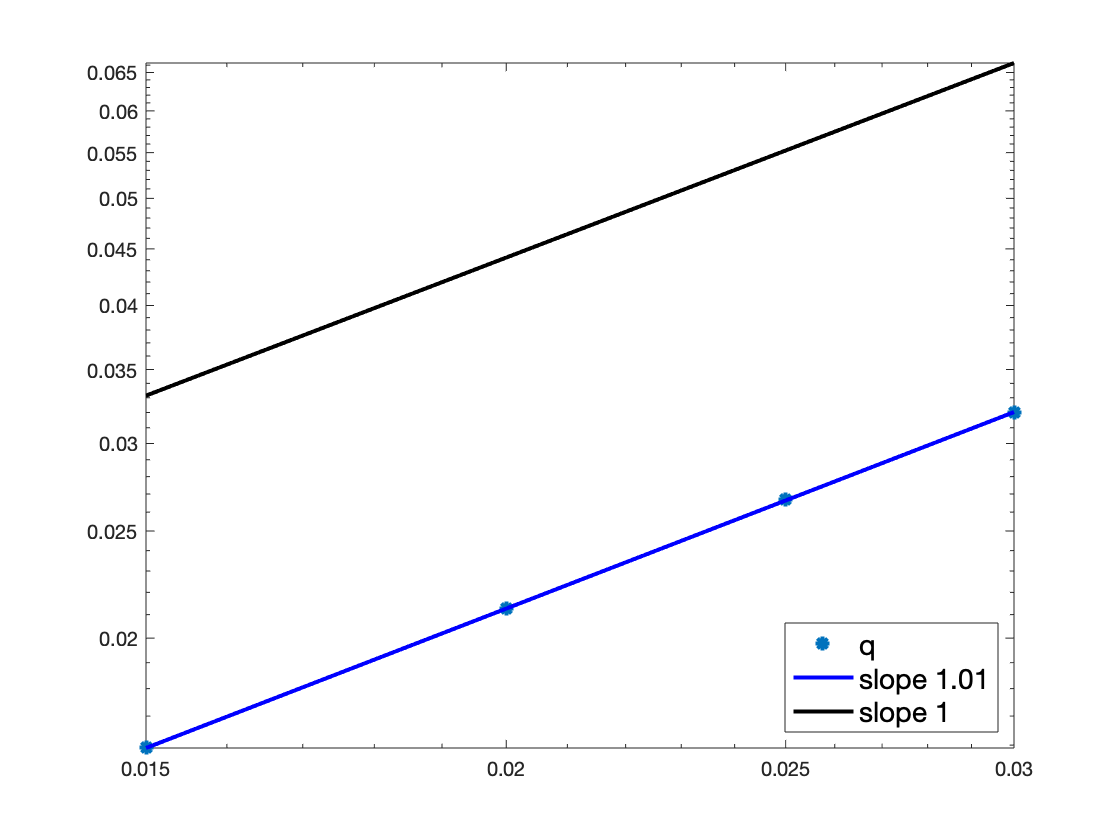}
\end{tabular}
  \caption{Sotitary wave with generating condition on $q$, $L^{\infty}$ error for $\mu = 0.3$ with first-order scheme, left: $\zeta$, right: $q$.}
  \label{soliton_LF}
     \end{figure}

 \begin{figure}[!ht]
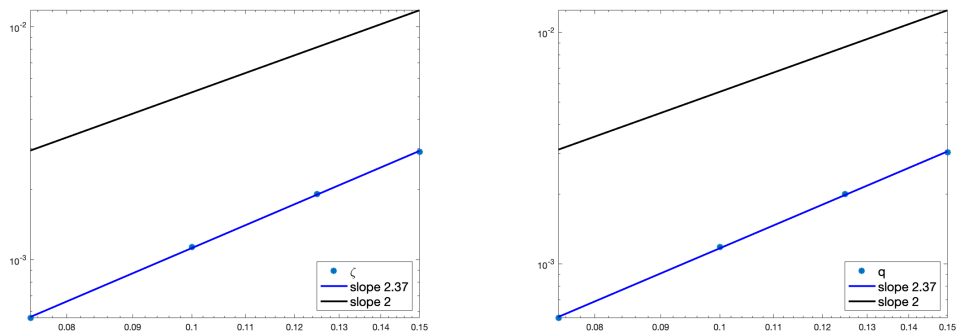

\centering
\begin{tabular}{cc}
   \includegraphics[height=50mm]{fig/LF_zeta_soliton.png}  & \includegraphics[height=50mm]{fig/LF_q_soliton.png}
\end{tabular}
  \caption{Sotitary wave with generating condition on $q$, $L^{\infty}$ error for $\mu = 0.3$ with second-order scheme, left: $\zeta$, right: $q$.}
  \label{soliton_MC}
     \end{figure}

\subsection{Return to equilibrium}\label{sectRTE}

We consider here the return to equilibrium problem (also called decay test), which consists in dropping the floating object from an out of equilibrium position and to let it oscillate vertically and stabilize towards its equilibrium position. This is a problem of practical importance because it is used by engineers to characterize some buoyancy properties of the solid, and theoretically because it leads to simpler equations than the general wave-structure equations. For instance, in the nonlinear non dispersive case ($\eps\neq 0$, $\kappa=0$), it is possible to show that the dimensionless vertical displacement $\eps\delta$ of the solid with respect to its equilibrium position  is fully described by a second order nonlinear scalar ODE \cite{Lannes_float,BeckLannes} and that in the linear dispersive case ($\eps=0$, $\kappa\neq 0$) it is governed by a second order linear integro-differential equation \cite{BeckLannes}. In the nondispersive case, similar equations have also been derived  in the presence of viscosity \cite{Maity} in the linear case, as well as in the $2D$ radial and partially linear case \cite{Bocchi2}. In  the presence of nonlinear {\it and} dispersive effects ($\eps\neq 0$, $\kappa\neq 0$), it does not seem possible to derive such a simple equation for the motion of the solid and the wave-structure equations must therefore be solved. 

As the wave generation problem, there is a symmetry in this problem which allows to consider only the right part $\cE^+$ of the fluid domain and the governing wave-structure interaction equations reduce to an initial boundary value problem of the form \eqref{pb1-1} with $g=-\ell\dot\delta$. The difference is that the vertical displacement $\delta$ is no longer a given function $\delta$ but is found through the resolution of Newton's equation (see \eqref{transm4} for its general expression). Since this equation involves the trace of $\zeta$ at the contact point, we have to work with the augmented formulation provided by Proposition \ref{propIVP}. Since in this particular case, one has $F_{\rm ext}\equiv 0$, $\av{q_{\rm i}}\equiv 0$ and $\zeta_+=\zeta_-$, the $7$-dimensional ODE on $\Theta$ can be simplified into a simpler $4$-dimensional ODE (see \S \ref{appfixforced} in Appendix \ref{appcoeff}). The interest of this test case is that, since the interior discharge identically vanishes,  it allows us to investigate specifically the coupling between the waves and the vertical displacement of the object. We first consider in \S \ref{sectcvrte} the linear case for which explicit solutions exist and can be used to investigate the precision of the code, and then show in \S \ref{sectNLrte} some simulations in the nonlinear case.

\subsubsection{Convergence error in the linear case}\label{sectcvrte}
We first consider the linear case ($\eps=0$) since in the case, it was shown in \cite{BeckLannes} that the evolution of $\delta$ can be found by solving a linear second order integro-differential equation, namely,
$$
\big(\tau_\kappa(0)^2+\ell\kappa\big)\ddot\delta+\ell {\mathcal K}^1_\kappa * \dot\delta+\delta=0,
$$
with initial conditions $\delta(0)=\delta_0$ and $\dot\delta(0)=0$ and where the kernel ${\mathcal K}_\kappa^1$ is given in terms of the first Bessel function $J_1$ by the relation
$$
{\mathcal K}^1_\kappa(t)=\frac{1}{t}J_1\big(\frac{t}{\kappa}\big).
$$
The solution of this integro-differential equation is given explicitly by taking the Laplace transform (denoted with a hat),
\begin{equation}\label{exRTE}
\widehat{\delta}(s)=\frac{\tau_\kappa(0)^2s+\ell\sqrt{1+\kappa^2 s^2}}{\tau_\kappa(0)^2 s^2+s\ell \sqrt{1+\kappa^2 s^2}+1}\delta_0, \qquad s\in {\mathbb C}_0, 
\end{equation}
where ${\mathbb C}_0$ is the half-plane of complex numbers $s$ such that $\Re s>0$. The vertical displacement deduced from the exact formula \eqref{exRTE}, and denoted $\delta_{\rm exact}$ is compared with the surface elevation $\delta$ found by solving the wave-structure equations using the numerical schemes presented in \S \ref{sectschemes}. In order to discard possible numerical errors in the computation of the inverse Laplace transform one has to apply to \eqref{exRTE}, two different method (the Euler and Talbot inversion methods \textcolor{blue}{\cite{talbot_euler}}; we impose that they match up to $10^{-4}$ terms to consider the solution provided as relevant to be considered as an exact solution for our convergence studies.\\

\textcolor{blue}{
In our numerical tests we chose $\mu = 0.3$ or $\mu = 0.1$, $h_{eq} = 1-0.3$, $l = 4$ and the size of the computational domain $L = 30$.
 The space steps $\Delta_x$ were computed as $\Delta_x = (L-l)/(N+1)$, with $N=300,400,500,600$ for the first-order scheme and $N = 60,120,240,320$ for the second-order scheme.
 The time step was computed as $\Delta_t = 0.9 \Delta_x$.
The numerical results at final time $T_f = 15$ computed with the first-order and the second-order schemes show respectively a first-order convergence, see Figure \ref{return_linear_LF} and a second-order convergence, see Figure \ref{return_linear_MC}.
On Figure \ref{return_linear_compaLFMC} we compare the numerical results for the two schemes for $\mu = 0.3$ and $N = 100$, showing evidence that it is advantageous to use the second-order scheme.
}
 \begin{figure}[!ht]
\centering
\begin{tabular}{cc}
   \includegraphics[height=50mm]{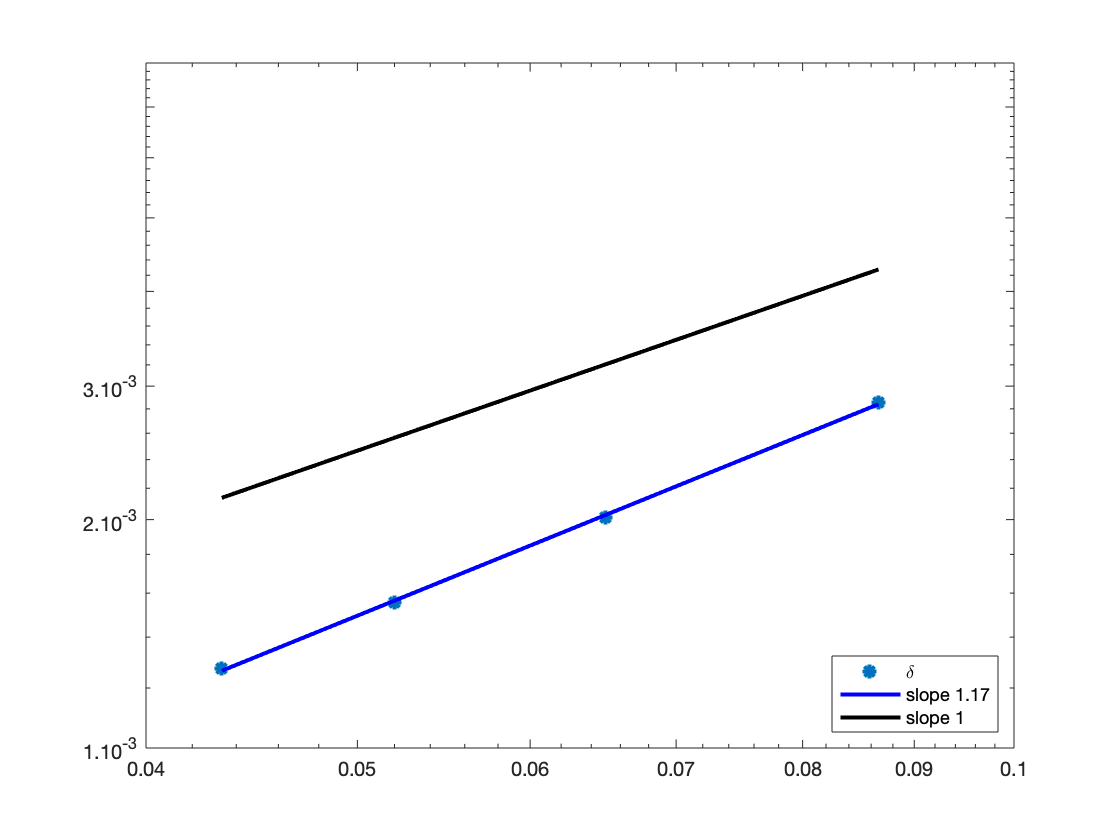}  & \includegraphics[height=50mm]{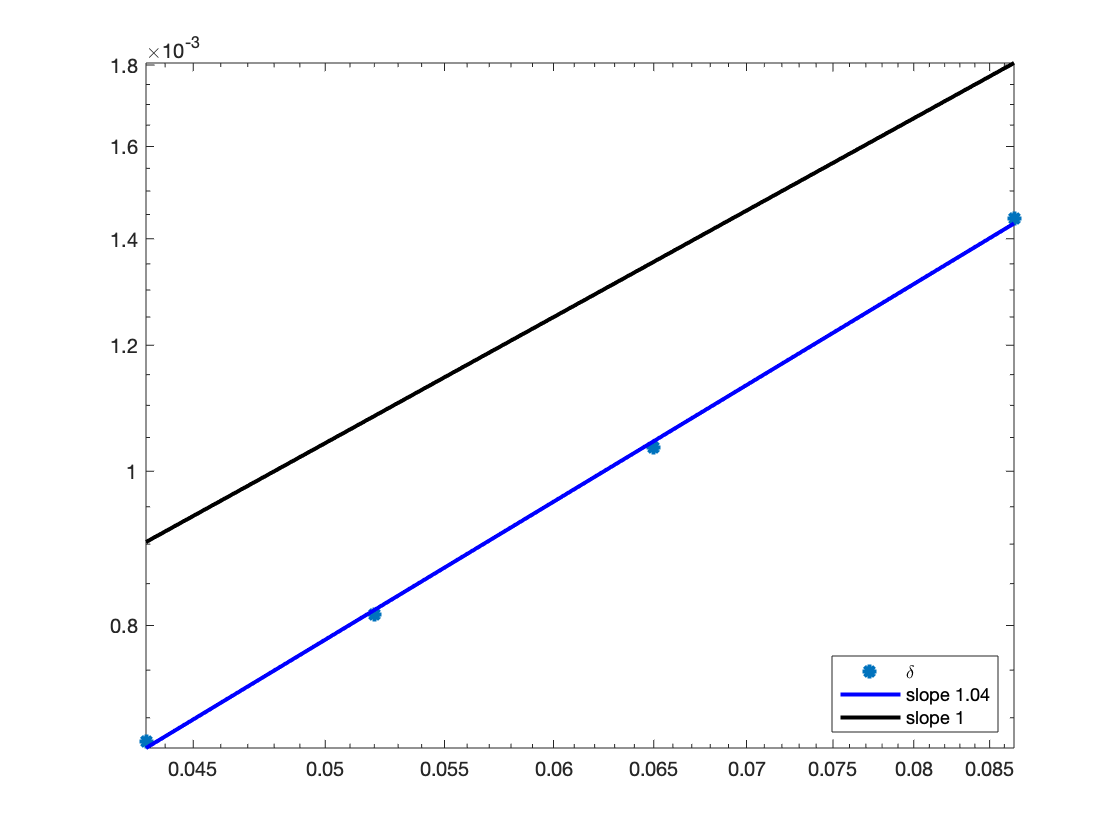}
\end{tabular}
  \caption{Return to equilibrium, linear case: convergence results for $\delta$ with the first-order scheme, $\mu = 0.1$ (left) and $\mu = 0.3$ (right).}
  \label{return_linear_LF}
     \end{figure}

 \begin{figure}[!ht]
\centering
\begin{tabular}{cc}
   \includegraphics[height=50mm]{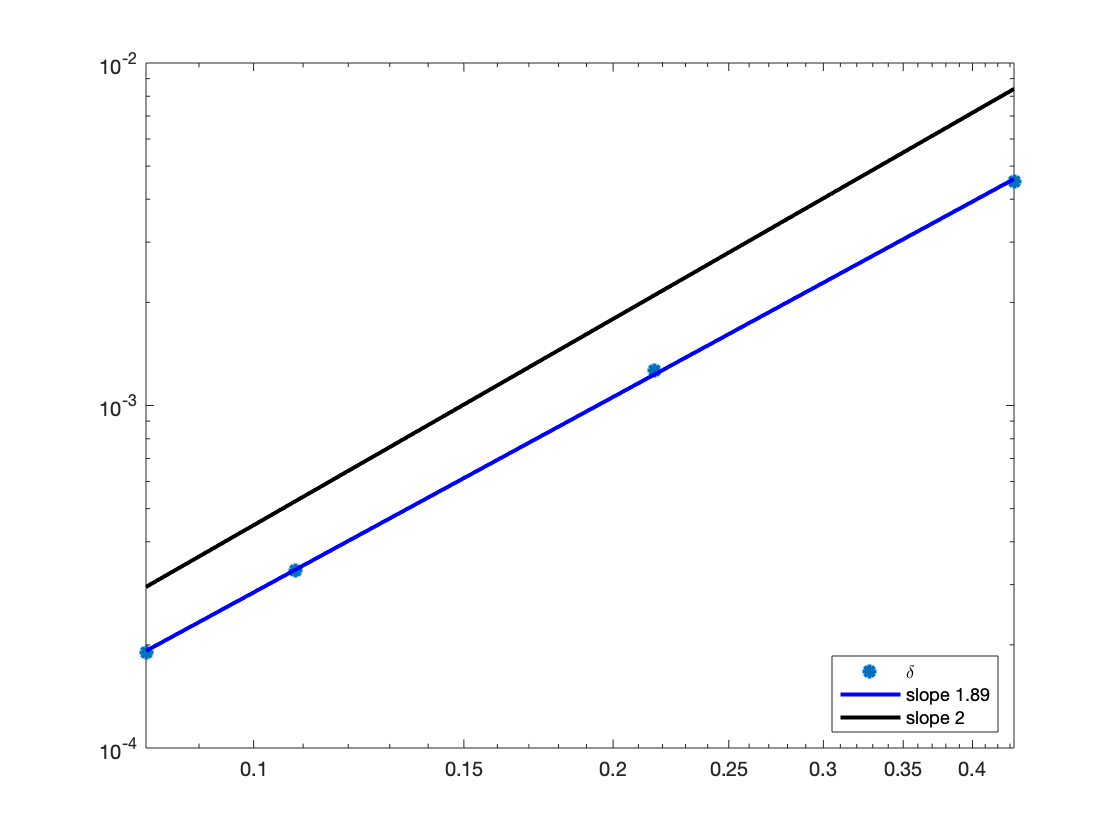}  & \includegraphics[height=50mm]{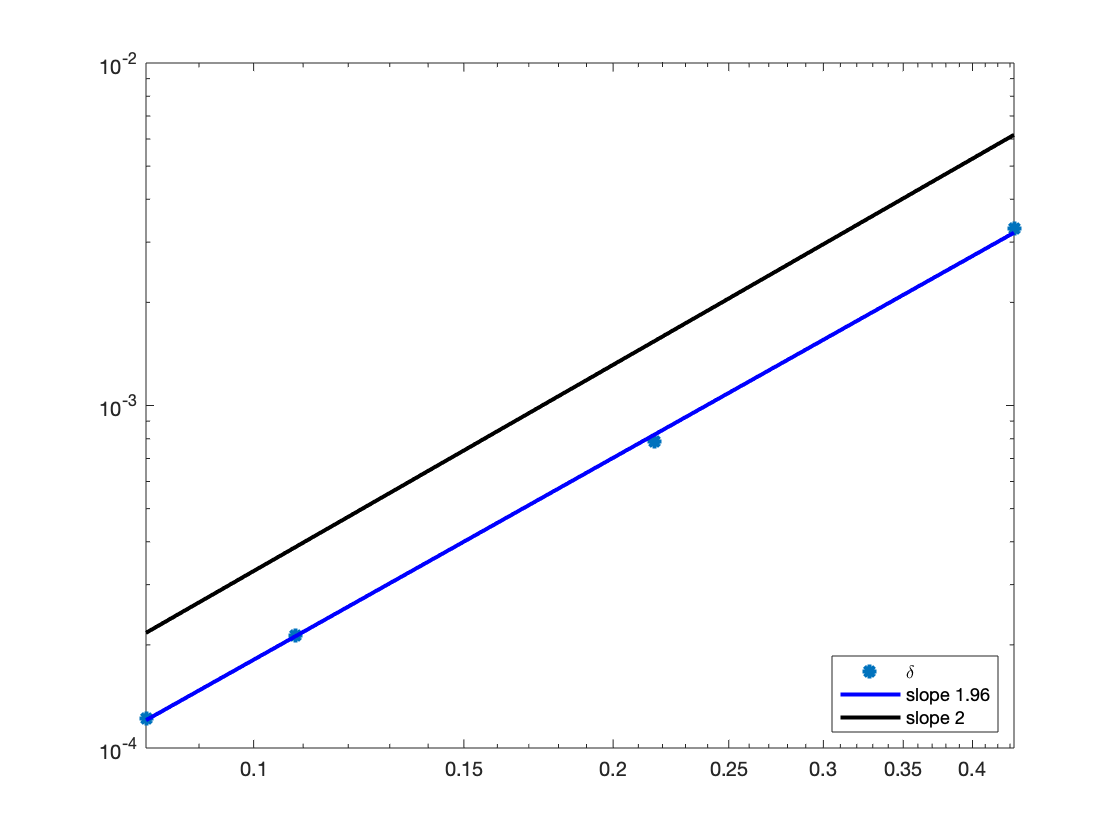}
\end{tabular}
  \caption{Return to equilibrium, linear case: convergence results for $\delta$ with the second-order-order scheme, $\mu = 0.1$ (left) and $\mu = 0.3$ (right).}
  \label{return_linear_MC}
     \end{figure}

 \begin{figure}[!ht]
\centering
\begin{tabular}{cc}
   \includegraphics[height=50mm]{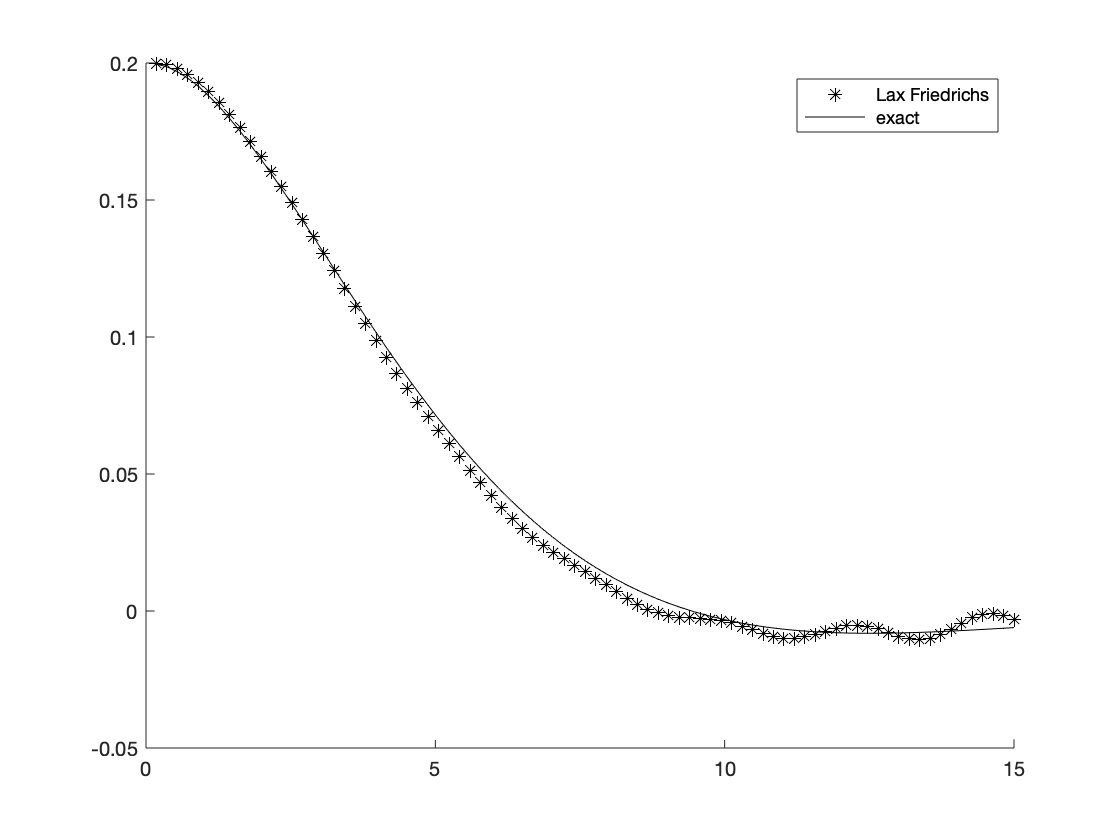}  & \includegraphics[height=50mm]{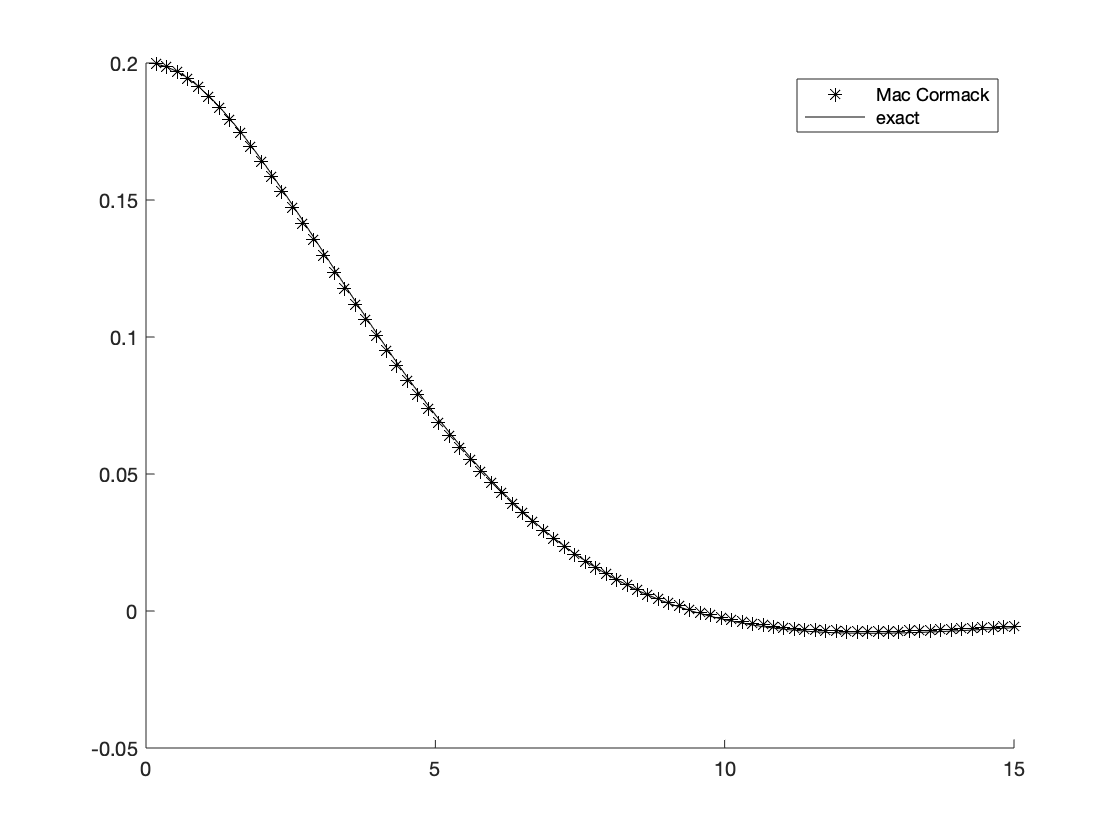}
\end{tabular}
  \caption{Return to equilibrium, linear case: temporal evolution of $\delta$, comparison between the first-order scheme and the second-order scheme, $\mu = 0.3$ and $N = 100$.}
  \label{return_linear_compaLFMC}
     \end{figure}

\subsubsection{The nonlinear case}\label{sectNLrte}
For the nonlinear case ($\eps\neq 0$), we no longer have explicit solutions to compare with. 
{\textcolor{red}{Convergence en maillage pour $\delta $. Faire aussi une representation avec comparaison solution St-Venant pour montrer importance des effets dispersifs. \\
We then consider the weakly non-linear linear case ($0 < \eps \ll 1$). In that case, it was shown in \cite{BeckLannes} that the evolution of $\delta$ can be found by solving a second order differential equation when the initial condition satisfies
$$
\inf_{{\mathcal I}}h_{\rm eq}-\eps \abs{\delta_0}>0
\quad \text{and} \quad
 \eps^2 \delta_0^2 <  {\tau_0(\eps \abs{\delta_0})^2}  \left(\frac{8}{27\ell}\right)^2.
$$
This ODE is
\begin{equation}\label{ODESW2bis}
{{\tau_0(\eps\delta)^2}}\ddot\delta+\ell \dot\delta+\delta +\eps B(\eps\delta) \dot\delta^2 =0
\end{equation}
where $B(\eps\dot\delta)$ is defined in theorem 4.1 of \cite{BeckLannes}.
One can show that approximatively
\begin{equation}\label{ODESW2ter}
B(\eps\delta) = \beta(\eps\delta) + \frac{\ell^2}{4}  + O(\varepsilon)
\end{equation}
where $\beta(\eps\delta)$ is defined in the appendix.
}}

\textcolor{blue}{
For this test the reference solution is computed with a very refined mesh: $N = 2400$.
 We chose $\epsilon = 0.3$, $\mu = 0.3$ or $\mu = 0.1$, $h_{eq} = 1-0.3$, $l = 4$ and the size of the computational domain $L = 30$.
 The space steps $\Delta_x$ were computed as $\Delta_x = (L-l)/(N+1)$, with $N=160,200,240,300,400$ for the first-order scheme and $N = 120,160,200$ for the second-order scheme.
  The meshes are defined so that the points of the coarse meshes always coincide with the points of the very refined mesh of the reference solution.
 The time step was computed as $\Delta_t = 0.7 \Delta_x$.
The numerical results at final time $T_f = 15$ computed with the first-order and the second-order schemes show respectively a first-order convergence, see Figure \ref{return_nonlinear_LF} and a second-order convergence, see Figure \ref{return_nonlinear_MC}.
On Figure \ref{return_nonlinear_compaLFMC} we compare the numerical results for $\mu = 0.3$ and $\mu = 0.1$ with the solution for the Saint-Venant case computed with the ODE (\ref{ODESW2bis}) and the approximation (\ref{ODESW2ter}) of $B(\eps\delta)$. 
It can be seen that the three temporal profiles are very close, and that the discrepancy decreases when the value of $\mu$ decreases.
}

 \begin{figure}[!ht]
\centering
\begin{tabular}{cc}
   \includegraphics[height=50mm]{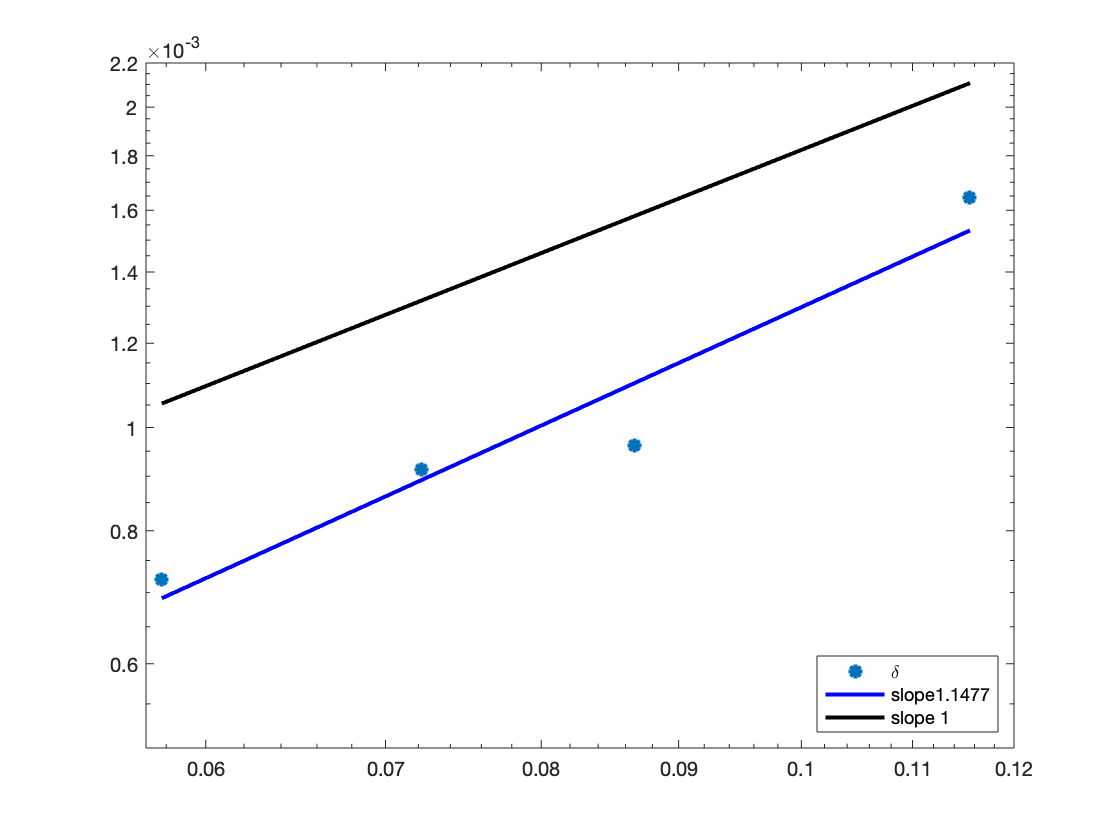}  & \includegraphics[height=50mm]{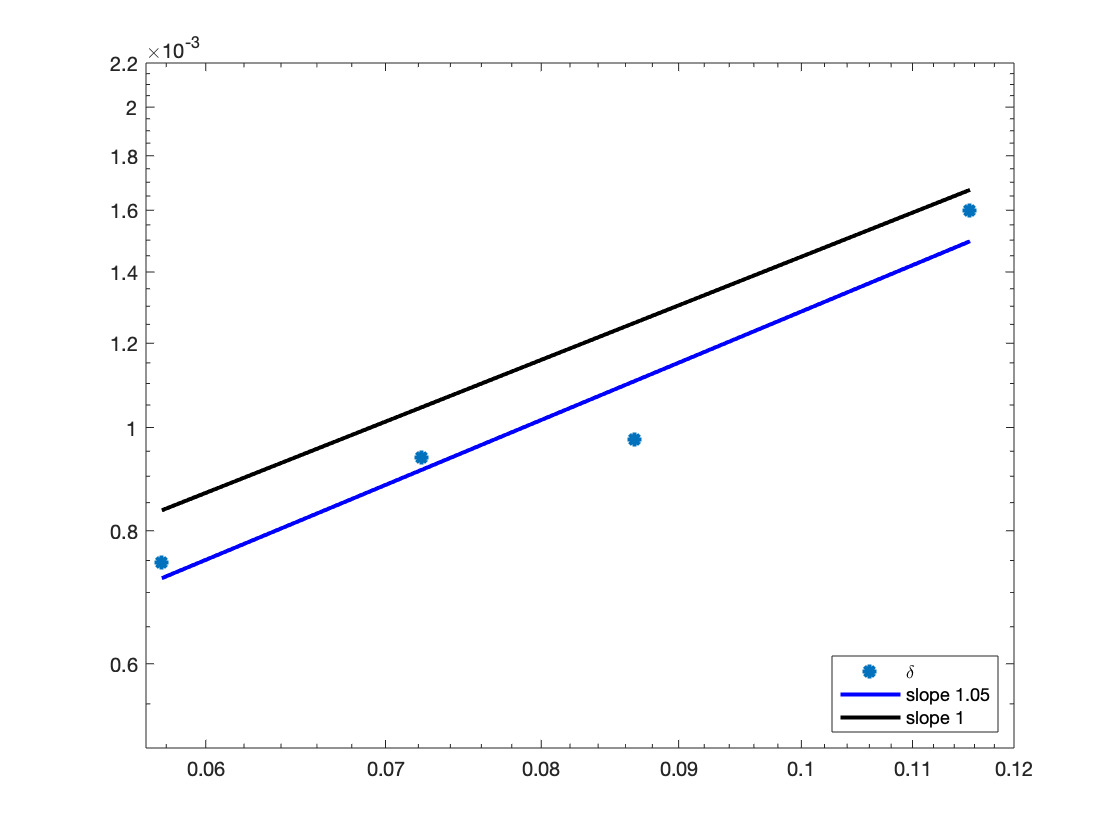}
\end{tabular}
  \caption{Return to equilibrium, non-linear case: convergence results for $\delta$ with the first-order scheme, $\mu = 0.1$ (left) and $\mu = 0.3$ (right).}
  \label{return_nonlinear_LF}
     \end{figure}

 \begin{figure}[!ht]
\centering
\begin{tabular}{cc}
   \includegraphics[height=50mm]{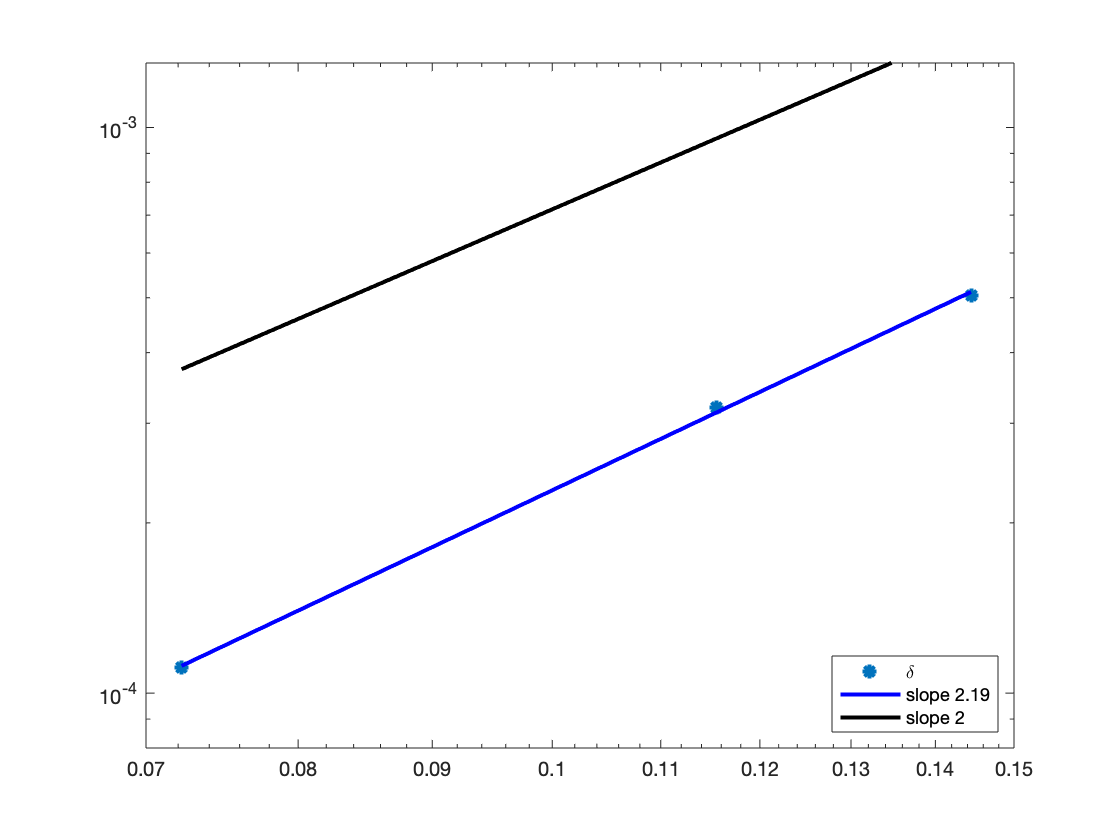}  & \includegraphics[height=50mm]{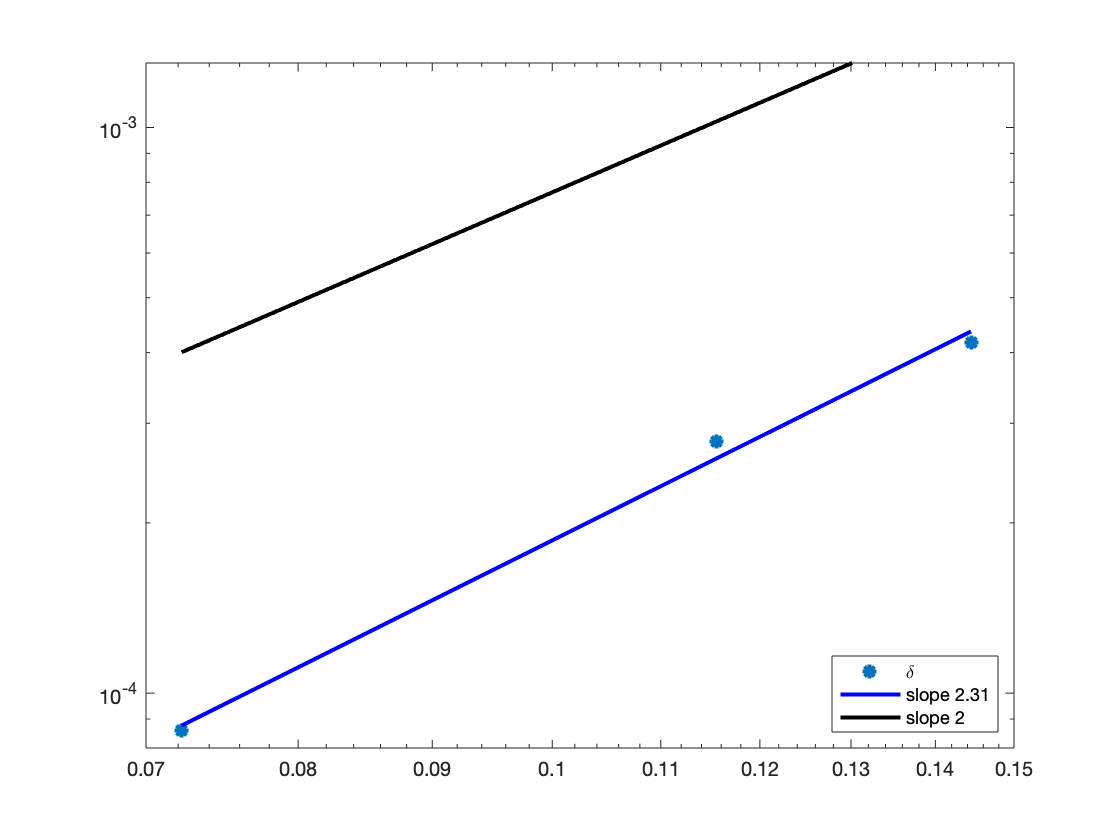}
\end{tabular}
  \caption{Return to equilibrium, non-linear case: convergence results for $\delta$ with the second-order-order scheme, $\mu = 0.1$ (left) and $\mu = 0.3$ (right).}
  \label{return_nonlinear_MC}
     \end{figure}

 \begin{figure}[!ht]
\centering
\begin{tabular}{cc}
   \includegraphics[height=50mm]{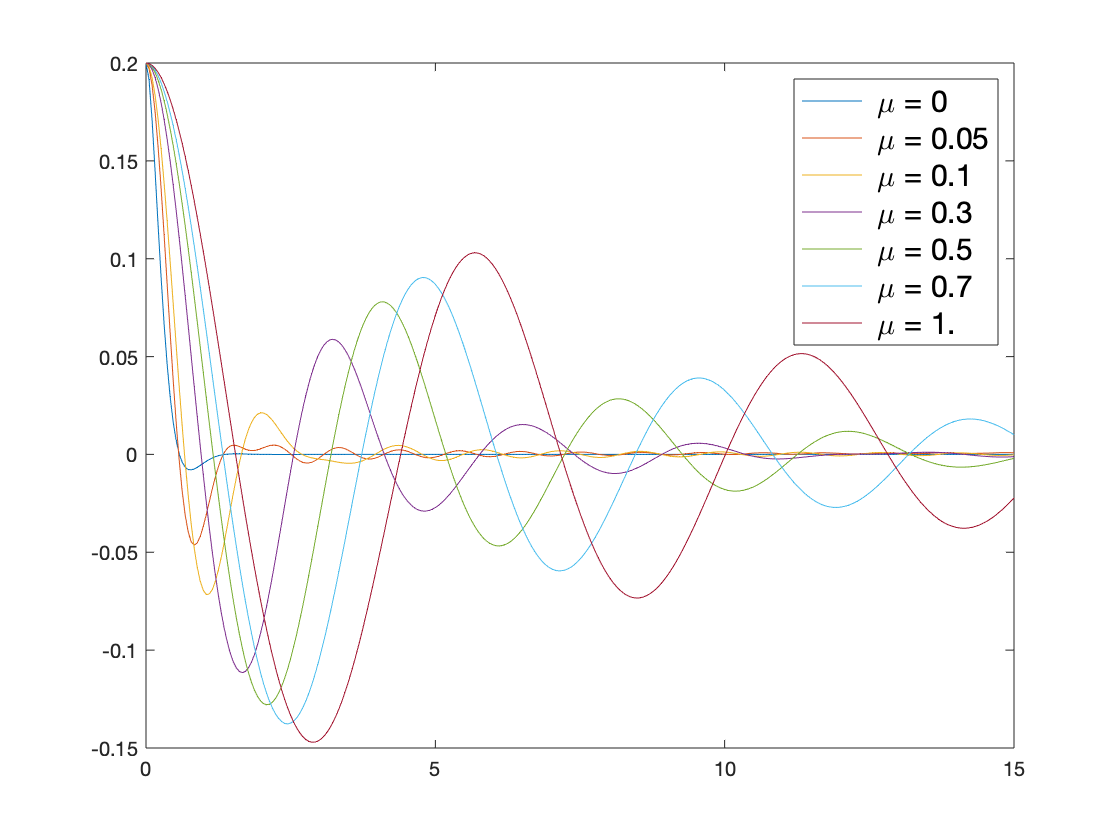}  & \includegraphics[height=50mm]{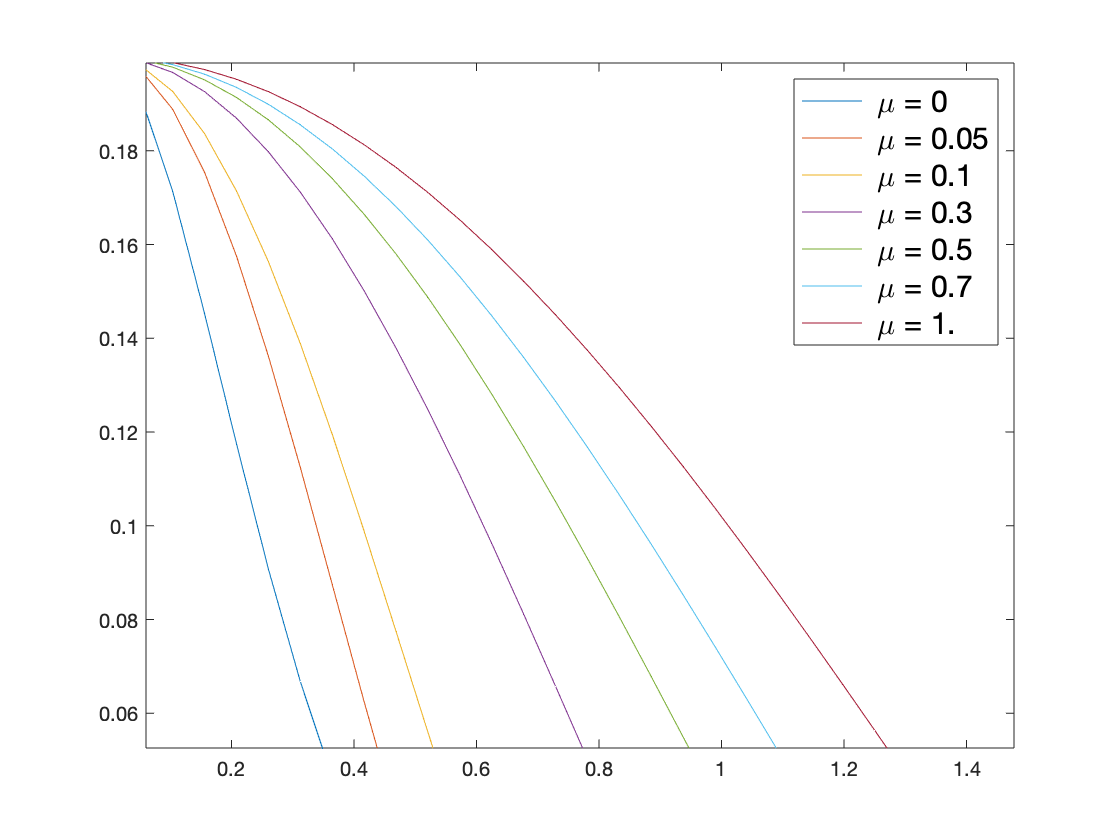}
\end{tabular}
  \caption{Return to equilibrium, non-linear case, left: temporal evolution of $\delta$ for $\epsilon = 0.3$, comparison between results with the second-order scheme obtained with $N = 400$, $\mu = 0.3$ or $\mu = 0.1$  and the approximate solution in the Saint Venant case, right: zoom to observe the difference between the 3 cases.}
  \label{return_nonlinear_compaLFMC}
     \end{figure}

\subsection{Waves interacting with a fixed object}\label{sectfixed}

We consider here waves that are interacting with a fixed partially immersed object. This is a particular example of prescribed motion ($\delta\equiv 0$), but contrary to the wave generation problem, the waves are not supposed to be symmetric with respect to the central axis $\{x=0\}$. It follows that the interior discharge $\av{q_{\rm i}}$ does not vanish identically and that it must be found by solving the ODE \eqref{transm3}. From a mathematical point of view, this physical configuration is somehow symmetric to the return to equilibrium problem in the sense that it allows one to focus on the coupling of the fluid equation with the dynamic of the interior discharge $\av{q_{\rm i}}$ (since $\delta\equiv 0$), while for the return to equilibrium problem the coupling was only with the vertical displacement $\delta$ (since in that case $\av{q_{\rm i}}\equiv 0$). In this case, the $7$-dimensional ODE on $\Theta$ of the augmented formulation given in Proposition \ref{propIVP} can be reduced to a $5$-dimensionnal ODE, as explained in \S \ref{appfixforced} of Appendix \ref{appcoeff}. We first study in \S \ref{sectcvlinfix} the linear case for which we exhibit a family of explicit solutions that we use to validate our code; the nonlinear case is then considered in \S \ref{sectNLfix}

\subsubsection{Convergence error in the linear case}\label{sectcvlinfix}
In order to investigate the ability of our scheme to correctly describe the coupling of the Boussinesq-Abbott equation with the average interior discharge $\av{q_{\rm i}}$ we exhibit an explicit solution of the equations in the linear case ($\eps=0$). In that case, the wave-structure equations \eqref{transm1}-\eqref{transm4} take the form
\begin{equation}\label{linfix1}
\begin{cases}
\dt \zeta+\dx q=0,\\
(1-\kappa^2\dx^2)\dt q+\dx \zeta=0
\end{cases}
\quad \mbox{ in }\quad \cE^\pm
\end{equation}
with transmission conditions
\begin{equation}\label{linfix2}
\jump{q}=0\quad\mbox{ and }\quad \av{q}=\av{q_{\rm i}}
\end{equation}
and where $\av{q_{\rm i}}$ solves the forced ODE
\begin{equation}\label{linfix3}
\alpha(0)\frac{d}{dt}\av{q_{\rm i}}=-\frac{1}{2\ell}\jump{\zeta+\kappa^2 \dt^2 \zeta}.
\end{equation}
A family of exact solutions which are periodic in time is given in the following proposition (which can be checked with basic computations ommitted here).
\begin{proposition}
Let $k \neq 0$ and $\omega \neq 0$ satisfy the 
   the dispersion relation
$$
 \omega^2= \frac{k^2}{1+ \kappa^2 k^2}.
$$
For all $(\zeta_+^c, \zeta_-^c, \zeta^s, q_+^s, q_-^s, q^c)\in \RR^6$, the functions $(\zeta,q)$ defined in $\cE^\pm$ by
 $$
  \begin{cases}
 \zeta(x \mp \ell,t) =& \frac{k}{2} \big[ (\zeta_\pm^{c}+ q^{c}) \cos(kx-\omega t) +  (\zeta_\pm^{c}- q^{c}) \cos(kx+\omega t) 
  \\
  & + (\zeta^{s}+ q_\pm^{s}) \sin(kx-\omega t) + (\zeta^{s}- q_\pm^{s}) \sin(kx+\omega t) \big],  \\
q(x \mp \ell,t) =& \frac{\omega}{2} \big[ (\zeta_\pm^{c}+ q^{c}) \cos(kx-\omega t) -  (\zeta_\pm^{c}- q^{c}) \cos(kx+\omega t) 
  \\
  & + (\zeta^{s}+ q_\pm^{s}) \sin(kx-\omega t) - (\zeta^{s}- q_\pm^{s}) \sin(kx+\omega t) \big],
 \end{cases}
 $$
 solve \eqref{linfix1}-\eqref{linfix2} with  initial data
 $$
 \begin{cases}
 \zeta^{\text{\rm in}}(x \pm \ell) = k \zeta_\pm^{c} \cos(kx) + k \zeta^{s} \sin(kx), & x \in \mathcal{E}_\pm
 \\
 q^{\text{\rm in}}(x  \pm \ell) = \omega q^{c} \cos(kx) + \omega q_\pm^{s} \sin(kx), & x \in \mathcal{E}_\pm
 \end{cases}
 $$
 and with 
 $$
 \av{q_{\rm i}}(t)= 
 \omega \big[ q^c \cos(\omega t) - \zeta^s \sin(\omega t)  \big].
 $$
 If moreover
  $$
q^c= -\frac{1}{2 \ell \alpha(0) k} (q_+^s-q_-^s),
 \quad\mbox{ and }\quad 
 \zeta^s= \frac{1}{2 \ell \alpha(0) k} (\zeta_+^c-\zeta_-^c)
 $$
then \eqref{linfix3} is also satisfied, with initial data $\av{q_{\rm i}}^{\rm in}=\omega q^c$.
\end{proposition}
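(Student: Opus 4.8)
The proposition is established by direct verification using the linearity of the equations, and the plan is to check, in order: that the ansatz solves \eqref{linfix1} on each component $\cE^\pm$; that it takes the prescribed initial data; that it satisfies the transmission conditions \eqref{linfix2} and yields the stated $\av{q_{\rm i}}$; and finally that the two scalar constraints on $(\zeta_+^c,\zeta_-^c,\zeta^s,q_+^s,q_-^s,q^c)$ are exactly what is needed for \eqref{linfix3}. For the first point I would observe that the ansatz is a superposition of the four elementary travelling profiles built from $\cos(kx\mp\omega t)$ and $\sin(kx\mp\omega t)$ (with $x$ understood as the local coordinate $x\mp\ell$ that vanishes at the contact point), each paired in $(\zeta,q)$ with the amplitude ratio $\omega/k$ and the sign pattern displayed in the statement. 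For any such profile the continuity equation $\dt\zeta+\dx q=0$ holds by inspection — the amplitude ratio and the sign of the $q$-component are tailored for this — while the momentum equation $(1-\kappa^2\dx^2)\dt q+\dx\zeta=0$, after applying $(1-\kappa^2\dx^2)$ and collecting terms, reduces to the single identity $(1+\kappa^2k^2)\omega^2=k^2$, that is, to the dispersion relation. By linearity the full $(\zeta,q)$ then solves \eqref{linfix1} on $\cE^\pm$.

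Setting $t=0$ collapses each pair $\cos(kx\mp\omega t)$, resp. $\sin(kx\mp\omega t)$, to $\cos(kx)$, resp. $\sin(kx)$, and the mixed terms cancel in pairs (the $q$-quantities drop out of $\zeta$, the $\zeta$-quantities drop out of $q$), so that the surviving coefficients are $k\zeta_\pm^c$, $k\zeta^s$ for $\zeta$ and $\omega q^c$, $\omega q_\pm^s$ for $q$; this reproduces exactly the claimed $(\zeta^{\rm in},q^{\rm in})$. For the transmission conditions I would evaluate the ansatz at a contact point, where the local variable is $0$; a short trigonometric reduction gives
$$q_\pm=\omega\big(q^c\cos(\omega t)-\zeta^s\sin(\omega t)\big),\qquad \zeta_\pm=k\big(\zeta_\pm^c\cos(\omega t)-q_\pm^s\sin(\omega t)\big).$$
The expression for $q_\pm$ does not depend on the side, hence $\jump{q}=0$ and $\av{q}=q_\pm$; comparison with the stated formula for $\av{q_{\rm i}}$ gives both the second condition in \eqref{linfix2} and the identification of $\av{q_{\rm i}}$, together with its initial value $\omega q^c$.

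For the last part I would differentiate $\av{q_{\rm i}}$ once in time, use $\dt^2\zeta_\pm=-\omega^2\zeta_\pm$ and the dispersion relation in the form $1-\kappa^2\omega^2=\omega^2/k^2$ to obtain
$$\jump{\zeta+\kappa^2\dt^2\zeta}=\frac{\omega^2}{k}\big((\zeta_+^c-\zeta_-^c)\cos(\omega t)-(q_+^s-q_-^s)\sin(\omega t)\big),$$
and then insert both sides into \eqref{linfix3}; since $\cos(\omega t)$ and $\sin(\omega t)$ are linearly independent, the identity is equivalent to matching their coefficients, which produces precisely $\zeta^s=\frac{1}{2\ell\alpha(0)k}(\zeta_+^c-\zeta_-^c)$ and $q^c=-\frac{1}{2\ell\alpha(0)k}(q_+^s-q_-^s)$, so under these hypotheses \eqref{linfix3} is satisfied. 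The computation involves no real conceptual obstacle; the only points that demand attention are the opposite sign conventions in the discharge component of the right- and left-moving parts, and the fact that the trace of $q$ at the two contact points must come out independent of the side — this is what makes $\jump{q}=0$ automatic and forces the coupling constraints to involve only the differences $\zeta_+^c-\zeta_-^c$ and $q_+^s-q_-^s$.
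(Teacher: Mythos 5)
Your verification is correct, and it is exactly the ``basic computations'' that the paper explicitly omits (the proposition is stated there with no written proof): checking the continuity equation by the $\omega/k$ amplitude pairing, reducing the momentum equation to the dispersion relation, evaluating the traces at the contact points (local coordinate $0$) to get $q_\pm=\omega(q^c\cos\omega t-\zeta^s\sin\omega t)$ and $\zeta_\pm=k(\zeta_\pm^c\cos\omega t-q_\pm^s\sin\omega t)$, and using $1-\kappa^2\omega^2=\omega^2/k^2$ to match the $\cos\omega t$ and $\sin\omega t$ coefficients in \eqref{linfix3}, which yields precisely the two stated constraints and $\av{q_{\rm i}}^{\rm in}=\omega q^c$. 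No gaps; your argument coincides with the intended one.
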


\textcolor{blue}{
In our numerical tests we chose $\mu = 0.3$ or $\mu = 0.1$, $h_{eq} = 1-0.2$, $l = 1$, $k=2$ and the size of the computational domain $L = 10$.
 The space steps $\Delta_x$ were computed as $\Delta_x = (L-l)/(N+1)$, with $N [200,240,300,360,400]$ for the second-order scheme.
 The time step was computed as $\Delta_t = 0.9 \Delta_x$.
 To impose the exact solution on both left and right outer boundaries we impose generating boundary conditions as described in subsection \ref{sectWG}.
The numerical results at final time $T_f = 1$ computed show respectively a second-order convergence, see Figure \ref{fixed_linear_MC}.
On Figure \ref{{fixed_linear_MC} one can observe the shape of this exact solution, computed with $N = 400$ points.
}

 \begin{figure}[!ht]
\centering
\begin{tabular}{cc}
   \includegraphics[height=50mm]{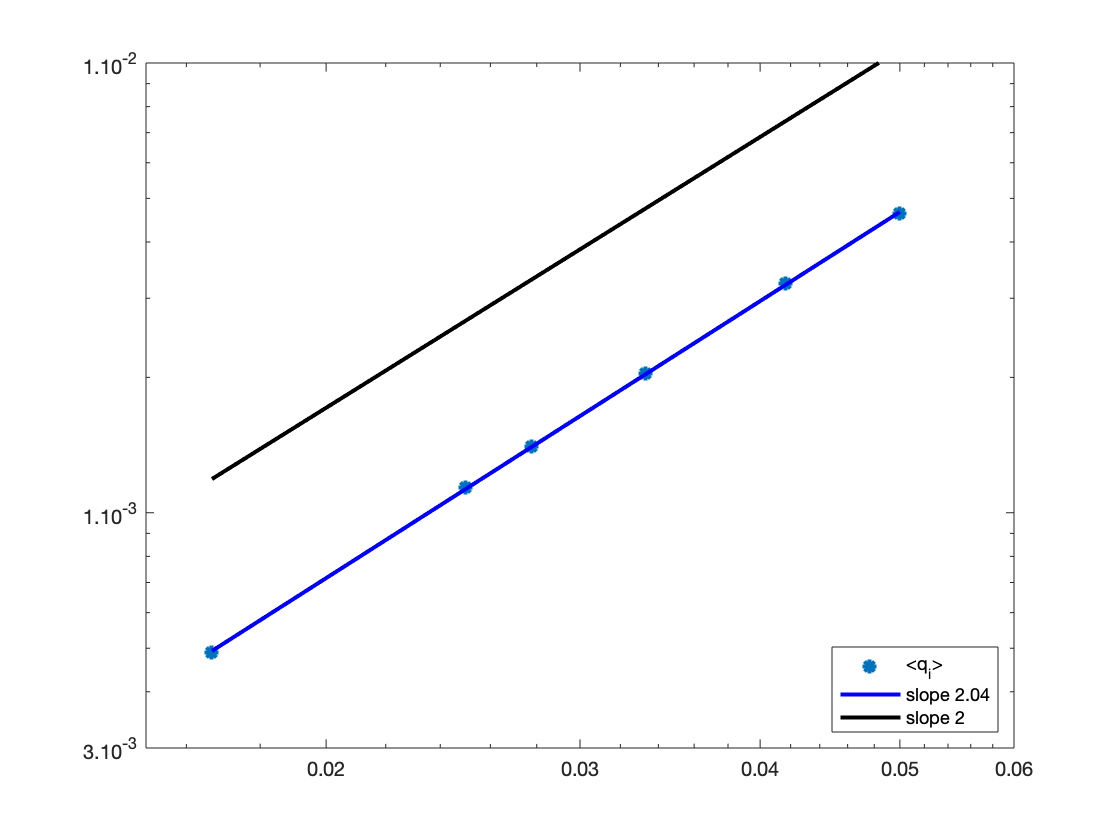}  & \includegraphics[height=50mm]{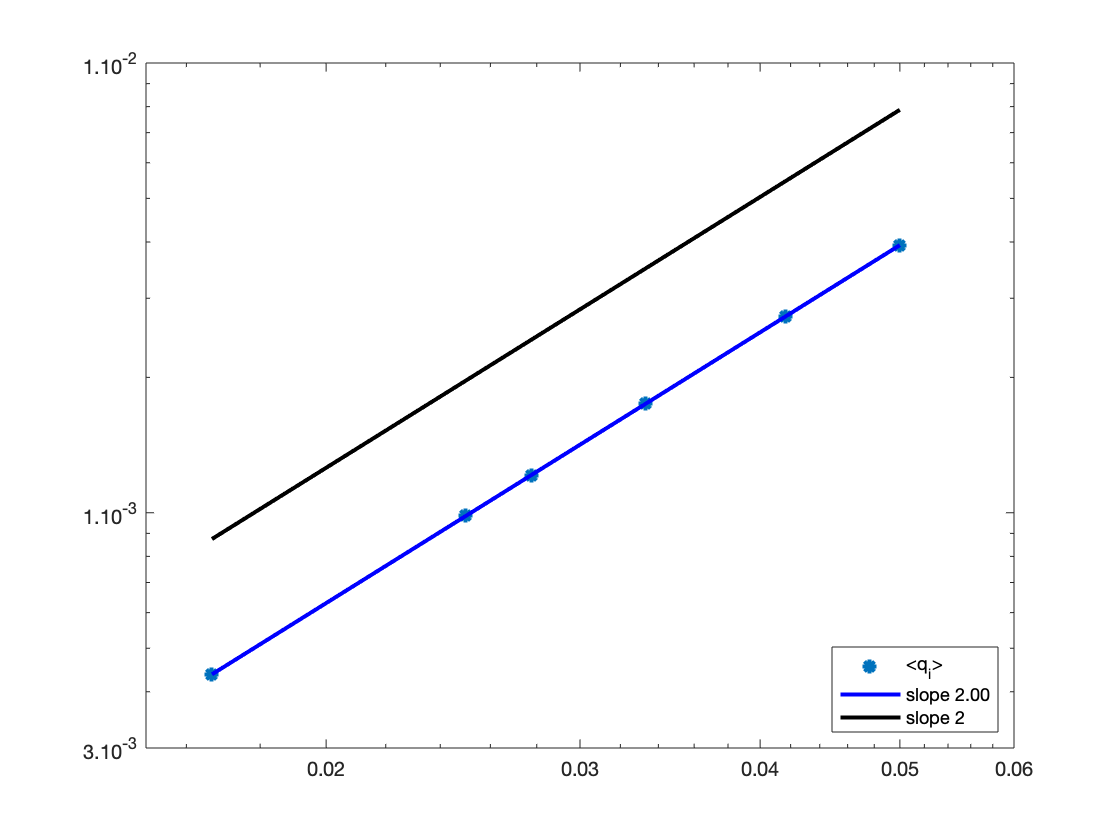}
\end{tabular}
  \caption{Interaction with a fixed object, linear case: convergence results for $\av{q_{\rm i}}$ with the second-order scheme, $\mu = 0.1$ (left) and $\mu = 0.3$ (right).}
  \label{fixed_linear_MC}
     \end{figure}

 \begin{figure}[!ht]
\centering
\begin{tabular}{cc}
   \includegraphics[height=50mm]{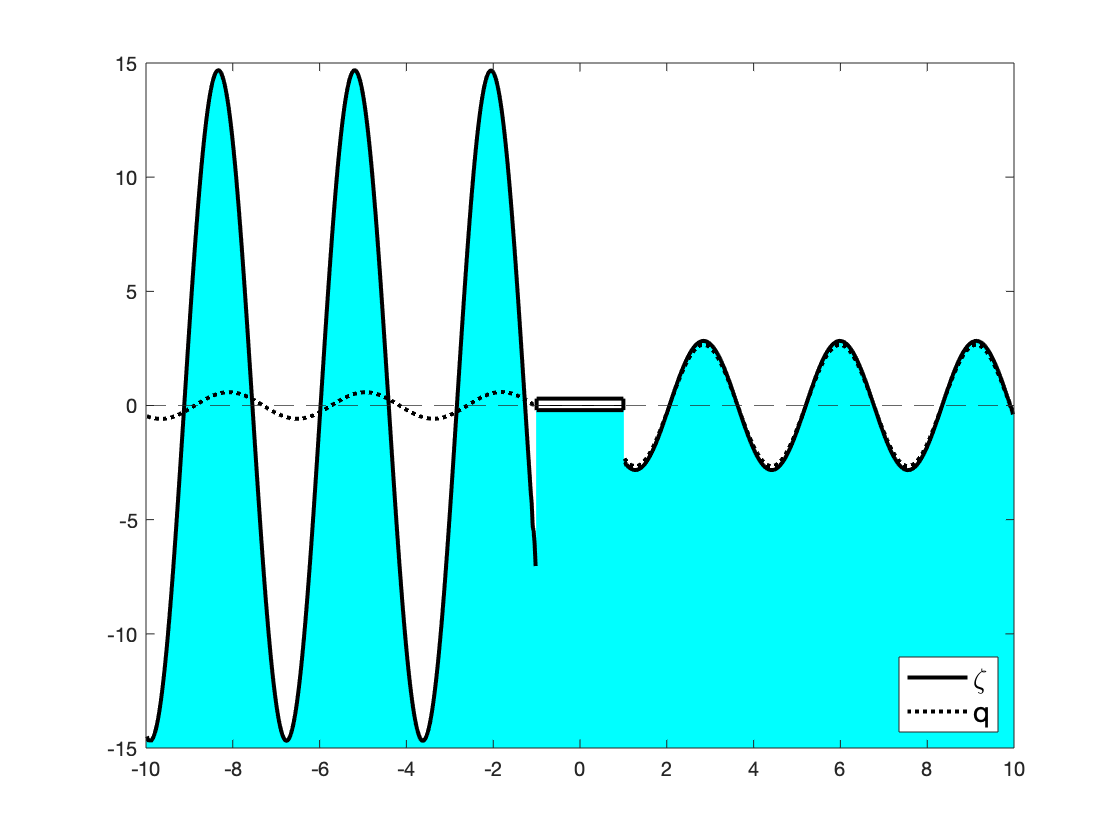}  & \includegraphics[height=50mm]{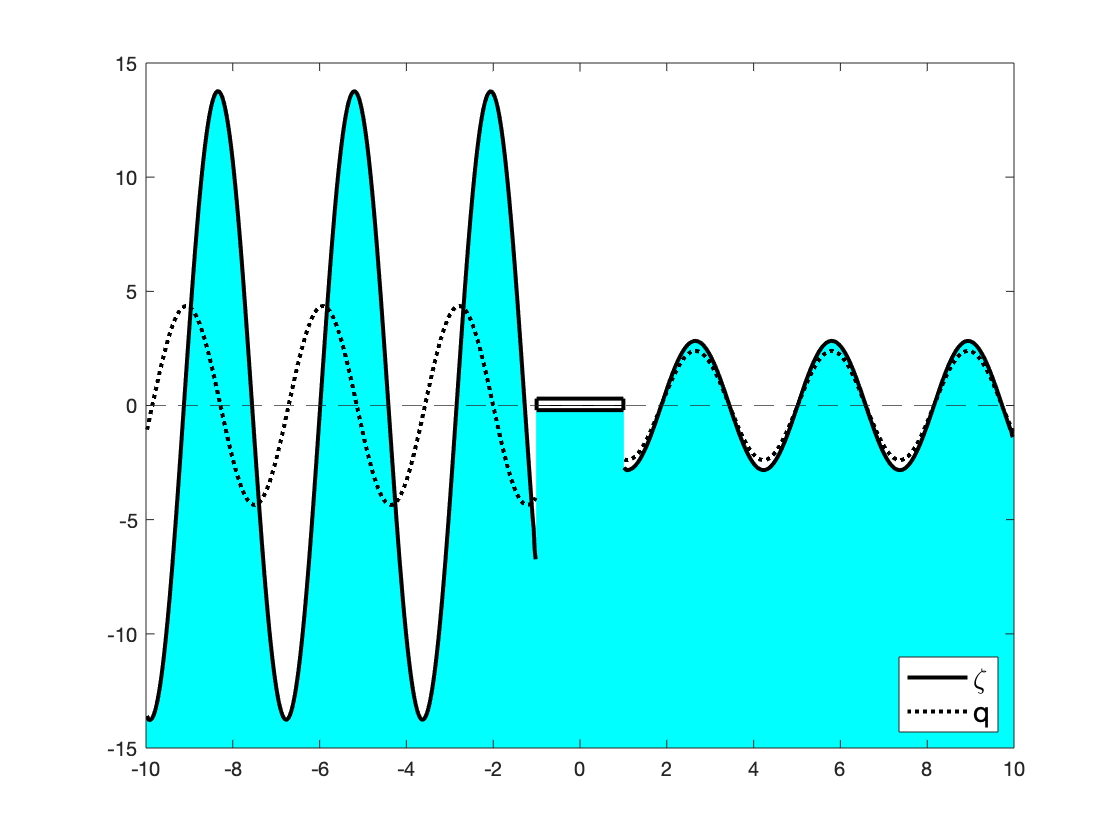}
\end{tabular}
  \caption{Interaction with a fixed object, linear case: profile of solutions, $\mu = 0.1$ (left) and $\mu = 0.3$ (right). 
  }
  \label{fixed_linear_MC}
     \end{figure}

\subsection{The nonlinear case}\label{sectNLfix}
\textcolor{red}{TO DO: Un exemple nonlineiare par exemple onde solitaire qui arrive sur l'objet ? Comme ca on reprend le meme avec l'objet qui peut bouger et ca permet de compararer visuellement les deux configurations (obet fixe/flottant) ?  Convergence en maillage sur $\av{q_{\rm i}}$ pour montrer l'ordre... J'imagine qu'il y aura une onde transmise et une onde reflechie. On pourrait faire une convergence en maillage sur zeta sur deux fenetres localisées sur ces 2 ondes -- et long des bords pour eviter les zigouihouis. Mais ca montrerait qu'on a bien ordre 2 sur vagues transmises et reflechies...}

\textcolor{blue}{
In this test the initial condition is the solitary wave described in subsection \ref{sectWG}, with $\zeta_{\rm max} = 0.2$, centered at $x = -15$, at the left side of the fixed object. The size of the computational domain is $L = 30$. 
For this test the reference solution is computed with a very refined mesh: $N = 2400$.
 We chose $\epsilon = 0.3$, $\mu = 0.3$ or $\mu = 0.1$, $h_{eq} = 1-0.3$, $l = 4$.
 The space steps $\Delta_x$ were computed as $\Delta_x = (L-l)/(N+1)$, with $N=160,200,240,300,400$ for the first-order scheme and $N = 100,120,160$ for the second-order scheme.
  The meshes are defined so that the points of the coarse meshes always coincide with the points of the very refined mesh of the reference solution.
 The time step was computed as $\Delta_t = 0.7 \Delta_x$.
The numerical results at final time $T_f = 20$ computed with the first-order and the second-order schemes show respectively a first-order convergence, see Figure \ref{fixed_nonlinear_LF} and a second-order convergence, see Figure \ref{fixed_nonlinear_MC}.
On Figure \ref{fixed_nonlinear_MC_profile} one can observe the shape of the numerical solution, computed with $N = 400$ points.
}

 \begin{figure}[!ht]
\centering
\begin{tabular}{cc}
   \includegraphics[height=50mm]{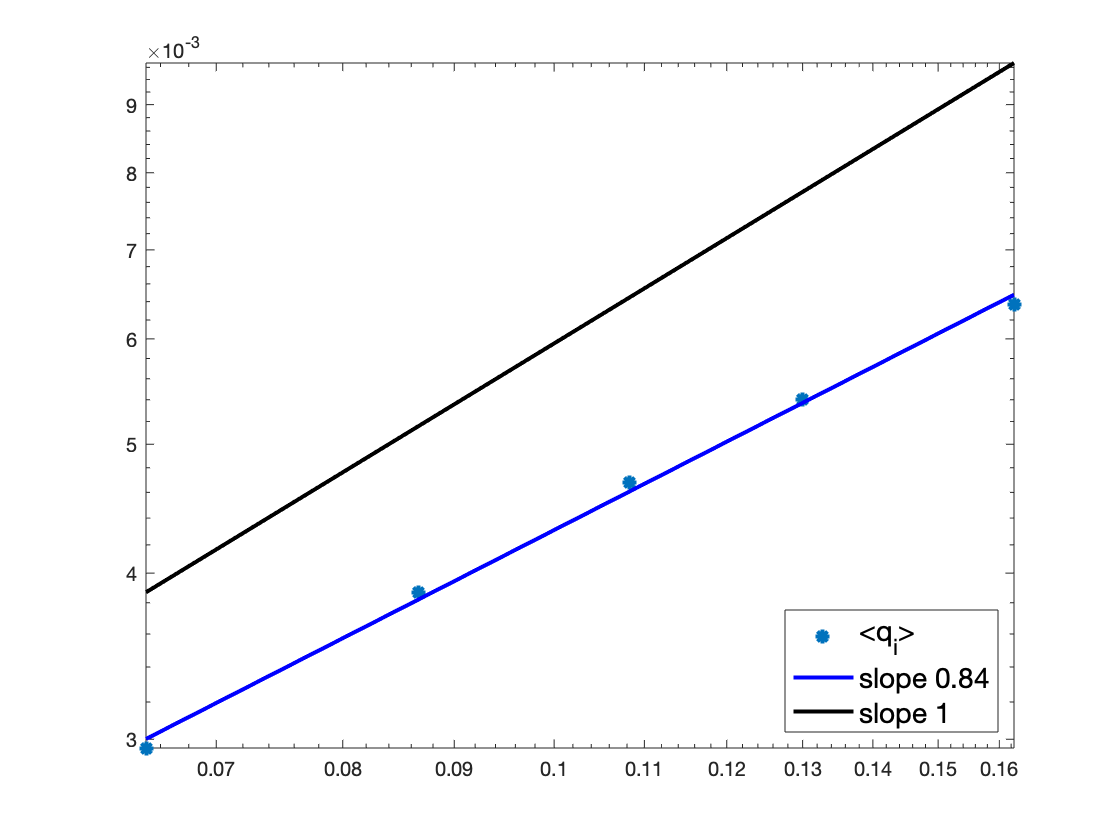}  & \includegraphics[height=50mm]{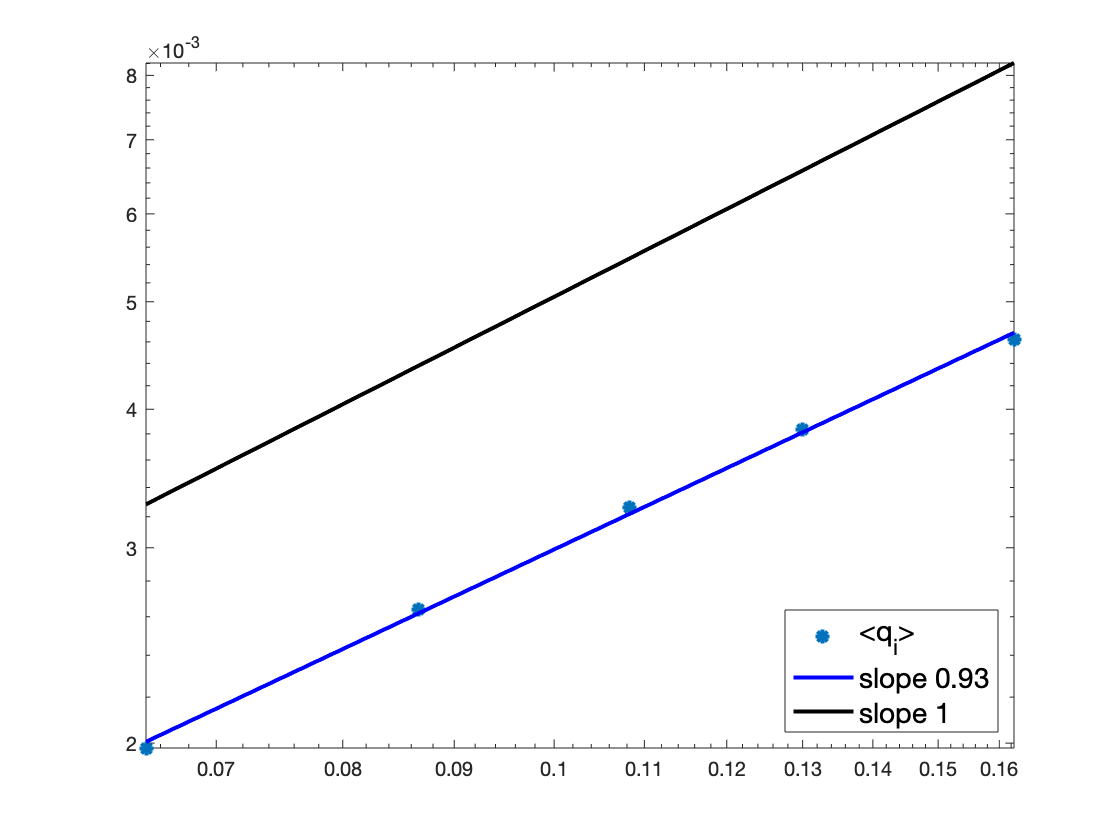}
\end{tabular}
  \caption{Interaction with a fixed object, non-linear case: convergence results for $\av{q_{\rm i}}$ with the first-order scheme, $\mu = 0.1$ (left) and $\mu = 0.3$ (right).}
  \label{fixed_nonlinear_LF}
     \end{figure}

 \begin{figure}[!ht]
\centering
\begin{tabular}{cc}
   \includegraphics[height=50mm]{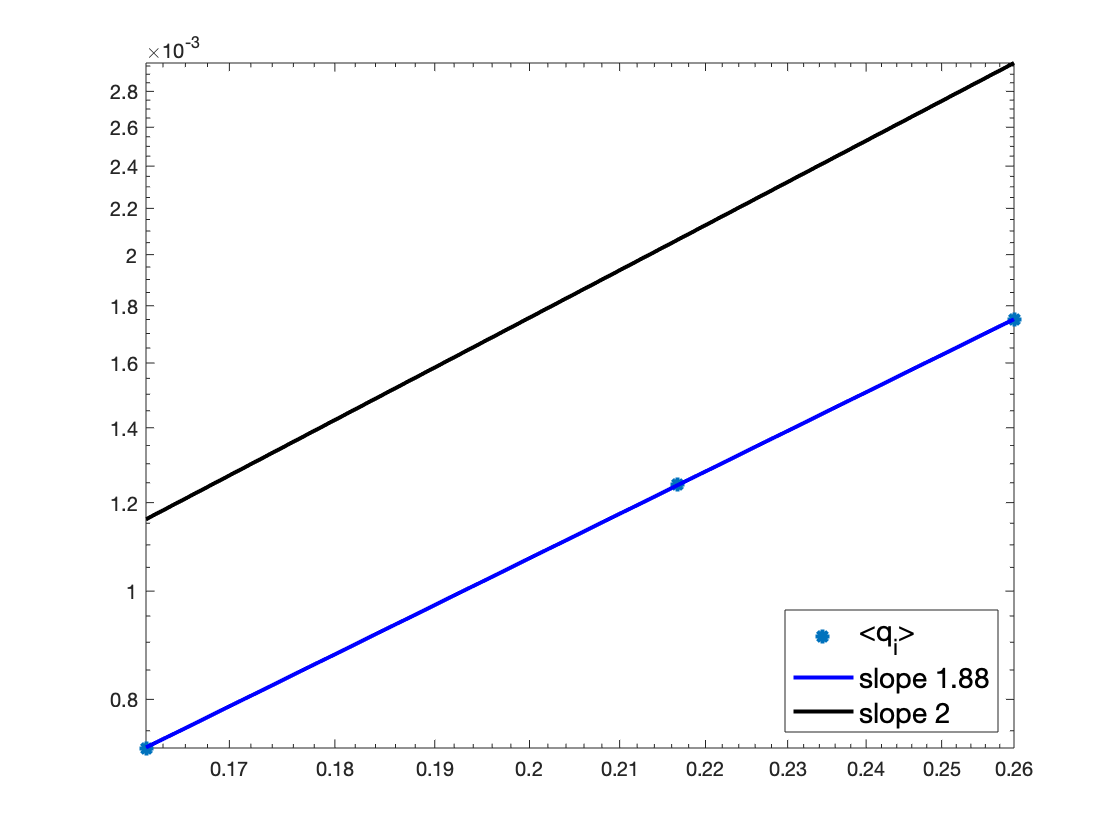}  & \includegraphics[height=50mm]{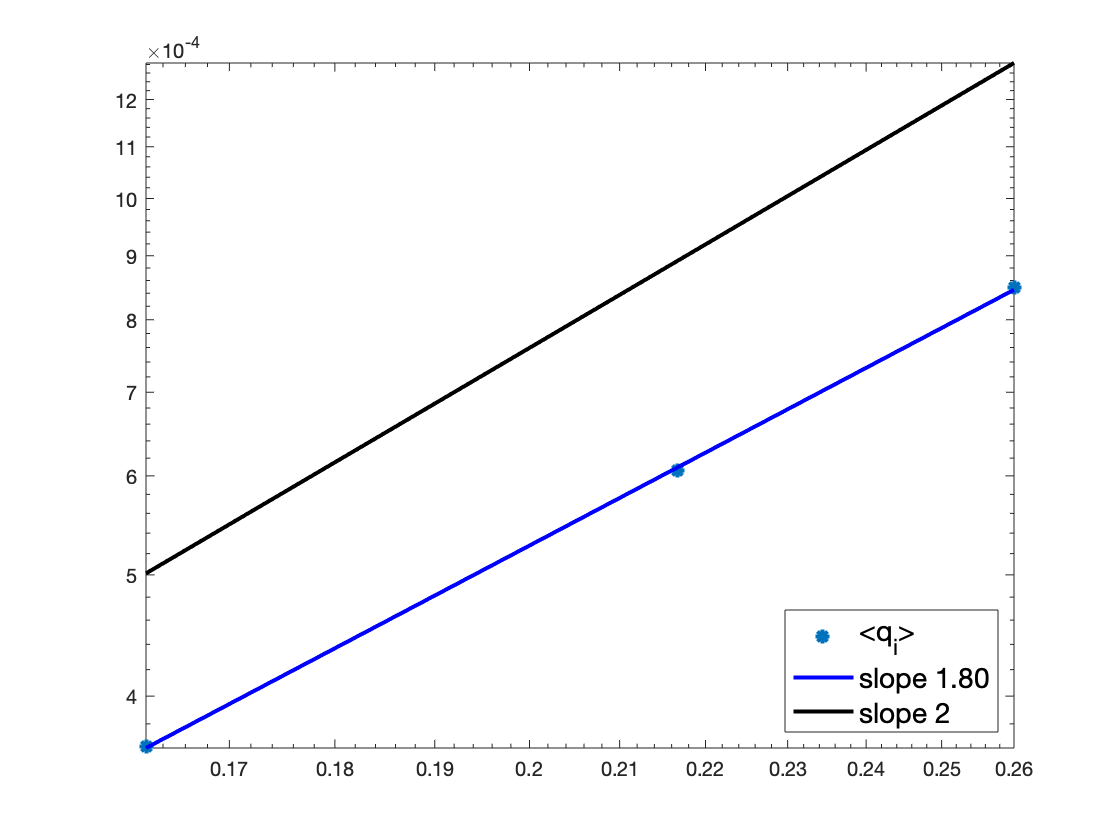}
\end{tabular}
  \caption{Interaction with a fixed object, non-linear case: convergence results for $\av{q_{\rm i}}$ with the second-order scheme, $\mu = 0.1$ (left) and $\mu = 0.3$ (right).}
  \label{fixed_nonlinear_MC}
     \end{figure}

 \begin{figure}[!ht]
\centering
\begin{tabular}{cc}
   \includegraphics[height=50mm]{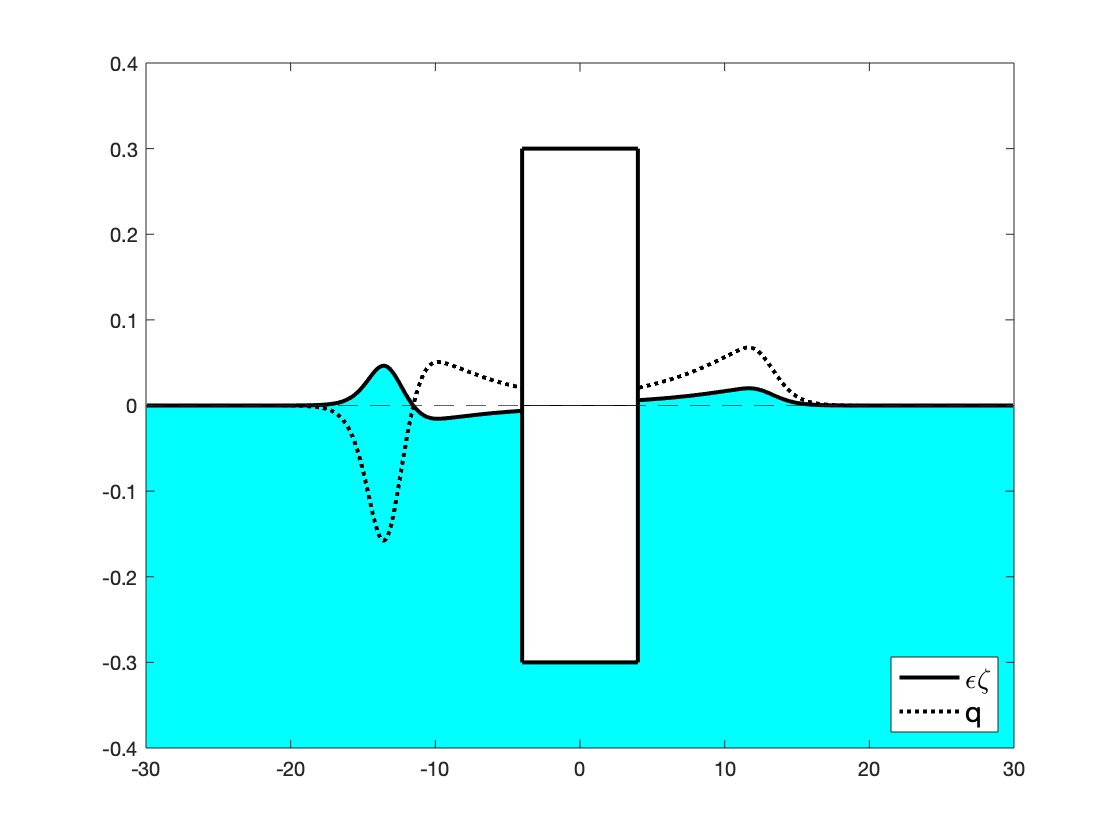}  & \includegraphics[height=50mm]{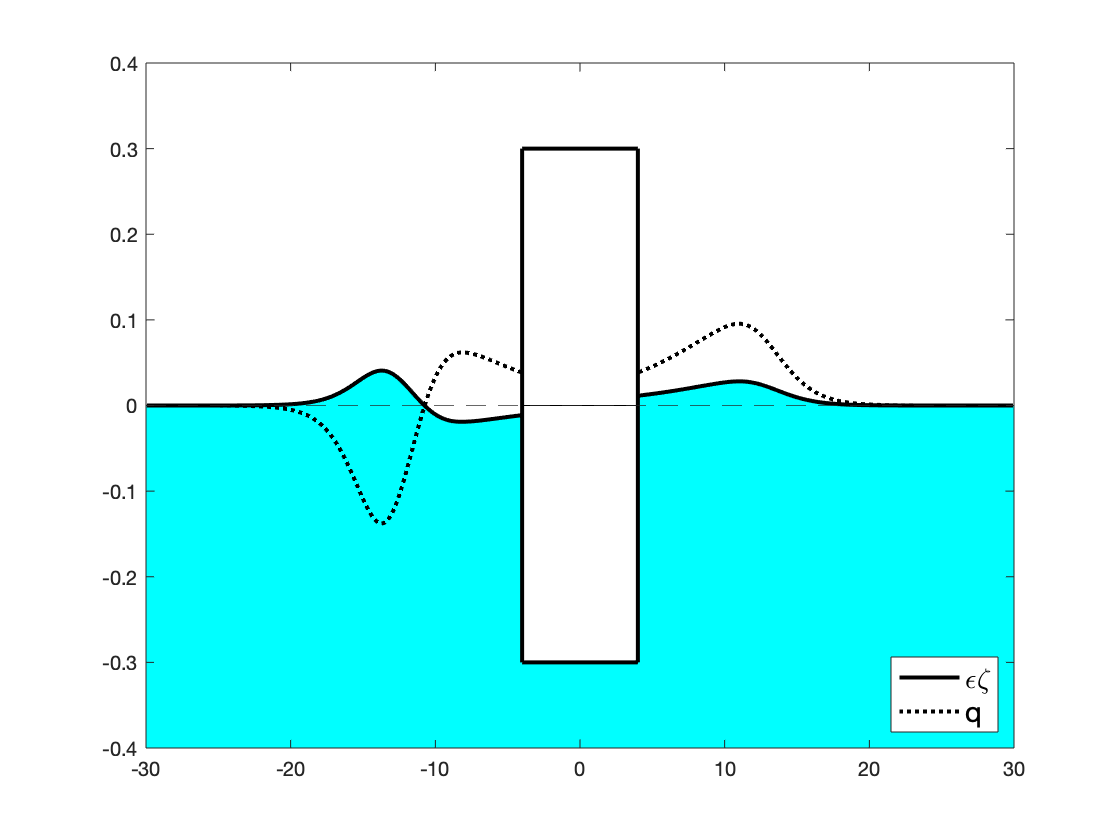}
\end{tabular}
  \caption{Interaction with a fixed object, non-linear case: profile of solutions, $\mu = 0.1$ (left) and $\mu = 0.3$ (right), computed with the second-order scheme and $N = 400$.}
  \label{fixed_nonlinear_MC_profile}
     \end{figure}

\subsection{Waves interacting with a freely floating object}\label{sectfreely}

In the general case, when the object moves under the influence of the waves and the waves are in return modified by the presence of the object, one has to consider the full augmented system presented in Proposition \ref{propIVP}.

\textcolor{red}{On reprend le meme cas qu'objet fixe ? Au moins CV maillage en $\av{q_{\rm i}}$ et $\delta$. Peut etre aussi sur ondes reflechies et transmises ?}

\textcolor{blue}{
\textcolor{blue}{
The initial condition is the same as in the previous subsection: we take the solitary wave of subsection \ref{sectWG}, with $\zeta_{\rm max} = 0.2$, centered at $x = -15$, at the left side of the fixed object. The size of the computational domain is $L = 30$. 
For this test the reference solution is computed with a very refined mesh: $N = 2400$.
 We chose $\epsilon = 0.3$, $\mu = 0.3$ or $\mu = 0.1$, $h_{eq} = 1-0.3$, $l = 4$.
For this test the reference solution is computed with a very refined mesh: $N = 2400$.
 We chose $\epsilon = 0.3$, $\mu = 0.3$ or $\mu = 0.1$, $h_{eq} = 1-0.3$, $l = 4$ and the size of the computational domain $L = 30$.
 The space steps $\Delta_x$ were computed as $\Delta_x = (L-l)/(N+1)$, with $N=240,300,400$ for the first-order scheme and $N = 100,120,160$ for the second-order scheme.
  The meshes are defined so that the points of the coarse meshes always coincide with the points of the very refined mesh of the reference solution.
 The time step was computed as $\Delta_t = 0.7 \Delta_x$.
The numerical results at final time $T_f = 20$ computed with the first-order and the second-order schemes show respectively a first-order convergence, see Figures \ref{general_LF_delta} and \ref{general_LF_qi}, and a second-order convergence, see Figures \ref{general_MC_delta} anf  \ref{general_MC_qi}.
On Figure \ref{general_MC_profile} one can observe the shape of the numerical solution, computed with $N = 400$ points.
A comparison between Figure \ref{general_MC_profile} and Figure \ref{fixed_nonlinear_profile} shows that the profiles of the reflected and transmitted waves differ.
}

\begin{figure}[!ht]
\centering
\begin{tabular}{cc}
   \includegraphics[height=50mm]{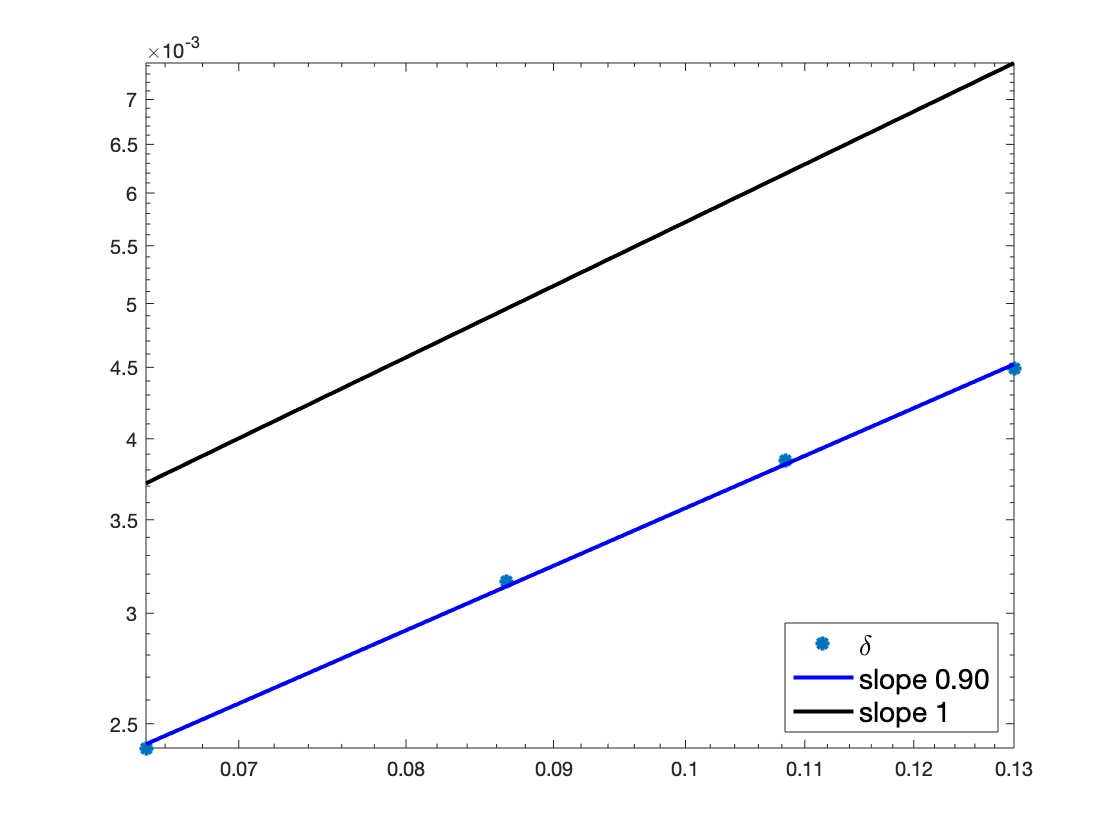}  & \includegraphics[height=50mm]{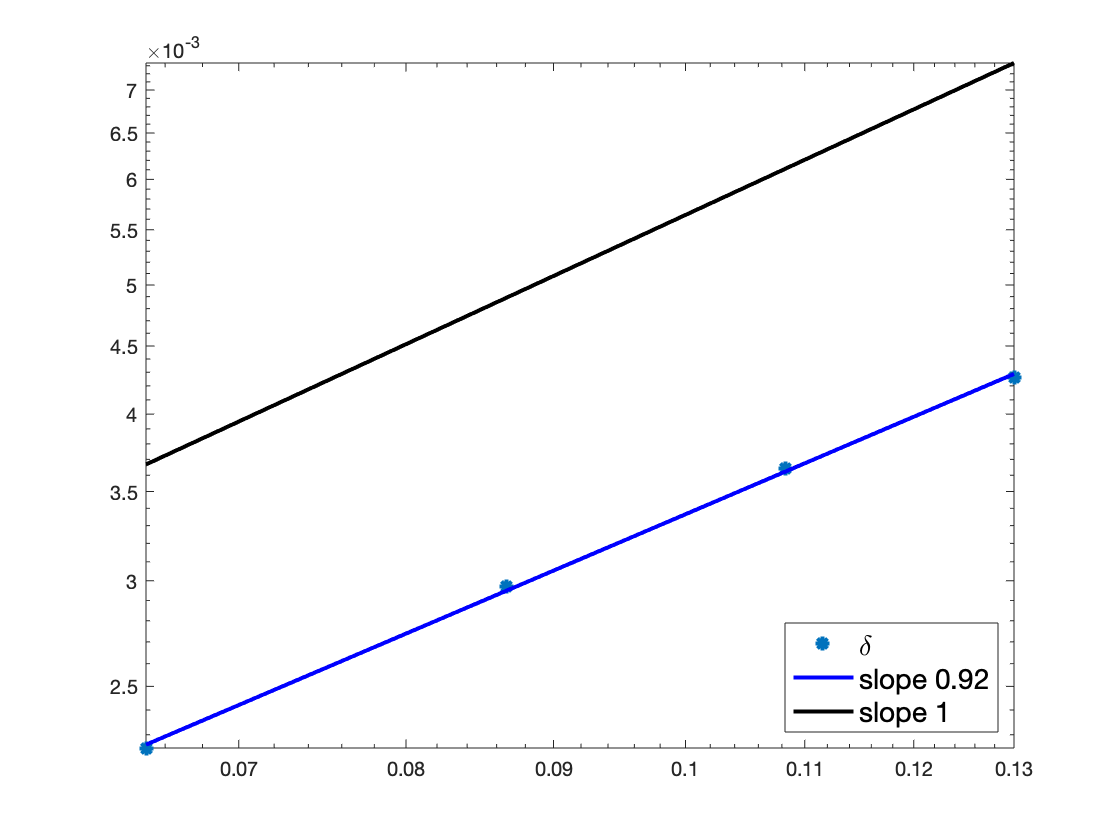}
\end{tabular}
  \caption{Interaction with a freely floating object, non-linear case: convergence results for $\delta$ with the first-order scheme, $\mu = 0.1$ (left) and $\mu = 0.3$ (right).}
  \label{general_LF_delta}
     \end{figure}

\begin{figure}[!ht]
\centering
\begin{tabular}{cc}
   \includegraphics[height=50mm]{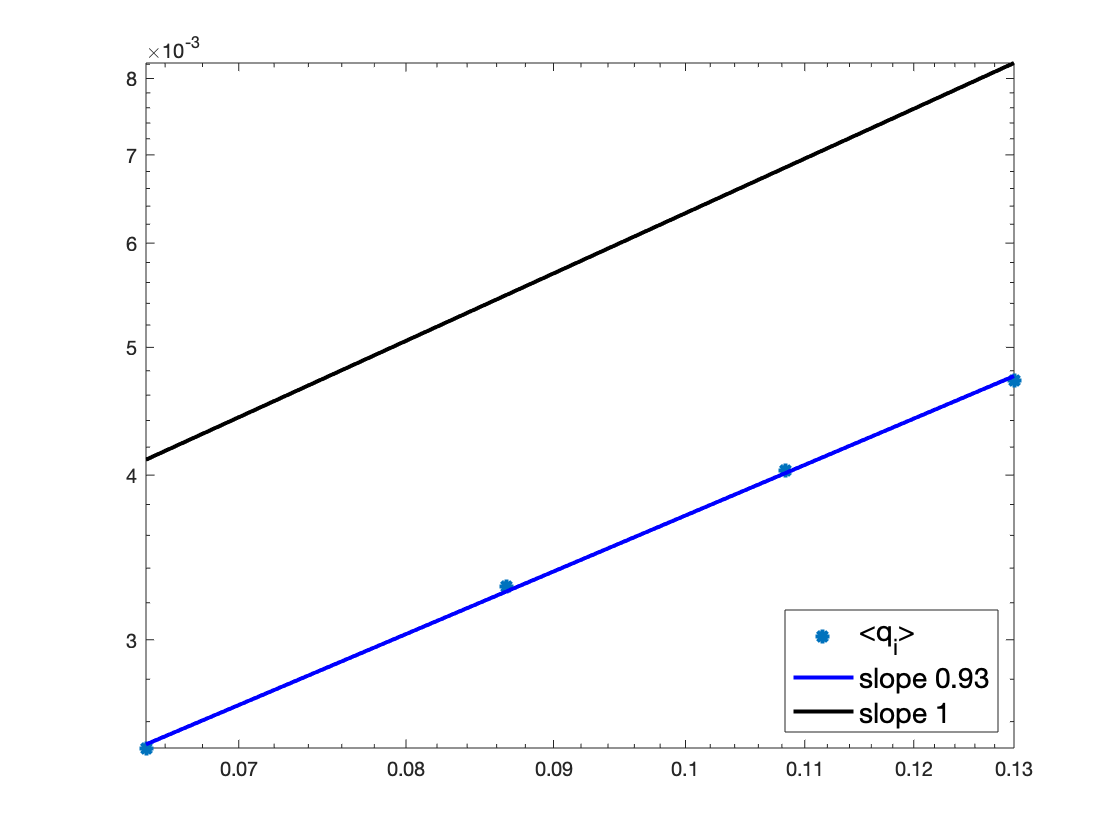}  & \includegraphics[height=50mm]{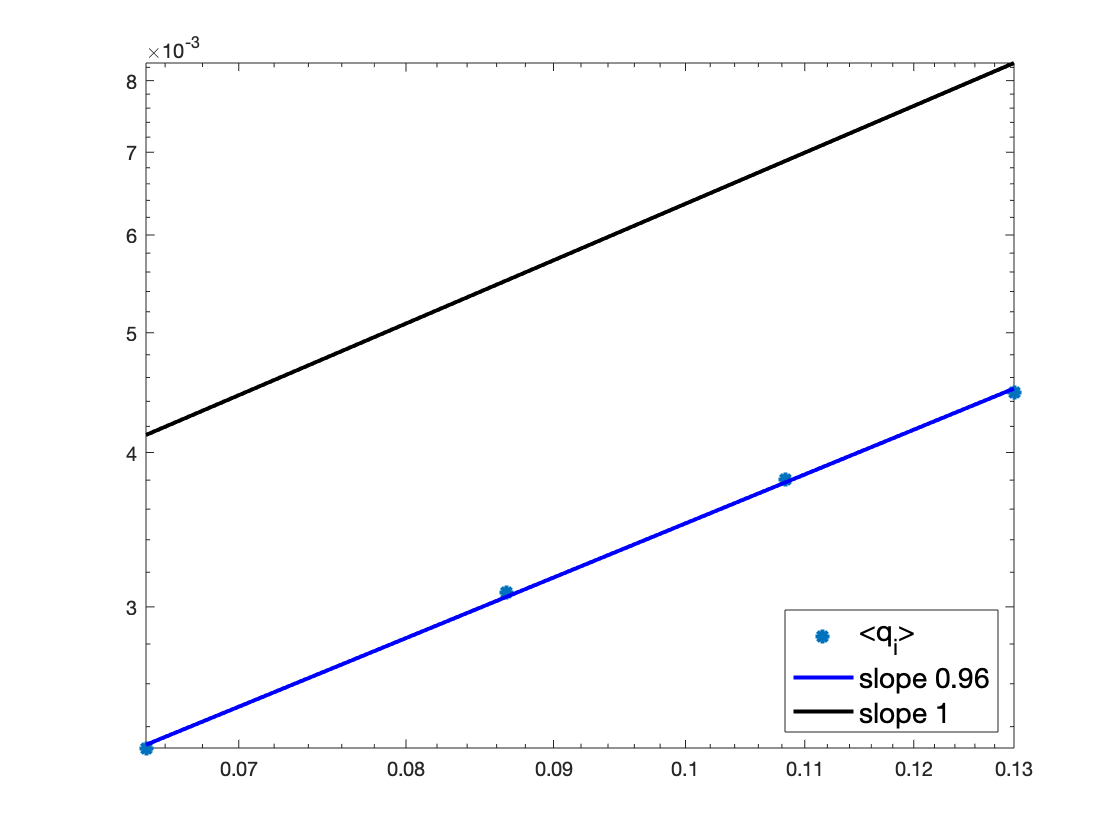}
\end{tabular}
  \caption{Interaction with a freely floating object, non-linear case: convergence results for $\av{q_{\rm i}}$ with the first-order scheme, $\mu = 0.1$ (left) and $\mu = 0.3$ (right).}
  \label{general_LF_qi}
     \end{figure}

 \begin{figure}[!ht]
\centering
\begin{tabular}{cc}
   \includegraphics[height=50mm]{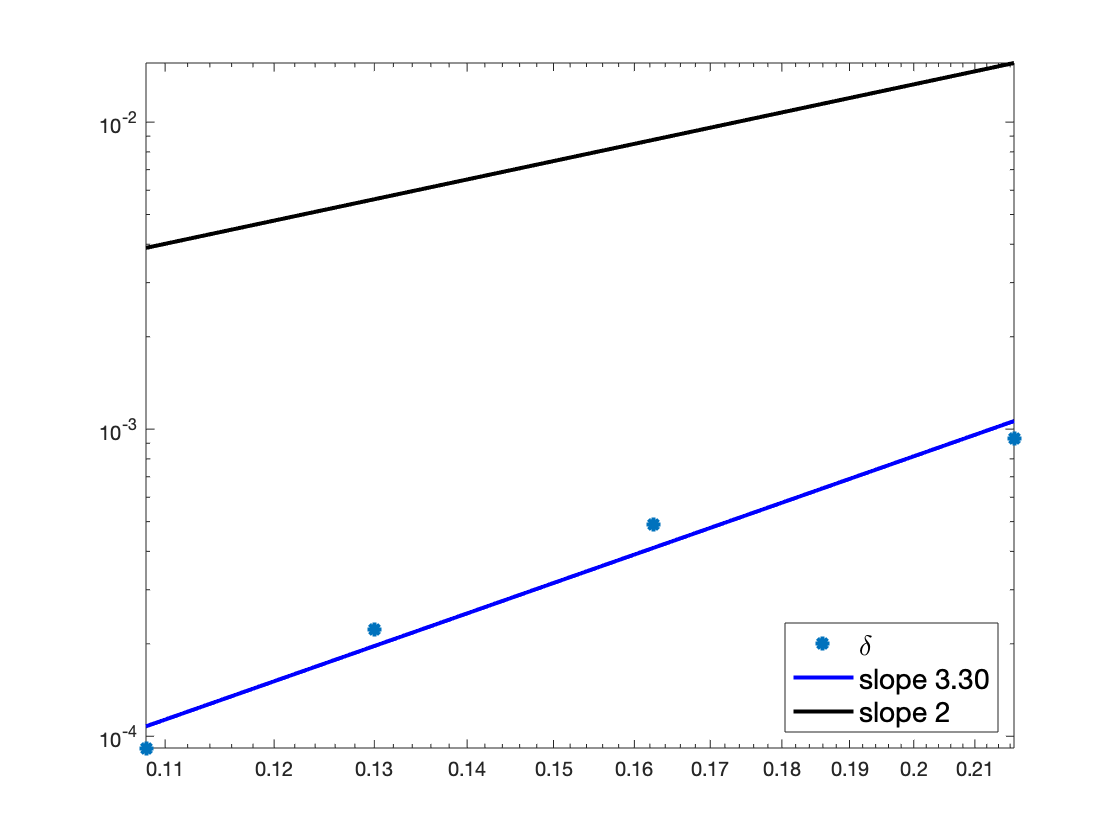}  & \includegraphics[height=50mm]{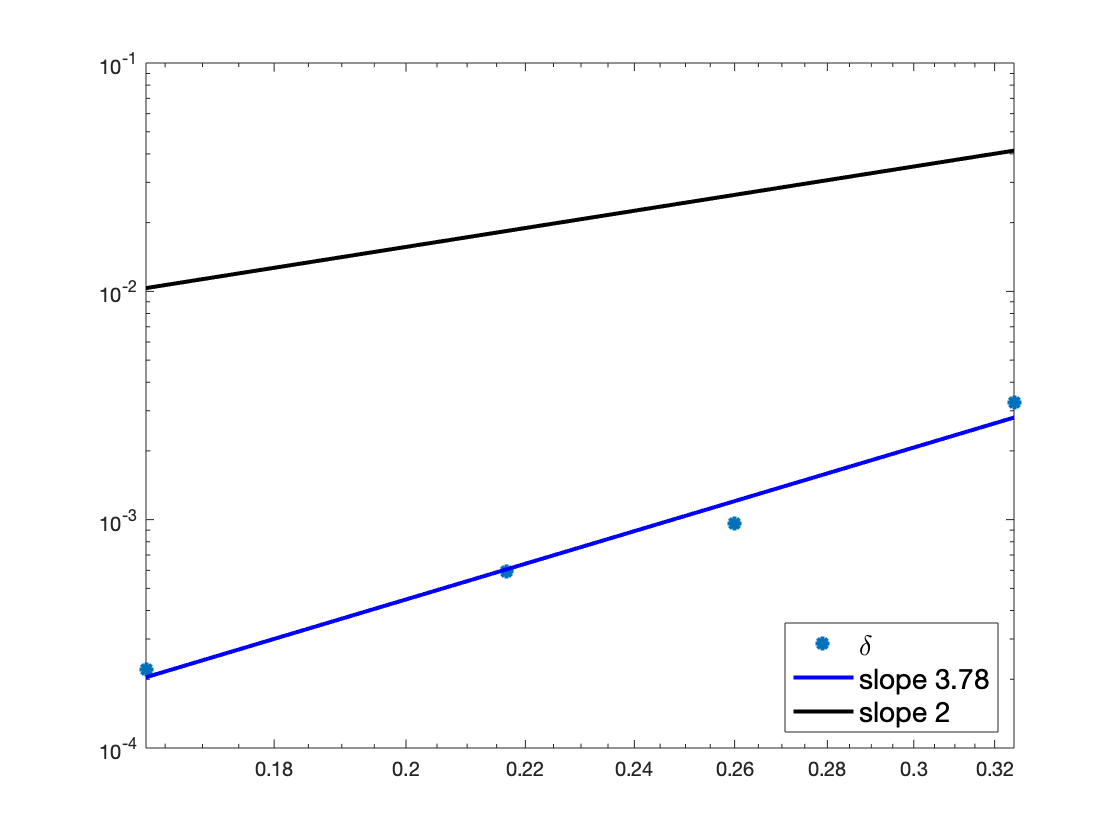}
\end{tabular}
  \caption{Interaction with a freely floating object, non-linear case: convergence results for $\delta$ with the second-order scheme, $\mu = 0.1$ (left) and $\mu = 0.3$ (right), computed with the second-order scheme and $N = 400$.}
  \label{general_MC_delta}
     \end{figure}

 \begin{figure}[!ht]
\centering
\begin{tabular}{cc}
   \includegraphics[height=50mm]{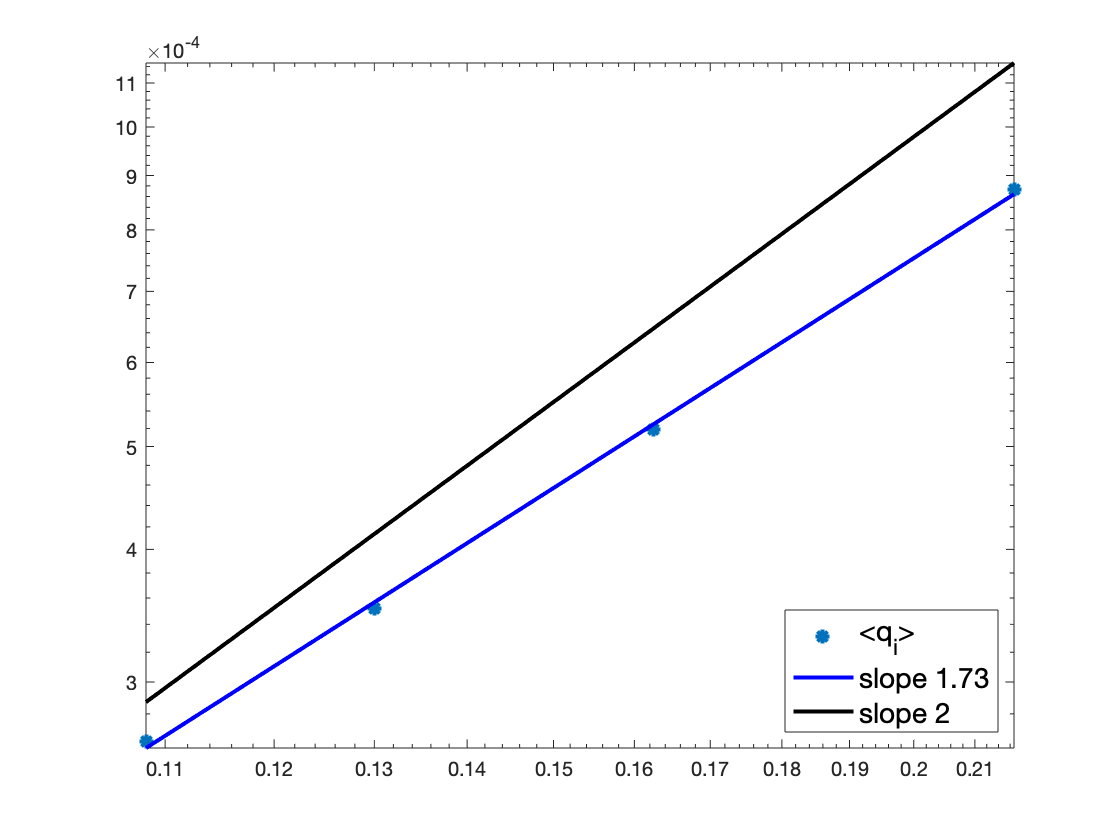}  & \includegraphics[height=50mm]{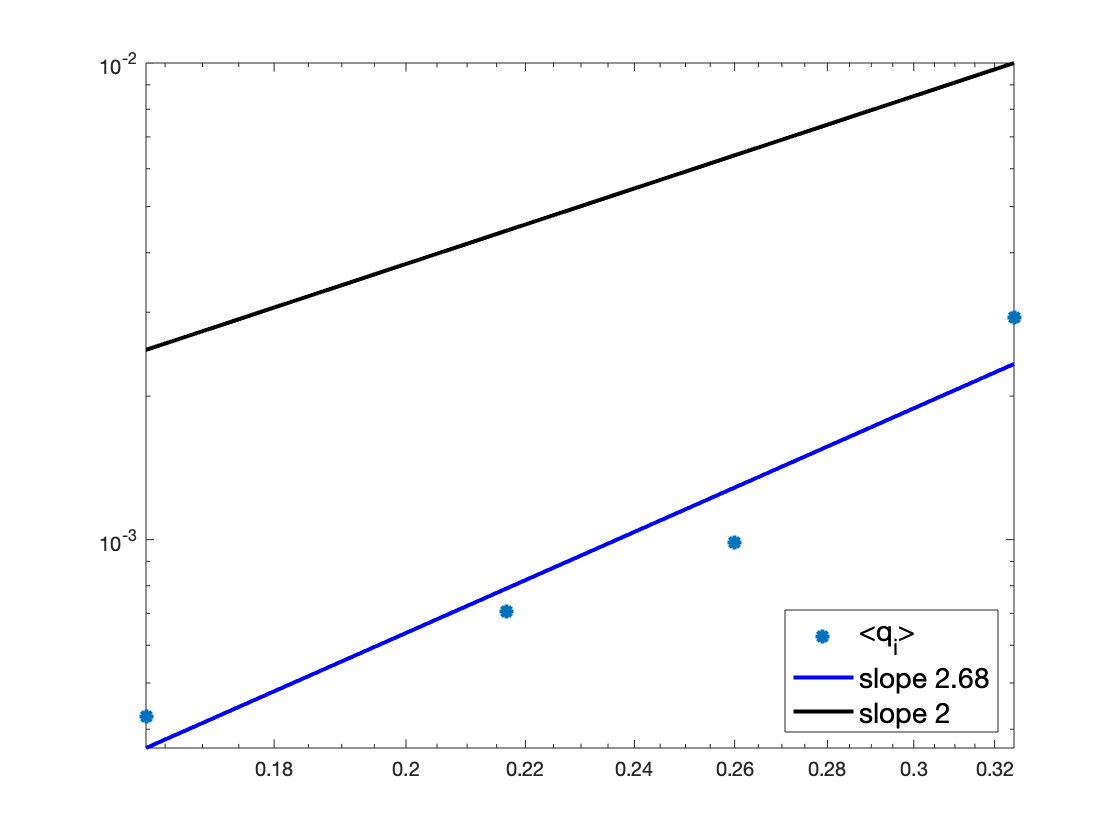}
\end{tabular}
  \caption{Interaction with a freely floating object, non-linear case: convergence results for $\av{q_{\rm i}}$ with the second-order scheme, $\mu = 0.1$ (left) and $\mu = 0.3$ (right).}
  \label{general_MC_qi}
     \end{figure}

 \begin{figure}[!ht]
\centering
\begin{tabular}{cc}
   \includegraphics[height=50mm]{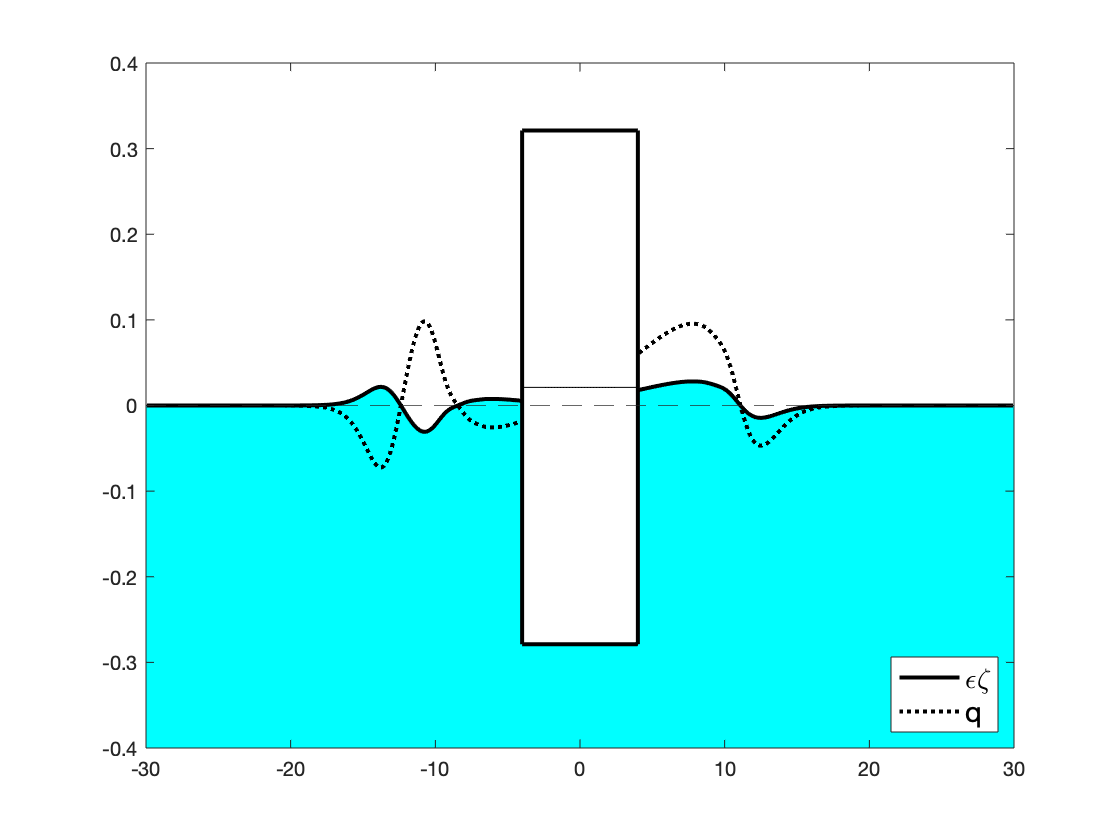}  & \includegraphics[height=50mm]{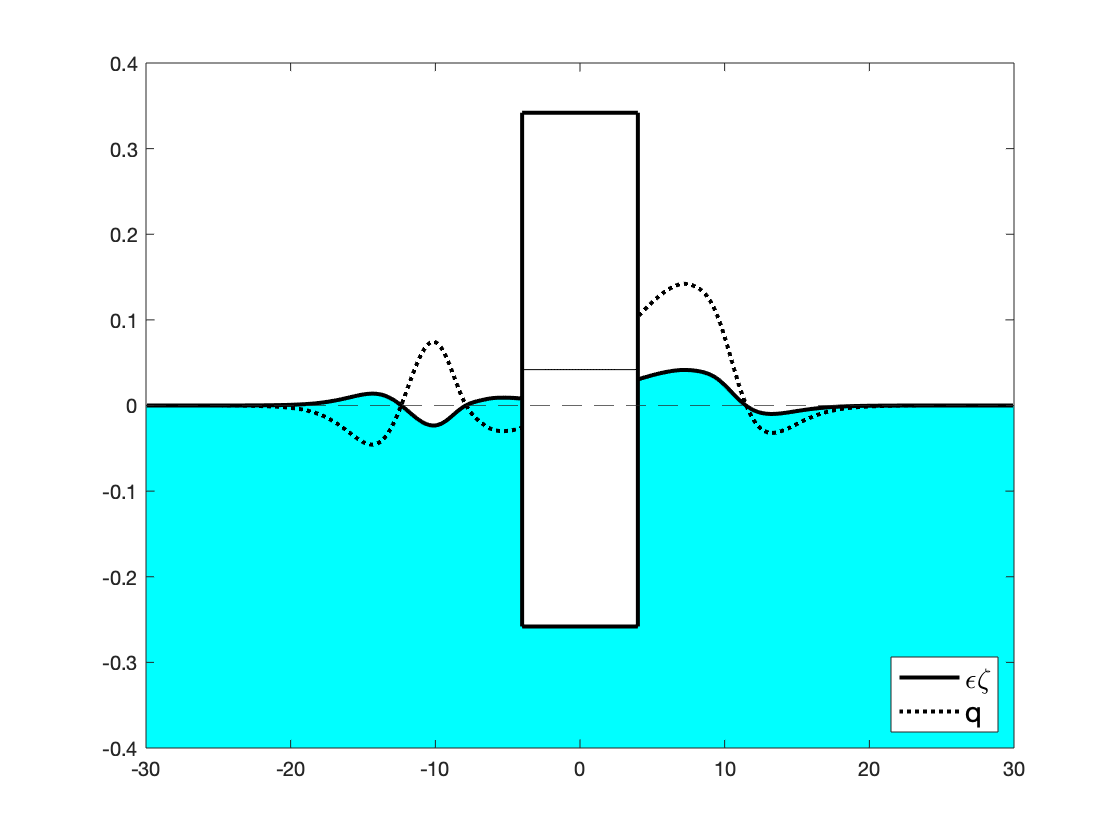}
\end{tabular}
  \caption{Interaction with a freely floating object, non-linear case: profile of solutions, $\mu = 0.1$ (left) and $\mu = 0.3$ (right).}
  \label{general_MC_profile}
     \end{figure}

\appendix

\section{Exact expressions of the quantities involved in the wave-structure models}\label{appcoeff}

We have  shown in \S \ref{sectaugmform} that the wave-structure interaction equations can be formulated as a transmission problem between the two components $\cE^-=(-\infty,-\ell)$ and $\cE^+=(\ell,\infty)$ of the fluid domain, with transmission conditions involving the vertical displacement $\delta$ of the object and the average horizontal discharge $\av{q_{\rm i}}$ under the object; we have also shown  that, in the augmented formulation, $\delta$ and $\av{q_{\rm i}}$ are found through the resolution of the  $7$-dimensional ODE  \eqref{ODEaugm} on $\Theta:=\big(\av{q_{\rm i}},\dot\delta,\dtzetap,\dtzetam,\delta,\tzetap,\tzetam\big)^{\rm T}$ that we wrote in abstract form as 
\begin{equation}\label{ODEaugmapp}
\frac{d}{dt}\Theta={\mathcal G}\big(\Theta, (R_1\mfsw)_+,(R_1\mfsw)_-,F_{\rm ext}\big),
\end{equation}
where ${\mathcal G}$ is a smooth function of its arguments. The goal of this section is to derive the explicit expression of the mapping ${\mathcal G}$ which is used for our numerical computations. We first provide in \S \ref{appexpcoeff} the explicit expression of various coefficients that appear in the wave-structure equations and then derive in \S \ref{appGC} the explicit expression of the mapping ${\mathcal G}$ in the most general case. We then point out  the simplifications that can be performed when the object is in fixed or forced motion (in \S \ref{appfixforced}) and when the system has a symmetry with respect to the vertical axis $\{x=0\}$ (in \S \ref{simplsym} ).

\medbreak

 \noindent
 {\bf N.B.} We recall that for the sake of simplicity, we assume throughout this article that $h_{\rm eq}(x)$ is an even function.

\subsection{Explicit expressions of some coefficients}\label{appexpcoeff}
We did not made explicit in the main text of the article most of the constants that appear in the wave-structure interaction equations studied in this paper and derived in \cite{BeckLannes} because they were not relevant for the mathematical and numerical analysis of these equations. Of course, they are necessary for realistic simulations of wave-structure interactions and we provide them here. 
Let us first remind that the  configuration under consideration is a floating object with vertical sidewalls located at $x=\pm \ell$ and that can only move in the vertical direction. In dimensionless variables, we denote by $h_{\rm eq}(x)$ the water depth below the object at equilibrium and by $\eps\delta(t)$ the displacement of the object at time $t$ from its equilibrium position, so that that the water depth under the object at time $t$ is $h_{\rm eq}(x)+\eps\delta(t)$. The dimensionless mass $m$ of the object can be defined through Archimedes' principle,
$$
m=\frac{1}{2\ell} \int_{-\ell}^\ell (1-h_{\rm eq})
$$
and  the formulas below will also involve two scalar functions $\alpha(\eps\delta)$ and  $\beta(\eps\delta)$ defined as
\begin{align*}
\alpha(\eps\delta)=\frac{1}{2\ell}\int_{-\ell}^\ell \frac{1}{h_{\rm eq}(x)+\eps\delta}{\rm d}x
\quad\mbox{ and }\quad
\beta(\eps\delta)=\frac{1}{2}\frac{1}{2\ell}\int_{-\ell}^\ell \frac{x^2}{(h_{\rm eq}(x)+\eps\delta)^2}{\rm d}x;
\end{align*}
the quantities $\tau_\kappa(\eps\delta)^2$ that appears in Newton's equation is given by
$$
\tau_\kappa(\eps\delta)^2=3\kappa^2 m +\frac{1}{2\ell}\int_{-\ell}^\ell \frac{x^2}{h_{\rm eq}+\eps\delta}{\rm d}x+\frac{1}{3}\mu \av{\frac{1}{h_{\rm eq}+\eps\delta}}.
$$
\subsection{The general case}\label{appGC}

As seen in \eqref{GencoupleODE}, the first four components of $\Theta$ satisfy
\begin{equation}\label{GencoupleODEapp}
{\mathcal M}[\eps\delta,\eps\tzetapm]
\frac{d}{dt}\begin{pmatrix} \dsp  \av{q_{\rm i}} \\  \dsp \dot\delta \\ \dsp \dtzetap  \\ \dsp  \dtzetam \end{pmatrix}
+ \begin{pmatrix} \dsp \frac{1}{2\ell} \jump{\tzeta}\\  \dsp \delta-\av{\tzeta} \\ \dsp  \tzetap  \\ \dsp   \tzetam  \end{pmatrix}
= \eps {\boldsymbol {\mathfrak Q}}[\eps\delta,\eps\tzetapm](\av{q_{\rm i}},\dot\delta,\tzetapm)
+ \begin{pmatrix}  0 \\ F_{\rm ext} \\\dsp  ({R}_1 \mfsw)_+ \\ \dsp  ({R}_1 \mfsw)_-  \end{pmatrix}
\end{equation}
where $\av{\tzeta}:=\frac{1}{2}(\tzetap+\tzetam)$, $\jump{\tzeta}:=\zetap-\tzetam$, and ${\mathcal M}[\eps\delta,\eps\tzetapm]$ is the invertible matrix
\begin{equation}\label{defMtotapp}
{\mathcal M}[\eps\delta,\eps\tzetapm]:=
\left(
\begin{array}{cc|cc}
\alpha(\eps\delta)& 0  & \frac{\kappa^2}{2 \ell }\frac{1}{\uh_+} &  - \frac{\kappa^2}{2 \ell }\frac{1}{\uh_-} \\ 
0 & \tau_\kappa(\eps\delta)^2 & -  \frac{1}{2} \frac{\kappa^2}{\uh_+} & -  \frac{1}{2} \frac{\kappa^2}{\uh_-} \\ \hline
-{\kappa} & \ell \kappa & \kappa^2 &0  \\
\kappa & \ell \kappa & 0 & \kappa^2 
\end{array}
\right),
\end{equation}
with $\uh_\pm=1+\eps\tzeta_\pm$; simple computations also show that the quadratic term ${\boldsymbol {\mathfrak Q}}[\eps\delta,\eps\tzetapm]$ is of the form
$$
{\boldsymbol {\mathfrak Q}}[\eps\delta,\eps\tzetapm]=\big( {\mathfrak Q}_{\rm i}[\eps\delta,\eps\tzetapm],{\mathfrak Q}_{\delta}[\eps\delta,\eps\tzetapm],{\mathfrak Q}_+[\eps\delta,\eps\tzetapm],{\mathfrak Q}_-[\eps\delta,\eps\tzetapm]\big)^{\rm T}
$$
with (writing simply ${\mathfrak Q}_{\rm i}={\mathfrak Q}_{\rm i}[\eps\delta,\eps\tzetapm]$, etc.)
\begin{align*}
{\mathfrak Q}_{\rm i}(\av{q_{\rm i}},\dot\delta,\tzetapm)
&=-\alpha'(\eps\delta)\av{q_{\rm i}}\dot\delta-\frac{1}{4\ell}\big[ \Big( \frac{-\ell\dot\delta+\av{q_{\rm i}}}{\uh_+}\Big)^2-\Big(\frac{\ell\dot\delta+\av{q_{\rm i}}}{\uh_-}\Big)^2\big], \\
{\mathfrak Q}_{\delta}(\av{q_{\rm i}},\dot\delta,\tzetapm)&=
\beta(\eps\delta)\dot\delta^2+\frac{1}{2}\alpha'(\eps\delta)\av{q_{\rm i}}^2+\frac{1}{4}\big[ \Big( \frac{-\ell\dot\delta+\av{q_{\rm i}}}{\uh_+}\Big)^2+\Big(\frac{\ell\dot\delta+\av{q_{\rm i}}}{\uh_-}\Big)^2\big],\\
{\mathfrak Q}_{+}(\av{q_{\rm i}},\dot\delta,\tzetapm)&=\frac{1}{2} \tzetap^2-\frac{1}{\uh_+}\big(-\ell\dot\delta+\av{q_{\rm i}}\big)^2,\\
{\mathfrak Q}_{-}(\av{q_{\rm i}},\dot\delta,\tzetapm)&= \frac{1}{2}\tzetam^2-\frac{1}{\uh_-}\big(\ell\dot\delta+\av{q_{\rm i}}\big)^2.
\end{align*}
The matrix ${\mathcal M}$ is a $4\times 4$ matrix whose inverse is quite complicated; we therefore tranform it into a block-triangular matrix by multiplying
\eqref{GencoupleODEapp} by the matrix
$$
\left(
\begin{array}{cc|cc}
 1 & 0  & -\frac{1}{2 \ell }\frac{1}{\uh_+} &   \frac{1}{2 \ell }\frac{1}{\uh_-} \\ 
0 & 1   &   \frac{1}{2} \frac{1}{\uh_+} &   \frac{1}{2} \frac{1}{\uh_-} \\ \hline
0& 0& 1 &0  \\
0 & 0 & 0 & 1
\end{array}
\right);
$$
the resulting equation takes the form
\begin{equation}\label{GencoupleODEapp2}
\widetilde{\mathcal M}[\eps\delta,\eps\tzetapm]
\frac{d}{dt}\begin{pmatrix} \dsp  \av{q_{\rm i}} \\  \dsp \dot\delta \\ \dsp \dtzetap  \\ \dsp  \dtzetam \end{pmatrix}
+ \begin{pmatrix} \dsp 0\\  \dsp \delta \\ \dsp  \tzetap  \\ \dsp   \tzetam \end{pmatrix}
= \eps \widetilde{\boldsymbol {\mathfrak Q}}(\av{q_{\rm i}},\dot\delta,\tzetapm)
+ \begin{pmatrix}  \dsp -\frac{1}{2\ell} \jump{\frac{1}{\uh}R_1\mfsw} \\ \dsp \av{\frac{1}{\uh}R_1\mfsw}+F_{\rm ext} \\\dsp  ({R}_1 \mfsw)_+ \\ \dsp  ({R}_1 \mfsw)_-  \end{pmatrix}
\end{equation}
where the matrix $\widetilde{\mathcal M}[\eps\delta,\eps\tzetapm]$ is block-triangular,
\begin{equation}\label{defMtotapp2}
\widetilde{\mathcal M}[\eps\delta,\eps\tzetapm]:=
\left(
\begin{array}{cc|cc}
\alpha(\eps\delta)+\frac{\kappa}{\ell}\av{\frac{1}{\uh}}& -\frac{\kappa}{2}\jump{\frac{1}{\uh}}   & 0 &  0\\ 
-\frac{\kappa}{2}\jump{\frac{1}{\uh}} & \tau_\kappa(\eps\delta)^2+\kappa\ell\av{\frac{1}{\uh}} & 0 & 0\\ \hline
-{\kappa} & \ell \kappa & \kappa^2 &0  \\
\kappa & \ell \kappa & 0 & \kappa^2 
\end{array}
\right),
\end{equation}
and the components $\widetilde{\mathfrak Q}_{\rm i}$, $ \widetilde{\mathfrak Q}_\delta$ and $\widetilde {\mathfrak Q}_\pm$ are given by
\begin{align*}
\nonumber
\widetilde{\mathfrak Q}_{\rm i}(\av{q_{\rm i}},&\dot\delta,\tzetapm)=- \frac{1}{4\ell}\jump{\frac{1}{\uh}\tzeta^2}\\
&+\frac{1}{4}\ell \jump{\frac{1}{\uh^2}}\dot\delta^2 +\frac{1}{4\ell}\jump{\frac{1}{\uh^2}}\av{q_{\rm i}}^2
-\big(\alpha'(\eps\delta)+\av{\frac{1}{\uh^2}}\big)\dot\delta \av{q_{\rm i}},\\
\nonumber
\widetilde{\mathfrak Q}_{\delta}(\av{q_{\rm i}},&\dot\delta,\tzetapm)= \frac{1}{2}\av{\frac{1}{\uh}\tzeta^2}\\
&+\big(\beta(\eps\delta)-\frac{1}{2}\ell^2 \av{\frac{1}{\uh^2}}\big)\dot\delta^2 
+\frac{1}{2}\big(\alpha'(\eps\delta)-\av{\frac{1}{\uh^2}}\big)\av{q_{\rm i}}^2
+\frac{1}{2}\ell \jump{\frac{1}{\uh^2}}\dot\delta \av{q_{\rm i}},
\end{align*}
and
\begin{align*}
\widetilde{\mathfrak Q}_+(\av{q_{\rm i}},\dot\delta,\tzetapm)&=
 -\frac{1}{2}\tzetap^2 - \frac{1}{\uh_+}\av{q_{\rm i}}^2 -\ell^2 \frac{1}{\uh_+}\dot\delta^2 +2\ell \frac{1}{\uh_+}\av{q_{\rm i}}\dot\delta,\\
\widetilde {\mathfrak Q}_-(\av{q_{\rm i}},\dot\delta,\tzetapm)&=
 -\frac{1}{2}\tzetam^2 - \frac{1}{\uh_-}\av{q_{\rm i}}^2 -\ell^2 \frac{1}{\uh_-}\dot\delta^2 -2\ell \frac{1}{\uh_-}\av{q_{\rm i}}\dot\delta.
\end{align*}
Since by definition of ${\mathcal G}$ one has
\begin{equation}\label{GencoupleODEapp3}
\frac{d}{dt}\big( \av{q_{\rm i}}, \dot\delta,\dtzetap, \dtzetam \big)^{\rm T}={\mathcal G}_{\rm I}(\Theta,(R_1\mfsw)_\pm,F_{\rm ext})
\end{equation}
with the notation ${\mathcal G}_{\rm I}:=\big( {\mathcal G}_1, {\mathcal G}_2,{\mathcal G}_3,{\mathcal G}_4\big)^{\rm T}$; we deduce from \eqref{GencoupleODEapp2} 
that
\begin{align*}
{\mathcal G}_{\rm I}&(\Theta,(R_1\mfsw)_\pm,F_{\rm ext})\\
&=\widetilde{\mathcal M}[\eps\delta,\eps\tzetapm]^{-1}
\Big[ -\begin{pmatrix} \dsp 0\\  \dsp \delta \\ \dsp  \tzetap  \\ \dsp   \tzetam \end{pmatrix}
+ \eps \widetilde{\boldsymbol {\mathfrak Q}}(\av{q_{\rm i}},\dot\delta,\tzetapm)
+ \begin{pmatrix}  \dsp -\frac{1}{2\ell} \jump{\frac{1}{\uh}R_1\mfsw} \\ \dsp \av{\frac{1}{\uh}R_1\mfsw} +F_{\rm ext}\\\dsp  ({R}_1 \mfsw)_+ \\ \dsp  ({R}_1 \mfsw)_-  \end{pmatrix}
\Big],
\end{align*}
while 
$$
{\mathcal G}_5(\Theta)=\dot\delta,\qquad {\mathcal G}_6(\Theta)=\dot\tzetap, \qquad  {\mathcal G}_7(\Theta)=\dot\tzetam.
$$
\begin{remark}
For the numerical computations, we use the explicit expression for the inverse of the matrix $\widetilde{\mathcal M}[\eps\delta,\eps\zetapm]$, namely, 
\begin{equation}\label{invMexpl}
\widetilde{\mathcal M}[\eps\delta,\eps\tzetapm]^{\rm -1}=
\left(
\begin{array}{cc|cc}
-\frac{4}{D} \big(\tau_\kappa(\eps\delta)^2+\kappa\ell \av{\frac{1}{\uh}}\big) &  -\frac{2\kappa}{D} \jump{\frac{1}{\uh}}     & 0 &0 \\
 -\frac{2\kappa}{D} \jump{\frac{1}{\uh}}  & - \frac{4}{D} \big(\alpha(\eps\delta)+\frac{\kappa}{\ell}\av{\frac{1}{\uh}}\big)& 0 & 0 \\ \hline
 -\frac{4}{\kappa D}\big(\tau_\kappa(\eps\delta)^2+\kappa\ell {\frac{1}{\uh_-}}\big) & \frac{4}{\kappa D}\big( \kappa {\frac{1}{\uh_-}}+\ell \alpha(\eps\delta)\big) & \frac{1}{\kappa^2} & 0 \\
\frac{4}{\kappa D}\big(\tau_\kappa(\eps\delta)^2+\kappa\ell {\frac{1}{\uh_+}}\big) &\frac{4}{\kappa D}\big( \kappa {\frac{1}{\uh_+}}+\ell \alpha(\eps\delta)\big) & 0 &  \frac{1}{\kappa^2}
\end{array}
\right)
\end{equation}
with
\begin{equation}\label{defDapp}
D=-4\big(\alpha(\eps\delta)+\frac{\kappa}{\ell}\av{\frac{1}{\uh}}\big)\times \big(\tau_\kappa(\eps\delta)^2+\kappa\ell \av{\frac{1}{\uh}}\big)+\kappa^2 \jump{\frac{1}{\uh}}^2.
\end{equation}
\end{remark}
\subsection{The case of an object fixed or in forced motion}\label{appfixforced}

When the object is fixed or in forced motion, the position of the center of mass is known and $\delta$ is therefore equal to some given function $\delta_{\rm forced}$ ($\delta_{\rm forced}\equiv 0$ if the solid is fixed). The ODE \eqref{ODEaugmapp} can be reduced to an ODE on $\RR^5$ instead of $\RR^7$. The variable $\Theta$ now stands for  $\Theta:=\big(\av{q_{\rm i}},\dot\tzetap,\dot\tzetam,\tzetap,\tzetam\big)^{\rm T}$ and \eqref{GencoupleODEapp2} can be simplified into
\begin{align}
\nonumber
\widetilde{\mathcal M}_{\rm forced}[t,\eps\tzetapm]
\frac{d}{dt}\begin{pmatrix} \dsp  \av{q_{\rm i}}  \\ \dsp \dtzetap  \\ \dsp  \dtzetam \end{pmatrix}
+ \begin{pmatrix} \dsp 0 \\ \dsp  \tzetap  \\ \dsp   \tzetam \end{pmatrix}
=& \eps \widetilde{\boldsymbol {\mathfrak Q}}_{\rm forced}[t,\eps\tzetapm](\av{q_{\rm i}},\zetapm)\\
\label{GencoupleODEapp2forced}
&
+ \begin{pmatrix}  \dsp -\frac{1}{2\ell} \jump{\frac{1}{\uh}R_1\mfsw} \\ \dsp  ({R}_1 \mfsw)_+ \\ \dsp  ({R}_1 \mfsw)_-  \end{pmatrix}
+\begin{pmatrix}
\frac{1}{2}\kappa \jump{\frac{1}{\uh}}\\
-\ell\kappa\\
-\ell\kappa
\end{pmatrix}
\ddot\delta_{\rm forced},
\end{align}
where the matrix $\widetilde{\mathcal M}_{\rm forced}[t,\eps\tzetapm]$ is block-triangular,
\begin{equation}\label{defMtotapp2forced}
\widetilde{\mathcal M}_{\rm forced}[t,\eps\tzetapm]:=
\left(
\begin{array}{cc|cc}
\alpha(\eps\delta_{\rm forced})+\frac{\kappa}{\ell}\av{\frac{1}{\uh}}  & 0 &  0\\ 
-{\kappa} & \kappa^2 &0  \\
\kappa  & 0 & \kappa^2 
\end{array}
\right),
\end{equation}
and
$$
\widetilde{\boldsymbol{\mathfrak Q}}_{\rm forced}[t,\eps\zetapm](\av{q_{\rm i}},\tzetapm):=
\begin{pmatrix}
\widetilde{\mathfrak Q}_{\rm i}[\eps\delta_{\rm forced},\eps\zetapm](\av{q_{\rm i}},\dot\delta_{\rm forced},\tzetapm)\\
\widetilde{\mathfrak Q}_{+}[\eps\delta_{\rm forced},\eps\zetapm](\av{q_{\rm i}},\dot\delta_{\rm forced},\tzetapm)\\
\widetilde{\mathfrak Q}_{-}[\eps\delta_{\rm forced},\eps\zetapm](\av{q_{\rm i}},\dot\delta_{\rm forced},\tzetapm)
\end{pmatrix},
$$
and ${\mathfrak Q}_{\rm i}$ and ${\mathfrak Q}_\pm$ as in the previous section.
\begin{remark}
We have made explicit the dependence of $\widetilde{\mathcal M}_{\rm forced}$ and $\widetilde{\boldsymbol{\mathfrak Q}}_{\rm forced}$ on the time variable $t$ because $\delta_{\rm forced}$ is now an explicit function of time, and is now an non autonomous contribution to the ODE for $\Theta$ (except of course if the object is fixed, in which case $\delta_{\rm forced}\equiv 0$).
\end{remark}
\noindent
With ${\mathcal G}_{\rm I}$ now being three dimensional (but with an extra dependence on $t$),  ${\mathcal G}_{\rm I}:=\big( {\mathcal G}_1, {\mathcal G}_2,{\mathcal G}_3\big)^{\rm T}$, we have therefore
\begin{equation}\label{GencoupleODEapp3forced}
\frac{d}{dt}\big( \av{q_{\rm i}},\dot\tzeta_+, \dot \tzeta_- \big)^{\rm T}={\mathcal G}_{\rm I}(t,\Theta,(R_1\mfsw)_\pm)
\end{equation}
and
\begin{align*}
{\mathcal G}_{\rm I}&(t,\Theta,(R_1\mfsw)_\pm)
=\widetilde{\mathcal M}_{\rm forced}[t,\eps\tzetapm]^{-1}\\
\times&\Big[ -\begin{pmatrix} \dsp 0 \\ \dsp  \tzetap  \\ \dsp   \tzetam \end{pmatrix}
+ \eps \widetilde{\boldsymbol {\mathfrak Q}}_{\rm forced}[t,\eps\tzetapm](\av{q_{\rm i}},\tzetapm)
+ \begin{pmatrix}  \dsp -\frac{1}{2\ell} \jump{\frac{1}{\uh}R_1\mfsw}  \\ \dsp  ({R}_1 \mfsw)_+ \\ \dsp  ({R}_1 \mfsw)_-  \end{pmatrix}
+\begin{pmatrix}
\frac{1}{2}\kappa \jump{\frac{1}{\uh}}\\
-\ell\kappa\\
-\ell\kappa
\end{pmatrix}
\ddot\delta_{\rm forced}
\Big],
\end{align*}
while ${\mathcal G}_4(\Theta)=\dot\tzetap$,  ${\mathcal G}_5(\Theta)=\dot\tzetam$ and 
$$
\widetilde{M}_{\rm forced}[t,\eps\tzetapm]^{-1}=\begin{pmatrix}
\dsp \frac{1}{\alpha+\frac{\kappa}{\ell}\av{\frac{1}{\uh}}} & 0 & 0\\
\dsp \frac{1}{\kappa} \frac{1}{\alpha+\frac{\kappa}{\ell}\av{\frac{1}{\uh}}} & \frac{1}{\kappa^2} &0\\
\dsp -\frac{1}{\kappa} \frac{1}{\alpha+\frac{\kappa}{\ell}\av{\frac{1}{\uh}}} & 0 &\frac{1}{\kappa^2}
\end{pmatrix}.
$$
\begin{remark}\label{remcontrol}
The second component of \eqref{GencoupleODEapp3} (evolution equation on $\delta$) does not appear any longer in \eqref{GencoupleODEapp3forced} but remains of course valid. It can be used to answer the following control problem: what external force should we apply to the object so that the vertical displacement of its center of gravity coincides with $\delta_{\rm forced}$? The answer is explicitly given (writng ${\mathfrak Q}_{\delta}={\mathfrak Q}_{\delta}[\eps\delta_{\rm forced},\eps\zetapm]$, etc.)
\begin{align*}
F_{\rm ext}=&\delta-\av{\frac{1}{h}R_1\mfsw}-\eps{\mathfrak Q}_{\delta}(\av{q_{\rm i}},\dot\delta_{\rm forced},\zetapm)\\
&-\frac{1}{4}\frac{D_{\rm forced}}{\alpha(\eps\delta_{\rm forced})+\frac{\kappa}{\ell}\av{\frac{1}{h}}}\ddot\delta_{\rm forced}\\
&-\frac{1}{2}\frac{\kappa}{\alpha(\eps\delta_{\rm forced})+\frac{\kappa}{\ell}\av{\frac{1}{h}}}\jump{\frac{1}{h}}\Big[ \eps {\mathfrak Q}_{\rm i}(\av{q_{\rm i}},\dot\delta_{\rm forced},\zetapm)-\frac{1}{2\ell}\jump{\frac{1}{h}R_1\mfsw}\Big],
\end{align*}
where $D_{\rm forced}$ is deduced from the expression given for $D$ in \eqref{defDapp} by substituting $\delta_{\rm forced}$ to $\delta$.
\end{remark}

\subsection{Simplifications in the symmetric case}\label{simplsym}

When the object is symmetric with respect to the vertical axis $\{x=0\}$ (i.e. if $h_{\rm eq}$ is an even function), as assumed throughout this article, it is possible to consider symmetric flows for which $\zeta$ is an even function, $q$ is odd, and $\av{q_{\rm i}}\equiv 0$  (such conditions are propagated by the equations from the initial data). This is for instance the case for waves generated by a floating object in a fluid initially at rest. By symmetry, the augmented transmission problem \eqref{transm1augm}-\eqref{ODEaugm} reduces to an augmented initial boundary value problem on the half-line $\cE^+=(\ell,\infty)$, 
\begin{equation}\label{transm1augmsym}
\begin{cases}
\dt \zeta+\dx q=0,\\
(1-\kappa^2\dx^2)\dt q +\eps \dx \big( \frac{1}{h}q^2\big)+h\dx\zeta=0
\end{cases}
\quad\mbox{ for }\quad t>0, \quad x\in (\ell,\infty)
\end{equation}
with boundary condition
\begin{equation}\label{transm2augmsym}
q_{\vert_{x=\ell}}=-\ell \dot\delta,
\end{equation}
where  $\delta$ is a  function of time determined by the first order ODE 
\begin{equation}\label{ODEaugmsym}
\frac{d}{dt}\Theta={\mathcal G}\big(\Theta, (R_1\mfsw)_+,F_{\rm ext}\big),
\end{equation}
with $\Theta:=\big(\dot\delta,\dot\tzetap,\delta,\tzetap\big)^{\rm T}$ and where ${\mathcal G}=({\mathcal G}_1,{\mathcal G}_2,{\mathcal G}_3,{\mathcal G}_4)^{\rm T}$ is given by
\begin{align*}
\begin{pmatrix}
{\mathcal G}_1\\
{\mathcal G}_2
\end{pmatrix}
=
{\mathcal M}_{\rm sym}[\eps\delta,\eps\zetap]^{-1}
\Big[&-\begin{pmatrix} \delta \\ \zetap \end{pmatrix}+\eps \begin{pmatrix} {\mathfrak Q}_\delta^{\rm sym}[\eps\delta,\eps\zetap](\dot\delta,\zetap) \\
{\mathfrak Q}_+^{\rm sym}[\eps\delta,\eps\zetap](\dot\delta,\zetap)
\end{pmatrix}\\
&+
\begin{pmatrix}
\frac{1}{h_+}(R_1\mfsw)_++F_{\rm ext}\\
(R_1\mfsw)_+
\end{pmatrix}\Big]
\end{align*}
and ${\mathcal G}_3=\dot\delta$, ${\mathcal G}_4=\dot\zetap$, and with
$$
{\mathcal M}_{\rm sym}[\eps\delta,\eps\zetap]=
\begin{pmatrix}
\tau_\kappa(\eps\delta)^2+\kappa\ell{\frac{1}{h_+}} & 0 \\
\ell\kappa & \kappa^2
\end{pmatrix},
$$
while ${\mathfrak Q}_\delta^{\rm sym}$ and ${\mathfrak Q}_+^{\rm sym}$ are obtained by replacing $\av{q_{\rm i}}=0$ and $\zetam=\zetap$ in the formula derived above for ${\mathfrak Q}_\delta$ and ${\mathfrak Q}_\delta$.

Two particular physical situations of particular interest fit into the symmetric framework and are investigated in this paper:
\begin{itemize}
\item The return to equilibrium. An object is released from an out of equilibrium position in a fluid initially at rest. This situation is described by \eqref{transm1augmsym}-\eqref{ODEaugmsym} with $F_{\rm ext}=0$ and  initial conditions $(\zeta,q)_{\vert_{t=0}}=(0,0)$ and $(\dot\delta,\dot\zeta_+,\delta,\zeta_+)_{\vert_{t=0}}=(0,0,\delta^{\rm in},0)$.
\item Wave generation. Waves are generated in a fluid initially at rest by moving the object up and down with a prescribed motion $\delta_{\rm forced}$. The problem then reduces to an initial boundary value problem with boundary condition on $q$, namely, $q_{\vert_{x=\ell}}=g$, with $g=-\ell\dot\delta_{\rm forced}$. This boundary data is explicitly given and does not require the resolution of a first order ODE as the other problems considered here. 
\end{itemize}


\begin{thebibliography}{10}
\bibitem{talbot_euler} \textsc{J. Abate, W. Ward}, {\it A Unified Framework for Numerically  Inverting Laplace Transforms}, INFORMS Journal of Computing, vol. 18.4 
(2006), 408-421
\bibitem{ABZ} \textsc{T. Alazard, N. Burq, C. Zuily}, {\it Low regularity Cauchy theory for the water-waves problem: canals and swimming pools}, Journées équations aux dérivées partielles (2011), 1-20.
\bibitem{Babarit} \textsc{A. Babarit}, {\it Ocean wave energy conversion: resource, technologies and performance} (2017). Elsevier.
\bibitem{BeckLannes} \textsc{G. Beck, D. Lannes}, {\it Freely floating objects on a fluid governed by the Boussinesq equations}, Ann. Inst. H. Poincar\'e Anal. Non Lin\'eaire (2022), published online first, DOI 10.4171/AIHPC/15.
\bibitem{BianchiniPerrin} \textsc{R. Bianchini, C. Perrin}, {\it Soft congestion approximation to the one-dimensional constrained Euler equations},
Nonlinearity, to appear.
\bibitem{BEER} \textsc{U. Bosi, A. P. Engsig-Karup, C. Eskilsson, M. Ricchiuto},.{\it  A spectral/hp element depth-integrated model for nonlinear wave–body interaction}, Computer Methods in Applied Mechanics and Engineering {\bf 348}, 222-249.
\bibitem{Bocchi} \textsc{E. Bocchi}, {\it Floating structures in shallow water: local well-posedness in the axisymmetric case}, SIAM Journal on Mathematical Analysis {\bf 52} (2020), 306-339.
\bibitem{Bocchi2} \textsc{E. Bocchi}, {\it On the return to equilibrium problem for axisymmetric floating structures in shallow water}, Nonlinearity {\bf 33} (2020), 3594.
\bibitem{Bocchi3} \textsc{E. Bocchi, J. He, G. Vergara-Hermosilla}, {\it Modelling and simulation of a wave energy converter}, ESAIM: Proceedings and Surveys 70, 68-83
\bibitem{BLM} \textsc{D. Bresch, D. Lannes, G. M\'etivier}, {\it Waves interacting with a partially immersed obstacle in the Boussinesq regime}, Analysis \& PDE, to appear.

\bibitem{Cummins} \textsc{W. Cummins}, {\it The Impulse Response Function and Ship Motions}, Report (David W. Taylor Model Basin), Navy Department, David Taylor Model Basin, 1962.

\bibitem{DalibardPerrin} \textsc{A.-L. Dalibard, C. Perrin},
{\it Partially congested propagation fronts in one-dimensional Navier-Stokes equations},
Proceeding, Journal of Elliptic and Parabolic Equations, to appear.


\bibitem{EL} \textsc{A. P. Engsig‐Karup, W. L. Laskowski}, {\it An efficient p‐multigrid spectral element model for fully nonlinear water waves and fixed bodies}, International Journal for Numerical Methods in Fluids {\bf 93} (2021), 2823-2841.

\bibitem{Uhaina} \textsc{A. Filippini, S. de Brye, V. Perrier, F. Marche, M. Ricchiuto, D. Lannes, P. Bonneton},   {\it UHAINA : A parallel high performance unstructured near-shore wave model}, Journées Nationales Génie Côtier – Génie Civil, May 2018, La Rochelle, France. pp.47-56, 
\bibitem{GPSW1} \textsc{E. Godlewski, M. Parisot, J. Sainte-Marie, F. Wahl}, {\it Congested shallow water model: roof modeling in free surface flow}, ESAIM: Mathematical Modelling and Numerical Analysis {\bf 52} (2018), 1679-1707.
\bibitem{GPSW2} \textsc{E. Godlewski, M. Parisot, J. Sainte-Marie, F. Wahl}, {\it Congested shallow water model: on floating body}, SMAI Journal of Computational Mathematics, 2021.
\bibitem{GKC} \textsc{O. I. Gusev, G. S. Khakimzyanov, L. B. Chubarov}, {\it Numerical investigation of the wave force on a partially immersed rectangular structure: Long waves over a flat bottom}, Ocean Engineering {\bf 221} (2021), 108540.
\bibitem{Haidar} \textsc{A. Haidar, F. Marche, F. Vilar}, {\it A robust DG-ALE formulation for nonlinear shallow water interactions with a partially immersed object}, hal-03549725v1.
\bibitem{IguchiLannes} \textsc{T. Iguchi, D. Lannes}, {\it Hyperbolic free boundary problems and applications to wave-structure interactions.}, Indiana University Mathematics Journal {\bf 70} (2021), 353-464.
\bibitem{John1} \textsc{F. John}, {\it On the motion of floating bodies. I}, Communications on Pure and Applied Mathematics {\bf 2} (1949), 13-57.
\bibitem{Karambas} \textsc{T. Karambas, E. Loukogeorgaki}, {\it A Boussinesq-type model for nonlinear wave-heaving cylinder interaction}, Energies {\bf }15 (2022), 469.
\bibitem{LannesModeling} \textsc{D. Lannes}, {\it Modeling shallow water waves}, Nonlinearity {\bf 33} (2020), R1.
\bibitem{LannesBressanone} \textsc{D. Lannes} {\it Initial boundary value problems for hyperbolic systems, and dispersive perturbations}, Lecture notes of the Bressanone Winter School, to appear.
\bibitem{Lannes_float} \textsc{D. Lannes}, {\it On the dynamics of floating structures}, Annals of PDE {\bf 3} (2017).
\bibitem{LannesMetivier} \textsc{D. Lannes, G. Métivier}, {\it The shoreline problem for the one-dimensional shallow water and Green- Naghdi equations}, J. Ec. Polytech. Math. {\bf 5} (2018), 455-518.
\bibitem{LannesWeynans} \textsc{D. Lannes, L. Weynans}, {\it Generating boundary conditions for a Boussinesq system}, Nonlinearity {\bf 33} (2020), 6868.
\bibitem{Leveque} \textsc{R.J. LeVeque}, Numerical Methods for Conservation Laws, Birkhauser-V erlag (1990).
\bibitem{Maity} \textsc{D Maity, J San Martín, T Takahashi, M Tucsnak}, {\it  Analysis of a simplified model of rigid structure floating in a viscous fluid},
Journal of Nonlinear Science {\bf 29} (2019), 1975-2020.
\bibitem{MingWang} \textsc{M. Ming, C. Wang},  {\it Water‐Waves Problem with Surface Tension in a Corner Domain II: The Local Well‐Posedness}, Communications on Pure and Applied Mathematics {\bf 74} (2021), 225-285.
\bibitem{MIS} \textsc{S. C. Mohapatra, H. Islam, C. Guedes Soares}, {\it Boussinesq model and CFD simulations of non-linear wave diffraction by a floating vertical cylinder}, Journal of Marine Science and Engineering {\bf 8} (2020)
\bibitem{Parisot} \textsc{M. Parisot} \textsc{Congested shallow water model: trapped air pockets modeling}, hal-03748169.
\bibitem{PGR} \textsc{M. Penalba, G. Giorgi, J. Ringwood},  {\it Mathematical modelling of wave energy converters: A review of nonlinear approaches}, Renew. Sustain. Ener. Rev. {\bf 78} (2017), 1188-1207.
\bibitem{PerrinSaleh} \textsc{C. Perrin, K. Saleh},  {\it Numerical staggered schemes for the free-congested Navier-Stokes equations},
SIAM Journal of Numerical Analysis, to appear.
\bibitem{Poyferre} \textsc{T. de Poyferré},  {\it A priori estimates for water waves with emerging bottom}, Archive for Rational Mechanics and Analysis {\bf 232} (2019), 763-812.
\bibitem{Funwave} \textsc{F. Shi, J. T. Kirby, J. C. Harris, J. D. Geiman, S. T. Grilli}, {\it A high-order adaptive time-stepping tvd solver for Boussinesq modeling of breaking waves and coastal inundation}, Ocean Modelling {\bf 43-44} (2012), 36–51.
\bibitem{SuTucsnak} \textsc{P. Su, M. Tucsnak}, {\it Shallow water waves generated by a floating object: a control theoretical perspective}, Mathematical Control and Related Fields {\bf 16}  (2021).
\bibitem{WeiKirbySinha} \textsc{G. Wei G, J. Kirby, A. Sinha},  {\it Generation of waves in Boussinesq models using a source
function method}, Coast. Eng. {\bf 36} (1999), 271-99.

\end{thebibliography}
\end{document}